\newcommand*{\citet}[1]{\AtNextCite{\AtEachCitekey{\defcounter{maxnames}{2}}} \textcite{#1}}
\newcommand*{\citep}[1]{\cite{#1}}
\theoremstyle{plain}
\newtheorem{theorem}{Theorem}
\newtheorem*{theorem*}{Theorem}
\newtheorem{lemma}{Lemma}
\newtheorem{definition}{Definition}
\newtheorem{corollary}{Corollary}
\newtheorem{remark}{Remark}
\newtheorem{claim}{Claim}
\newtheorem{informallemma}{Informal Lemma}
\newcommand{\J}{\mathcal{J}}
\newcommand{\R}{\mathbb{R}}
\newcommand{\E}{\mathbb{E}}
\newcommand{\calX}{\mathcal{X}}
\renewcommand{\phi}{\varphi}
\renewcommand{\tilde}{\widetilde}
\newcommand{\poly}{\mathrm{poly}}
\newcommand{\yuval}[1]{\textcolor{red}{ Yuval: {#1}}}
\newcommand{\vardis}[1]{\textcolor{blue}{ Vardis: {#1}}}
\newlength{\commentindent}
\renewcommand{\algorithmiccomment}[1]{\unskip\hfill\makebox[\commentindent][l]{//~#1}\par}
\LetLtxMacro{\oldalgorithmic}{\algorithmic}
\renewcommand{\algorithmic}[1][0]{%
  \oldalgorithmic[#1]%
  \renewcommand{\ALC@com}[1]{%
    \ifnum\pdfstrcmp{##1}{default}=0\else\algorithmiccomment{##1}\fi}%
}
\newcommand{\lp}{\left}
\newcommand{\rp}{\right}
\DeclareMathOperator{\sech}{sech}
\DeclareMathOperator{\Var}{Var}
\DeclareMathOperator{\argmax}{argmax}
\newcommand{\norm}[1]{\left\lVert #1 \right\rVert}
\title{Learning Ising models from one or multiple samples}
\author{
    Yuval Dagan 
    \\EECS \& CSAIL, MIT\\
    \tt{dagan@mit.edu}
    \and
	Constantinos Daskalakis
	\\EECS \& CSAIL, MIT\\
	\tt{costis@csail.mit.edu}
	\and
	Nishanth Dikkala
	\\GOOGLE RESEARCH \& MIT\\
	\tt{nishanthd@google.com}\\
	\and
	Anthimos Vardis Kandiros
	\\EECS \& CSAIL, MIT\\
	\tt{kandiros@mit.edu}
	}
\date{\vspace{-5ex}}
\begin{document}

\maketitle
\thispagestyle{empty}


\begin{abstract}

There have been two separate lines of work on estimating Ising models: (1) estimating them from multiple independent samples under minimal assumptions about the model's interaction matrix \cite{bresler2015efficiently,vuffray2016interaction,klivans2017learning,hamilton2017information,wu2019sparse}; and (2) estimating them from one sample in restrictive settings \cite{chatterjee2007estimation,bhattacharya2018inference,ghosal2018joint,daskalakis2019regression}. We propose a unified framework that smoothly interpolates between these two settings, enabling significantly richer estimation guarantees from one, a few, or many samples.

Our main theorem provides guarantees for one-sample estimation, quantifying the estimation error in terms of the metric entropy of a family of interaction matrices. As corollaries of our main theorem, we derive bounds when the model's interaction matrix is a (sparse) linear combination of known matrices, or it belongs to a finite set, or to a high-dimensional manifold. In fact, our main result handles multiple independent samples by viewing them as one sample from a larger model, and can be used to derive estimation bounds that are qualitatively similar to those obtained in the afore-described multiple-sample literature.
Our technical approach benefits from sparsifying a model's interaction network, conditioning on subsets of variables that make the dependencies in the resulting conditional distribution sufficiently weak.
We use this sparsification technique to prove strong concentration and anti-concentration results for the Ising model,
which we believe have applications beyond the scope of this paper.

\end{abstract}

\newpage
\setcounter{page}{1}

\section{Introduction}




Markov Random Fields (MRFs) are a popular framework for representing high-dimensional distributions with conditional independence structure, represented via an undirected graph~\cite{Lauritzen96,WainwrightJ08}. The explicit representation of conditional independences allows for a more succinct representation of a distribution, decreasing the computational requirements to do inference. A special case of MRFs studied in this paper is the celebrated {\em Ising model}~\cite{ising1925beitrag}, which samples a binary vector, $x = (x_1,\ldots,x_n) \in \{\pm 1 \}^n$, according to a measure of the following form:
\begin{align}
\label{eq:ising}
    &\Pr_{J^*}[x] = \exp\left(x^{\top}J^*x/2 - F(J^*) - n\log 2 \right),
\end{align}
where $J^*$ is an $n \times n$ symmetric matrix with zero diagonal and $F(J^*) = \log \left( 2^{-n}\sum_{x} \exp(x^{\top}J^*x/2 )\right)$ is the so-called log-partition function. Notice that the term $J^*_{ij} x_i x_j$ in the exponent of the density encourages $x_i$ and $x_j$ to have equal or opposite signs depending on the sign and magnitude of $J^*_{ij}$, but this ``local encouragement'' can be overwritten by indirect interactions arising through paths between $i$ and $j$ in the undirected graph defined by the non-zero entries of $J^*$. Whenever $i$ and $j$ are disconnected in this graph, $x_i$ and $x_j$ are independent.

Since its introduction, the Ising model has found profound applications in a range of disciplines, including Statistical Physics, Computer Vision, Computational Biology, and the Social Sciences; see e.g.~\cite{GemanG86,Ellison93,Felsenstein04,chatterjee2005concentration,DaskalakisMR11,daskalakis2017concentration}. 
These applications have motivated a long line of research aiming at estimating Ising models using samples. Some exciting progress on this front has appeared in recent years, including~\cite{santhanam2012information,ravikumar2010high,bresler2015efficiently,vuffray2016interaction,klivans2017learning,hamilton2017information,wu2019sparse}. Importantly, most prior work assumes access to {\em multiple independent samples}, targeting estimating the interaction matrix $J^*$ of a model under some conditions on $J^*$.
Instead our focus in this work is to estimate Ising models from a {\em single} sample, which as we will shortly explain is  a {\em more general problem}:

\vspace{5pt}\noindent ~~\framebox{\begin{minipage}[h]{16cm} {\bf Single-Sample Ising Model Estimation:} {Given a family of interaction matrices ${\cal J} \subseteq \mathbb{R}^{n\times n}$ and one sample $X$ from \eqref{eq:ising}, where $J^* \in {\cal J}$, compute an estimate $\hat{J}(X)$ to minimize $\|\hat{J}(X)-J^* \|_F$.}
\end{minipage}
}

\medskip Notice that estimating Ising models from one sample generalizes estimating them from multiple samples. This is because $\ell$ independent samples from an $n$-node Ising model with interaction matrix $J^*$ can be viewed as one sample from an Ising model with $n\ell$ nodes, which belong to $\ell$ disconnected subnetworks that each have interaction matrix $J^*$.


Moreover, single-sample estimation is motivated by many applications where we may realistically only collect a single independent sample from a distribution. E.g., in applications of the Ising model in social network analysis, a sample from the model represents some binary behavior of the nodes in a social network, such as using an Android phone or an iPhone, voting for Democrats or Republicans, etc. In such applications, if we take a snapshot of the nodes' behaviors tomorrow, chances are that very little would  change compared to their behavior today, and we certainly would not collect an independent sample. More broadly, lack of access to independent samples is ubiquitous in financial, meteorological, and geographical data, as well as social-network data~\cite{manski1993identification,bramoulle2009identification}, where it has been studied in topics as diverse as criminal activity~\cite{glaeser1996crime}, welfare participation~\cite{bertrand2000network}, school achievement~\cite{sacerdote2001peer},  retirement plan participation~\cite{duflo2003role}, and obesity~\cite{christakis2013social,trogdon2008peer}. Moreover, it has motivated a growing literature on single-sample statistical estimation, including~\cite{besag1974spatial,yu1994rates,chatterjee2007estimation,kontorovich2008concentration,berti2009rate,mohri2009rademacher,pestov2010predictive,mohri2010stability,london2013collective,kuznetsov2015learning,london2016stability,McDonald2017,kontorovich2017concentration,bhattacharya2018inference,ghosal2018joint,bresler2018optimal,daskalakis2019regression,dagan2019learning,valiant2019worstcase}. 

Of course, one sample from~\eqref{eq:ising} only carries $n$ bits, while the matrix $J^*$ to be estimated has $\Omega(n^2)$ real entries. Thus, one cannot hope to estimate $J^*$ well from one sample without placing  constraints on $J^*$. Said differently, the error in estimating $J^*$ from one sample should depend on how complex $J^*$ might be. This is the role played by $\cal J$ in the definition of our estimation problem. Our main result, presented shortly as Theorem~\ref{thm:informal-main}, is that there exists an estimator whose error depends on the metric entropy of $\cal J$. Instantiating $\cal J$ in different ways, we obtain strong estimation guarantees when: (i) $\cal J$ is finite; (ii) it contains linear combinations of known matrices; (iii) it contains sparse linear combinations of known matrices; and (iv) it is a high-dimensional manifold. These are respectively Corollaries~\ref{cor:finite}, \ref{cor:linear}, \ref{cor:sparse}, and \ref{cor:manifold}. 

Prior to our work, the single-sample Ising model estimation literature had only studied quite restrictive special cases of our problem, namely the case $J^*=\beta J$, where $J$ is a known matrix, and $\beta$ is an unknown scalar strength parameter~\cite{chatterjee2007estimation,bhattacharya2018inference}, or slightly more general cases studied by follow-up work~\cite{bhattacharya2018inference,ghosal2018joint,daskalakis2019regression}.
Restricted to this special case, our bounds provide quantitative improvements in the estimation error, as discussed in Section~\ref{sec:intro-one-param}. However, our general theorem, as well as its corollaries in Settings (i)--(iv) discussed in the previous paragraph, provide vast extensions. E.g.~(ii) and (iii) capture settings wherein we might know various social networks that individuals belong to (Facebook, LinkedIn, etc.) and expect that these all contribute to their behavior at different strengths. Setting (iv) captures settings of interest to Spatial Econometrics~\cite{anselin2001spatial,lesage2008introduction,anselin2012new,anselin2013spatial} wherein we might be able to postulate a functional form for the interaction matrix and might be interested in estimating its parameters. 

On the other hand, multiple-sample Ising model estimation is a widely studied problem with a long literature, going back to at least 
~\cite{chow1968approximating}. Yet, an efficient algorithm that learns Ising models on general (bounded-degree) graphs was only recently given in breakthrough work by~\cite{bresler2015efficiently}, which has incited a renaissance of work on this topic~\cite{vuffray2016interaction,klivans2017learning,hamilton2017information,wu2019sparse}.
Since single-sample estimation generalizes multiple-sample estimation, as we have already discussed, our results for single-sample estimation allow us to obtain reconstruction guarantees for the following problem for any value of $\ell$:

\vspace{5pt}\noindent ~~\framebox{\begin{minipage}[h]{16cm} {\bf $\ell$-Sample Ising Model Estimation:} {Given a family of interaction matrices ${\cal J} \subseteq \mathbb{R}^{n\times n}$ and $\ell$ independent samples from \eqref{eq:ising}, where $J^* \in {\cal J}$, compute an estimate $\hat{J}$ to minimize $\|\hat{J}-J^* \|_F$.}
\end{minipage}
}

\medskip \noindent Corollary~\ref{cor:multiple-samples} of Theorem~\ref{thm:informal-main} quantifies that access to multiple samples typically decreases the reconstruction error by a factor of $\widetilde{\Omega}(\sqrt{\ell})$. As such, we get reconstruction guarantees which smoothly interpolate between the single-sample estimation setting considered by~\cite{chatterjee2007estimation,bhattacharya2018inference,ghosal2018joint,daskalakis2019regression} and the more common $\omega(1)$-sample estimation setting considered by~\cite{bresler2015efficiently,vuffray2016interaction,klivans2017learning,hamilton2017information,wu2019sparse}. Interestingly, instantiating our result to the latter setting we obtain guarantees which are competitive to that work, as shown in Corollary~\ref{cor:learn-no-assuption} and the middle row of Table~\ref{tbl:results}. 
Our sample complexity is typically higher, 
yet we derive it as a corollary of our main theorem which does not utilize independence between the samples. {This further enables us to obtain similar bounds given two or more \emph{dependent} samples as demonstrated by Corollary~\ref{cor:weakly-dep}. See Table~\ref{tbl:results} for a summary of our results together with a comparison to prior work on estimation from a single and multiple samples.} 

 \begin{table}
 \def\arraystretch{1}
\begin{tabular}{ |c| c| c| }
\hline
 \bf{Family $\J$ of matrices} & \bf{Single sample} & $\ell$ \bf{samples} \\
 \hline
 Arbitrary family $\J$  *
 & $\sqrt{f(\J,1/n)}$ (Theorem~\ref{thm:informal-main}) & $\sqrt{f(\J,1/n\ell) \over \ell}$ (Corollary~\ref{cor:multiple-samples}) \\[1ex]
 Finite $\J$ & $\sqrt{\log |\J|}$ (Corollary~\ref{cor:finite}) & $\sqrt{\log |\J|/\ell}$ (Cor \ref{cor:finite} \& \ref{cor:multiple-samples})
 \\[1ex]
 Linear combination of $k$ known matrices &  $\sqrt{k}$ (Corollary~\ref{cor:linear}) & $\sqrt{k/\ell}$ (Cor \ref{cor:linear} \& \ref{cor:multiple-samples})
 \\[1ex]
 \shortstack{$s$-sparse linear combination\\ of $k$ known matrices} & $\sqrt{s\log k}$ (Corollary~\ref{cor:sparse}) & $\sqrt{s\log k/\ell}$ (Cor \ref{cor:sparse} \& \ref{cor:multiple-samples})
 \\
 \hline
 All matrices (unconstrained) & impossible & \shortstack{$n\sqrt{\log(n\ell)/\ell}$ (Corollary~\ref{cor:learn-no-assuption})\\ $\sqrt{n}(\log n/\ell)^{1/4}$ \cite{klivans2017learning}} \\
 \hline
 Scalar multiples of a known matrix ** & \shortstack{$1/\sqrt{F(J^*)}$ (Corollary \ref{cor:one-param-informal})\\ $1/\sqrt{F(J^*)}$ (under additional\\ assumptions) \cite{chatterjee2007estimation,bhattacharya2018inference}} & \shortstack{$1/\sqrt{\ell F(J^*)}$\\(follows from our \\one-sample result)} \\
 \hline
\end{tabular}
\caption{We state the estimation error $||\hat{J}-J^*||_F$ obtained by our work and prior work in different settings, ignoring some logarithmic factors. We present bounds under the standard assumption that $\|J^*\|_{\infty}$ is bounded by some constant $M$. Under this assumption, since $\|J\|_F \le \sqrt{n}\|J\|_\infty \le M\sqrt{n}$, a rate smaller than $M\sqrt{n}$ is non-trivial ; see Definition~\ref{def:covering}/Theorem~\ref{thm:informal-main}.\\
* In the first row, $f(\J,\epsilon)=\log N(\mathcal{J}, \|\cdot\|_2, \epsilon)$ is the metric entropy of family $\cal J$ under $\|\cdot\|_2$.\\
** In the last row, we consider the setting $\J = \{\beta J \colon |\beta| = O(1)\}$, where $J$ is fixed and the estimation error is with respect to the parameter $\beta$. Here, $F(J^*)$ is the log-partition function, defined earlier.}  \label{tbl:results}
\end{table}

\section{Our Results} \label{sec:intro-results}

\subsection{A general upper bound}

In this section, we present a general upper bound that is a function of the covering numbers of the set $\mathcal{J}$, which represents the smallest number of elements from $\mathcal{J}$ that can approximate all elements of  set $\mathcal{J}$. We begin with a definition.

\begin{definition}\label{def:covering}
	Given a normed space $(\mathcal{X},\|\cdot\|)$, a set $\mathcal{V}\subseteq \calX$ and $\epsilon > 0$, we say that a set $\mathcal{N} \subseteq \mathcal{V}$ is an $\epsilon$-cover of $\mathcal{V}$ if for any $v \in \mathcal{V}$ there exists $u \in \mathcal{N}$ such that $\|u-v\|\le \epsilon$.
	The $\epsilon$-covering number of $\mathcal{V}$ with respect to the norm $\|\cdot\|$, denoted by $N(\mathcal{V}, \|\cdot\|,\epsilon)$, is the minimum cardinality of an $\epsilon$-cover.
\end{definition}

Our main result is stated below. As is standard in prior work, we parametrize our error in terms of a bound $M$ on the infinity norm of the interaction matrices, $\|J\|_\infty = \max_i \sum_j |J_{ij}|$, which is called ``width'' in~\cite{klivans2017learning} and relaxes placing a bound on the maximum degree~\cite{bresler2015efficiently,vuffray2016interaction}. As shown in prior work~\cite{santhanam2012information}, our single exponential dependence on $M$ is necessary.\footnote{While \cite{santhanam2012information} provide a lower bound for multiple-sample estimation, their lower bound applies to our case as well because as we have explained single-sample estimation is more general than multiple-sample.}



\begin{theorem}[Follows from Theorem~\ref{t:general}] \label{thm:informal-main}
Let $M>0$ and let $\mathcal{J} \subseteq \{J \colon \|J\|_\infty \le M \}$ denote a collection of interaction matrices.
There is an algorithm which, given a single sample $x \sim \Pr_{J^*}$ where $J^* \in \mathcal{J}$, outputs $\hat{J}$ such that with probability $\ge 1-\delta$:
\begin{align*}
\|\hat{J}-J^* \|_F 
&\le C(M) \sqrt{\log N(\mathcal{J}, \|\cdot\|_2, 1/n) + \log (1/\delta)+\log\log n},
\end{align*}
where $C(M)$ is an (single) exponential function of $M$ and $\|\cdot\|_2$ denotes the spectral norm on matrices. Moreover, $\hat{J}$ is the minimizer over ${\cal J}$ of a convex function on the space of matrices, $\mathbb{R}^{n\times n}$. It can be computed in polynomial time if ${\cal J}$ is convex and projection onto ${\cal J}$ is efficiently computable.
\end{theorem}

Theorem~\ref{thm:informal-main} guarantees that we can find a matrix $\hat{J}$ that is  close to the true interaction matrix $J^*$ in Frobenius norm. In this general formulation, the error depends on the covering numbers of the set $\mathcal{J}$. 
In many interesting scenarios, the $\epsilon$-cover of $\mathcal{J}$ will have size of the order of $(1/\epsilon)^k$, where $k$ is a notion of dimension that is specific to each case. By applying Theorem~\ref{thm:informal-main}, we obtain an error of the order of $\sqrt{k\log n}$ for constant $M$. If $k$ is significantly less than $n$, this is a non-trivial bound, since both matrices $\hat{J},J^*$
can have a Frobenius norm as high as $\Omega(\sqrt{n})$. 
We present examples where this is the case in the next section.

\begin{remark}[Tightness of the bound]
	It is reasonable to expect that Theorem~\ref{thm:informal-main} is not completely tight. Tight upper bounds based on covering numbers are usually proved via the technique of chaining. However, technical difficulties arise once one tries to apply it in our scenario. Still, in all examples presented in the next section, this technique could remove only logarithmic factors, as our near-tight lower bounds provided in Section~\ref{sec:lower} establish.
\end{remark}


\subsection{Applications of the upper bound}
To showcase the power of Theorem~\ref{thm:informal-main}, we now apply it to some concrete families $\mathcal{J}$. The families we consider capture both single-sample and multiple-sample Ising model estimation problems, in Sections~\ref{sec:single sample examples} and~\ref{sec:expose multiple samples} respectively.
In all cases, we parametrize our bounds in terms of a bound $M$ on the infinity norm of the matrices in $\J$ and a function $C(M)$ of which appears in our estimation error, as in Theorem~\ref{thm:informal-main}. 
The detailed statements and proofs are provided in Section~\ref{s:applications}.

\subsubsection{Estimation from a single sample} \label{sec:single sample examples}
The simplest case is when $\mathcal{J}$ is finite. Then, $N(\mathcal{J},\|\cdot\|_2,\epsilon) \le |\mathcal{J}|$ for all $\epsilon\ge 0$ and we have:
\begin{corollary}\label{cor:finite}
	If $\mathcal{J}$ is finite and all its elements $J$ satisfy $\|J\|_{\infty}\le M$, our estimator satisfies $\|\hat{J}-J^*\|_F \le C(M) \sqrt{\log |\mathcal{J}| + \log(1/\delta)+\log\log n}$, with probability $\ge 1-\delta$. Moreover, $\hat{J}$ can be computed in time $\poly(|\J|,n)$ (i.e. polynomial time in $|\J|$ and $n$).
\end{corollary}

Next, we consider settings where $J^*$ is a linear combination of $k$ known matrices, with unknown coefficients. 
\begin{restatable}{corollary}{linsubspace}\label{cor:linear}
	Let $J_1,\dots,J_k$ be fixed matrices and let $\J = \{ J = \sum_{i=1}^k \beta_i J_i \colon \|J\|_\infty \le M, \vec{\beta}\in\mathbb{R}^k \}$. Then, our estimator $\hat{J}$ satisfies
	$
	\|\hat{J}-J^*\|_F \le C(M)\sqrt{k\log n + \log(1/\delta)}$, with probability~$\ge 1-\delta$,
	and $\hat{J}$ can be computed in time $\poly(n,k)$.
\end{restatable}
This can be extended to when $J^*$ is a $s$-sparse linear combination of $k$ known matrices, which enables us to obtain a bound with only a logarithmic dependence on $k$. For any $\vec{\beta} \in \mathbb{R}^k$ denote by $\|\vec{\beta}\|_0$ the number of nonzero coordinates of $\vec{\beta}$. The result is given below.
\begin{restatable}
{corollary}{sparse}\label{cor:sparse}
	Let $J_1,\dots,J_k$ be fixed matrices, $s > 0$, and let $\J = \{ J = \sum_{i=1}^k \beta_i J_i \colon \|J\|_\infty \le M, \|\vec{\beta}\|_0 \le s \}$. Then, our estimator $\hat{J}$ satisfies
	$
	\|\hat{J}-J^*\|_F \le C(M)\sqrt{s(\log n+\log k) + \log(1/\delta)},
	$
	with probability~$\ge 1-\delta$, and $\hat{J}$ can be computed in time $\poly(n,s)\cdot \binom{k}{s}$.
\end{restatable}

While Corollary~\ref{cor:linear} considers linear combinations of $k$ known matrices, one can also consider non-linear settings, where, in general, the matrices lie in a $k$-dimensional manifold. We consider manifolds that are images of Lipschitz functions from convex subsets of $\mathbb{R}^k$ to the set of matrices. For this class, the following bound can be derived (see Section~\ref{sec:manifolds} for a general argument):

\begin{restatable}{corollary}{manifold}\label{cor:manifold}
	Let $h(\vec\beta)$ be a function from $[-1,1]^k$ to the set of $n\times n$ matrices, that satisfies
	$\|h(\vec\beta)-h(\vec\beta')\|_2 \le L \|\vec\beta-\vec\beta'\|_\infty$ for some $L>0$. Define $\J = \{ J=h(\vec{\beta}) \colon \vec{\beta}\in[-1,1]^k,\|J\|_\infty \le M \}$. Then our estimator $\hat{J}$ satisfies
	$
	\|\hat{J}-J^*\|_F \le C(M)\sqrt{k(\log n+\log L) + \log(1/\delta)},
	$
	with probability $\ge 1-\delta$.
\end{restatable}

\subsubsection{Estimation from several samples} \label{sec:expose multiple samples}

When we are given access to several independent or dependent samples, we can utilize them to obtain stronger guarantees. This is done via a reduction to the single-sample setting. As a first example, assume that $\ell$ independent samples from an $n$-dimensional Ising model are obtained. Notice that these can be viewed as a single sample from an $n\ell$ dimensional model. Thus, an application of  Theorem~\ref{thm:informal-main} results in a gain of approximately $\sqrt{\ell}$ in the rate.
\begin{restatable}[Special case of Corollary~\ref{cor:iid-formal}]{corollary}{multisample}\label{cor:multiple-samples}
 Let $M>0$ and let $\mathcal{J} \subseteq \{J \colon \|J\|_\infty \le M \}$ denote a collection of interaction matrices. Assume that $\ell$ independent samples are obtained from $\Pr_{J^*}$ where $J^*\in \J$. There is an estimator $\hat{J}$ such that, with probability $\ge 1-\delta$,
	\[
	\|\hat{J}-J^*\|_F \le C(M)\sqrt{\frac{\log N(\mathcal{J}, \|\cdot\|_2, 1/(n\ell)) + \log (1/\delta)+\log\log n}{\ell}},
	\]
	where the same comments for $C(M)$ and the complexity of computing $\hat{J}$ made in Theorem~\ref{thm:informal-main} apply.
\end{restatable}
Notice that Corollary~\ref{cor:multiple-samples} is phrased in terms of a general set $\mathcal{J}$. 
In particular, it can be applied to learn Ising models from multiple samples in the same setting studied by \cite{klivans2017learning}, where they learn $J^*$ while only assuming that $\|J^*\|_\infty \le M$. Utilizing the fact that the space of interaction matrices is an $O(n^2)$-dimensional vector space, one obtains (similarly to Corollary~\ref{cor:linear}): 
\begin{restatable}{corollary}{multinoassump} \label{cor:learn-no-assuption}
	Let $\J = \{ J \colon \|J\|_\infty \le M \}$ and assume that $\ell$ independent samples from $\Pr_{J^*}$ where $J^* \in \J$are obtained. Then, there is a polynomial time algorithm that finds $\hat{J} \in \J$ such that, w.p.~$\ge 1-\delta$,
	\[
	\|\hat{J}-J^*\|_F \le 
	C(M)\lp(\sqrt{\frac{n^2\log (n\ell) + \log (1/\delta)}{\ell}} \rp).
	\]
\end{restatable}

\noindent This provides a new polynomial-time algorithm for this problem. Comparing to our error bound, \cite{klivans2017learning} achieved an error of $\sqrt{n}(\log n/\ell)^{1/4}$, as also stated in Table~\ref{tbl:results}. 

\medskip Interestingly, as we discuss next, our results can be extended to settings where the samples are not independent. 

\paragraph{Beyond Independent Samples.} 
In many applications the learning task involves either a few or many dependent samples. For the sake of presentation, we assume time-series dependencies although other dependencies of a more complex structure can be studied in a similar fashion.  Given an interaction matrix $J_0$ that controls the dependencies within each sample and $J_1$ that controls dependencies between consecutive samples, we define the following joint distribution over samples $x^1,\dots,x^\ell \in \{-1,1\}^n$:
\[
\Pr_{J_0,J_1,\ell}\lp[x^1\cdots x^\ell\rp] \propto 
\prod_{t=1}^\ell \exp\lp(-(x^t)^\top J_0 x^t/2\rp) \prod_{t=1}^{\ell-1} \exp\lp(-(x^t)^\top J_1 x^{t+1}/2\rp).
\]
The following statement bounds the learning error, that can be meaningful even for $\ell=2$:
\begin{restatable}[Special case of Corollary~\ref{cor:weakmulti}]{corollary}{multiweak}
\label{cor:weakly-dep}
	Let $\ell \ge 2$, let $\J_0$ and $\J_1$ be collections of interaction matrices of infinity norm bounded by $M$, and let $(x^1,\dots,x^\ell)\sim \Pr_{J_0^*,J_1^*,\ell}$ for some $J_0^*\in \J_0$ and $J_1^*\in\J_1$. Then, there exists an estimator $(\hat{J}_0,\hat{J}_1)$ such that, w.p. $\ge 1-\delta$, both $\| J_0^*-\hat{J}_0\|_F$ and $\|J_1^*-\hat{J}_1\|_F$ are bounded by
	\[
    \frac{C(M)}{\sqrt{\ell}} \sqrt{\log N\lp(\mathcal{J}_0, \|\cdot\|_2, \frac{1}{n\ell}\rp)
		+ \log N\lp(\mathcal{J}_1, \|\cdot\|_2, \frac{1}{n\ell}\rp)
		+\log\log n + \log(1/\delta)}.
	\]
\end{restatable}

\subsection{Lower bounds} \label{sec:lower}
We first present a general lower bound based on the metric entropy of $\J$ and then we show that our lower bound is strong enough to provide nearly tight results for the cases of linear subspaces and finite sets. The following is shown in Section~\ref{s:lower_bound}.
\begin{restatable}
{theorem}{lb}\label{t:lb}
	Let $r>0$ and suppose there exists some $R,\alpha > 0$ and a family  $\mathcal{J}$ of interaction matrices such that: (1) for all $J \in {\cal J}$ the infinity norm of $J$ is bounded by $1-\alpha$ and the diameter\footnote{A set $\mathcal{K}$ has diameter at most $R$ if for any $A,B \in \mathcal{K}$ we have $\|A-B\|_F \leq R$.} of $\mathcal{J}$ is bounded by $R$; and (2) it holds that
	\[
	\frac{\log N(\J, \|\cdot\|_F, 2r)}{2}
	\ge C(\alpha) R^2 + \log 2,
	\]
	where $C(\alpha)$ is a specific constant determined in the proof. 
	Then, any estimator $\hat{J}(x)$ based on a single sample attains a minimax error of
	$
	\max_{J^* \in \J} \E_{x \sim P_{J^*}}[\|\hat{J}(x)- J^*\|_F]
	\ge r/2.
	$
\end{restatable}

Using Theorem~\ref{t:lb}, one can derive a nearly-tight lower bound on the estimation error for linear combinations of $k$ known matrices $J_1,\dots,J_k$:
\begin{restatable}{corollary}{lowerbound}\label{cor:lb-linspace}
	Let $k \in\mathbb{N}$, let $J_1,\dots,J_k$ be interaction matrices with disjoint supports\footnote{The support of a matrix $J$ is defined as the set of its non-zero elements.} such that $\|J_i\|_\infty \le 1$ and $\|J_i\|_F \geq k$ for all $i$.  Define $\J = \{ J=\sum_i \alpha_i J_i \colon \alpha_i \in \mathbb{R}, \|J\|_\infty \le 1 \}$. Then, any one-sample estimator $\hat{J}(x)$ has a minimax error of
	$
	\sup_{J^* \in \J} \E_{x\sim \Pr_{J^*}}\lp[ \| \hat{J}-J^* \|_F \rp] \ge c \sqrt{k}.
	$
\end{restatable}
In the proof of Corollary~\ref{cor:lb-linspace}, one constructs a lower bound for a family of size $\exp(O(k))$. Hence, we derive the following tight lower bound of $\Omega(\sqrt{\log |\mathcal{J}|})$ on estimation from finite families of distributions:
\begin{corollary}
	Let $m > 0$. There exists a family $\mathcal{J}$ of cardinality $|\mathcal{J}| = m$ that satisfies $\sup_{J \in \J} \|J\|_\infty \le 1/2$, such that the minimax error satisfies 
	$
	\max_{J^* \in \J} \E_{x \sim P_{J^*}}[\|\hat{J}(x)- J^*\|_F]
	\ge c\sqrt{\log m}
	$
	(where $c>0$ is a universal constant).
\end{corollary}

\subsection{Improved bounds for estimating a single parameter} \label{sec:intro-one-param}

We further present an application to the single-sample setting studied in prior work \cite{chatterjee2007estimation,bhattacharya2018inference} on estimating a single parameter $\beta$ (this follows from the main lemmas in the proof of  Theorem~\ref{thm:informal-main}):
\begin{restatable}{corollary}{oneparam}
\label{cor:one-param-informal}
	Let $M>0$, let $J_0$ be a fixed matrix with $\|J_0\|_\infty \le 1$ and let $\beta^*$ be some unknown parameter satisfying $|\beta^*|\le M$. Then, there exists an estimator $\hat{\beta}$ from a single sample $x \sim \Pr_{\beta^* J_0}$such that w.p. $\ge 1-\delta$,
	$
	|\hat\beta-\beta^*|
	\le C(M) F({\beta^*J_0})^{-1/2}\left(\log\log n + \log(1/\delta)\right)
	$
	where $F(\cdot)$ is defined as in~\eqref{eq:ising}.
\end{restatable}
Notice that the bound is inversely proportional to the square root of the partition function $F(J^*)$, which captures the strength of dependencies between the nodes and this bound is generally stronger than the one obtained using the Frobenius norm.
Corollary~\ref{cor:one-param-informal} improves over prior work that required further assumptions to hold and obtained no guarantees at the vicinity of some phase transitions (see Section~\ref{sec:priorwork} for a comparison).

\section{Overview of Techniques}\label{sec:technique}

We start by presenting the main techniques used in this paper in Section~\ref{sec:technical-lemmas} and proceed with a proof sketch in Section~\ref{sec:pr-sketch}.

\subsection{Key Technical Insights and Vignettes}\label{sec:technical-lemmas}
\paragraph{From Low-Temperature to High-Temperature (Dobrushin).}
While nodes of the Ising model can be complexly dependent, when the correlations are sufficiently weak, the model shares important similarities to product measures. A well-studied mathematical formulation of weak dependencies for general random vectors is Dobrushin's uniqueness condition, defined formally in Section~\ref{sec:dobrushin}. For Ising models, a sufficient condition implying Dobrushin's is $\|J^*\|_{\infty} = \alpha < 1$, where $\alpha$ is a constant; see e.g.~\citep{dobrushin1987completely,stroock1992logarithmic}.\footnote{Dobrushin's condition is slightly more general and defined in terms of a bound on the total influence exercised to any one node by the other nodes. See Section~\ref{sec:dobrushin} for the general form of the condition. However, as is often done in the literature, we use the slightly stronger but easier to interpret bound on $\|J^* \|_{\infty}$.}  While Dobrushin's condition implies multiple desirable properties (see e.g. \cite{chatterjee2005concentration,weitz2005combinatorial}), we will specifically use the fact that functions of the Ising model concentrate well under this condition; see e.g.~\cite{chatterjee2005concentration,daskalakis2017concentration,gheissari2017concentration,gotze2019higher,adamczak2019note}. Unfortunately, the regimes we are considering in this paper may lie well outside Dobrushin's condition, and the tools available to handle Ising models that do not satisfy Dobrushin's condition are significantly weaker and restricted, and concentration does not hold in general.

In this work, we prove concentration inequalities for Ising models outside of Dobrushin's condition via reductions to the Dobrushin regime: we show that we can condition on a subset of the variables, such that in the conditional distribution, the unconditioned variables satisfy Dobrushin. 
A basic example where we can see such behavior is when $J$ is the incidence matrix of a bipartite graph, namely, there exists a set $I \subseteq [n]$ such that $J_{ij} = 0$ whenever either $i,j\in I$ or $i,j\in [n]\setminus I$. If we condition on $x_{-I} := x_{[n]\setminus I}$, then $\{x_i \colon i \in I\}$ are conditionally independent and particularly, satisfy Dobrushin.
The following lemma generalizes this intuition. For the purposes of this lemma, we  work with Ising models with {\em external fields}. Given an interaction matrix $J^*$ and a vector $h$ of external fields, we define the distribution over $x\in \{\pm 1\}^n$ by $\Pr_{J^*,h}(x) \propto \exp(x^{\rm T}J^*x/2 + h^T x)$.
\begin{informallemma}[Conditioning Trick] 
\label{lem:informal-subsample}
	Let $p_{J^*,h}(x)$ be an Ising model with interaction matrix $J^*$ satisfying $\|J^*\|_\infty = M$ and any external field vector $h$. Then there exist $\ell=O(\log n)$ sets $I_1,\dots,I_\ell \subseteq [n]$ such that:
	\begin{enumerate}
		\item Each $i\in [n]$ appears in exactly $\ell'=\lceil\ell/(16M)\rceil$ different sets $I_j$.
		\item For all $j \in [\ell]$, the conditional distribution of $x_{I_j}$, conditioning on any setting of $x_{-I_j}$, satisfies Dobrushin's condition.
	\end{enumerate}
\end{informallemma}

We apply this lemma repeatedly in our proof, 
as it allows us to tap into the flexibility of dealing with weakly dependent random variables.
As a first application, given a vector $a \in \mathbb{R}^n$, we obtain a lower bound on the variance of $a^\top x$. It is well known that if $x$ is an $i.i.d.$ vector of binary random variables, each with variance $v$, then $\Var(a^\top x) = v\|a\|_2^2$. Furthermore, if $x$ satisfies Dobrushin's condition, then the entries of $x$ are nearly independent and we can also show that $\Var(a^\top x) \ge \Omega(\|a\|_2^2)$.
We will use Informal Lemma~\ref{lem:informal-subsample} to show that a similar lower bound holds even beyond Dobrushin's condition. 
\begin{informallemma}[Anti-Concentration]
\label{lem:informal-var}
	Suppose that $x$ is sampled from an Ising model whose interaction matrix satisfies $\|J^*\|_\infty = O(1)$ and whose external field vector satisfies $\|h\|_\infty = O(1)$. Then, for all $a \in \mathbb{R}^n$,
	\[
	\Var(a^\top x) \ge \Omega(\|a\|_2^2).
	\]
\end{informallemma}
\begin{proof}[Proof sketch]
To prove this lemma, consider the sets $I_1,\dots,I_\ell$ from Informal Lemma~\ref{lem:informal-subsample}. First, we claim that there exists $j \in [\ell]$ such that $\|a_{I_j}\|_2^2 \ge \Omega(\|a\|_2^2)$. Indeed, by linearity of expectation, if we draw $j \in [\ell]$ uniformly at random then,
\[
\E_j[\|a_{I_j}\|_2^2]
= \E\left[\sum_{i=1}^n \mathbf{1}(i \in I_j) a_i^2\right]
= \sum_{i=1}^n \frac{\ell'}{\ell} a_i^2
= \frac{\ell'}{\ell}\|a\|_2^2
\ge \Omega(\|a\|_2^2).
\]
Hence, there exists a set $I_j$ that achieves this expectation, namely, $\|a_{I_j}\|_2^2 \ge \Omega(\|a\|_2^2)$. Now using that, conditioning on $x_{-I_j}$, $x_{I_j}$ has a low Dobrushin coefficient, as implied by Informal Lemma~\ref{lem:informal-subsample}, we can bound $\Var[a^\top x \mid x_{-I_j}] \ge \Omega(\|a_{I_j}\|_2^2)$ as discussed above, using weak dependence. Since conditioning reduces the variance on expectation, we conclude that
\[
\Var(a^\top x) 
\ge \E_{x_{-I_j}}[\Var[a^\top x | x_{-I_j}]] 
\ge \Omega(\|a_{I_j}\|_2^2)
\ge \Omega(\|a\|_2^2).
\]
\end{proof}
\paragraph{Measure Concentration for Non-Polynomials.}

There are multiple recent works studying the concentration of polynomial functions of the Ising model~\cite{daskalakis2017concentration,gheissari2017concentration,gotze2019higher,adamczak2019note,}. Here, we would like to bound the tails of general functions, in terms of their polynomial Taylor approximations. 
By a simple modification to the proof of~\cite{adamczak2019note}, we can derive the following:

\begin{theorem}\label{thm:informal-concentration}
	Let $f:\{0,1\}^n \mapsto \R$ be an arbitrary function and
	$X$ be sampled from an Ising model which satisfies Dobrushin's condition. Then
	\[
	\Pr[|f(X)-\E f(X)|>t]
	\le \exp\lp(-c\min\lp(\frac{t^2}{\|\E_X Df(X)\|_2^2 + \max_x\|Hf(x)\|_F^2}, \frac{t}{\max_x \|Hf(x)\|_2}\rp)\rp).
	\]
	Here $D_if(x) = (f(x_{i+}) - f(x_{i-}))/2$ is the \emph{discrete derivative}, where $x_{i+}$ and $x_{i-}$ are obtained from $x$ by replacing the value of $x_i$ with $1$ and $-1$, respectively. The vector of discrete derivatives is denoted by $Df$ and $Hf$ is the $n\times n$ matrix of second discrete derivatives. 
\end{theorem}
Theorem~\ref{thm:informal-concentration}
can be trivially extended to derive bounds based on higher order Taylor expansion,
extending \cite[Theorem~2.2]{adamczak2019note} for multi-linear polynomials.

\subsection{Proof Sketch of our Upper Bound}\label{sec:pr-sketch}
Using the  tools from Section~\ref{sec:technical-lemmas}, we present a sketch of the proof of our main results. We start by describing the algorithm that is going to be used. 
A standard approach is \emph{maximum likelihood estimation} (MLE), which outputs the maximizer $\hat{J}$ of the probability of the given sample $x$, namely, $\hat{J} := \argmax_{J} \Pr_{J}[x]$.
Unfortunately, for Ising models, the MLE requires computing the partition function which is computationally hard to approximate \cite{sly2014counting}. A recourse, suggested by \cite{chatterjee2007estimation}, is to compute the \emph{maximum pseudo-likelihood estimator} (MPLE) of \cite{besag1974spatial,besag1975statistical} instead. 
One typically minimizes the negative log pseudo-likelihood,
\begin{equation}
\varphi(x;J)
:=  -\sum_{i=1}^n \log\Pr_{J }[x_i \mid x_{-i}],
\end{equation}
where $\Pr_{ J}[x_i\mid x_{-i}]$ is the probability of $\Pr_{J}$ to draw $x_i$ conditioned on the remaining entries of $x$, denoted $x_{-i}$.
If $\mathcal{J}$ is a convex set, then this is a convex function  which can be optimized 
using appropriate first-order optimization techniques to find an optimum $\hat J$.

A bound on the error can then be proved by the following steps. First, we show that for every $J_0 \in \mathcal{J}$ that is far from $J^*$ we have
\begin{equation}\label{eq:gap}
\phi(x;J_0) \geq \phi(x;J^*) + \Omega(1)
\end{equation}
with high probability.
One can prove this using a Taylor approximation of $\varphi$, while utilizing the first directional derivatives of $\varphi$ that we define as
\[
\frac{\partial\varphi(x;J)}{\partial A} 
:= \lim_{t\to 0} \frac{\varphi(x;J+At) - \varphi(x;J)}{t} 
\]
and the second directed derivatives that we similarly define.
Evaluating the Taylor approximation of $t \mapsto J^* + t(J_0-J^*)$ at $t=1$, one obtains that
\begin{equation}\label{eq:taylor}
\phi(x;J_0) = \phi(x;J^*) + \|J_0-J^*\|_F \frac{\partial \phi(x;J^*)}{\partial A} + \frac{1}{2} \|J_0-J^*\|_F^2 \frac{\partial^2 \phi(x;J_x)}{\partial^2 A}; \quad
\text{where }
A = \frac{J_0 - J^*}{\|J_0-J^*\|_F}
\end{equation}
and $J_x$ is a point in the segment connecting $J_0$ with $J^*$.
Hence, to show a large gap between $\phi(x;J_0),\phi(x;J^*)$ we need a good upper bound on the absolute value of the first derivative and a good lower bound on the second derivative. 

We now turn to the specific challenges encountered when
trying to prove these bounds.
The derivative $\varphi'(x;J^*)$ takes the form
$$
\frac{\partial \varphi(x;J^*)}{\partial A} = \sum_{i=1}^n \varphi'_i(x;J^*)\quad;\quad
\varphi'_i(x;J^*) := -\frac{\partial}{\partial A}\log\Pr_{ J}[x_i \mid x_{-i}]\big|_{J = J^*}.
$$
We notice that $\E[\varphi'_i(x;J^*)\mid x_{-i}] = 0$,
hence it suffices to show concentration of the derivative around its mean to obtain a good upper bound.
However, tail bounds on the gradient from prior work do not lead us to the optimal bound on the derivative in our setting.
Instead, we use Lemma~\ref{lem:informal-subsample}
to select a number of subsets $I_1,\ldots,I_l$
of $[n]$, such that conditioned on $x_{-I_j}$, $x_{I_j}$
satisfies Dobrushin's condition. The lemma also
guarantees that each $i \in [n]$ belongs to $\ell'$ different subsets $I_j$ where $\ell'$ is a constant fraction of $\ell$,
which means we can write
\begin{equation}\label{eq:1}
\left|\frac{\partial \varphi(x;J^*)}{\partial A}\right|
= \left|\sum_{i=1}^n \varphi'_i(x;J^*)\right| = 
\left|\frac{1}{\ell'} \sum_{j=1}^\ell \sum_{i \in I_j} \varphi'_i(x;J^*)\right| 
\le \frac{\ell}{\ell'} \max_j \left|\sum_{i \in I_j} \varphi'_i(x;J^*)\right|
\le O\left( \max_{j\in [\ell]} \left|\sum_{i \in I_j} \varphi'_i(x;J^*)\right| \right).
\end{equation}
Hence, it suffices to bound each one of the terms
that appear in the maximum. In fact, since each term $\sum_{i \in I_j} \varphi'_i(x;J^*)$ has zero mean conditioned on $x_{-I_j}$, it suffices to show that it concentrates around its expectation conditioned on $x_{-I_j}$. Given that conditioning on $x_{-I_j}$, $x_{I_j}$ satisfies Dobrushin's condition, we can use the concentration inequality from Informal Theorem~\ref{thm:informal-concentration}, to derive that 
\[
\left|\sum_{i\in I_j} \varphi'_i(x;J^*)\right| \le O\lp(\lp\|\E\lp[Ax~\middle|~x_{-I_j}\rp]\rp\|_2 + \|A\|_F\rp),
\]
with high probability. Applying \eqref{eq:1} and union bounding over $j \in [\ell]$, we deduce that with high probability,
\begin{equation}\label{eq:644}
|\varphi'(x;J^*)| \le  \widetilde{O}\lp(\max_{j\in [\ell]}\lp\|\E\lp[Ax~\middle|~x_{-I_j}\rp]\rp\|_2 + \|A\|_F\rp).
\end{equation}

We now show a lower bound on $\partial ^2\varphi(x;J_x)/\partial^2 A$, where $J_x$ is in the segment connecting $J^*$ and $J_0$.
Some simple calculations show that for every $J$ in this segment,
\begin{equation}\label{eq:665}
\frac{\partial^2\varphi(x;J)}{\partial^2 A} \ge \Omega\lp(\|Ax\|_2^2\rp).
\end{equation}
We then proceed by showing that: (a) the expectation of $\|Ax\|_2^2$ is lower bounded appropriately; and (b) it concentrates around its expectation.
Note that (a) reduces to showing an expectation
bound for a sum of squares of linear functions.
This can also be phrased as a variance bound for linear 
functions of the Ising model, which is exactly the type of
result that
Informal Lemma~\ref{lem:informal-var} provides.
Using it, we manage to prove that the expectation of 
the second derivative conditioned on $x_{-I_j}$ is at least 
\begin{equation}\label{eq:77}
\E\lp[\|Ax\|_2^2 \middle| x_{-I_j}\rp]
\ge \Omega\lp(\lp\|\E\lp[Ax~\middle|~x_{-I_j}\rp]\rp\|_2^2 + \|A\|_F^2\rp).
\end{equation}
By concentration of polynomials under Dobrushin's condition \cite{adamczak2019note}, we will show that $\|Ax\|_2^2$ is at least the right hand side of \eqref{eq:77} with high probability, and taking a union bound over $j\in [\ell]$, we derive that w.h.p.,
\begin{equation}\label{eq:643}
\frac{\partial^2\varphi(x;J_x)}{\partial^2 A}
\ge \|Ax\|_2^2
\ge \Omega\lp(\max_{j\in[\ell]}\lp\|\E\lp[Ax~\middle|~x_{-I_j}\rp]\rp\|_2^2 + \|A\|_F^2\rp).
\end{equation}
If $\|J^*-J_0\|_F = \tilde{\Omega}(1)$, we derive by \eqref{eq:taylor}, \eqref{eq:644} and \eqref{eq:643} that that inequality~\eqref{eq:gap} holds w.h.p. Moreover, the further $J_0$ is from $J^*$, the higher is the probability.

We now have to use \eqref{eq:gap} to derive the error bound. To do that, we would like to show that for all $J$ that are far from $J^*$ in Frobenius norm, $\phi(x;J)>\phi(x;J^*)$. Since $\phi(x;\hat J) \leq \phi(x;J^*)$, this would imply that $\hat J$ is close to $J^*$. Proving that this statement holds with high probability for all far enough points requires more than a union bound, since there might be infinitely many points. Instead, we will construct a finite subset $\mathcal{U}$
of these points such that
every point is $\epsilon$ close to one in $\mathcal{U}$ ($\mathcal{U}$ forms an $\epsilon$-net). By a union bound over $\mathcal{U}$ we prove that with high probability \eqref{eq:gap} holds for all points in $\mathcal{U}$. Since $\phi$ is Lipschitz as a function of the matrix $J$, this suffices to argue that for all far enough points, their $\phi$ value is much larger than that of $J^*$. 
We note that union bounding \eqref{eq:gap} over $|\mathcal{U}|$ events corresponding to all possible $J \in \mathcal{U}$, requires each event to hold with sufficiently high probability, and this holds whenever $\|J^*-J\|_F \ge \Omega(\sqrt{\log |\mathcal{U}|})$.

\section{Comparison to Prior work} \label{sec:priorwork}

\paragraph{Comparison with multiple-sample bounds.}
An important line of previous work focuses on learning Ising models from multiple independent samples. The first work that gives a polynomial-time algorithm for this problem is\citet{bresler2015efficiently} and improved results were obtained by \cite{vuffray2016interaction,hamilton2017information,klivans2017learning} and others. \cite{klivans2017learning} showed that under the common assumption $\|J^*\|_\infty \le O(1)$, it is possible, using $\ell$ samples, to learn each row of $J^*$ up to an error of $O((\log(n)/\ell)^{1/4})$, which translates to a Forbenius norm error of $O(n^{1/2}(\log(n)/\ell)^{1/4})$.  
In comparison, Corollary~\ref{cor:multiple-samples} can derive better guarantees even with one or a few samples, assuming additional structural assumptions on $J^*$. Further, Corollary~\ref{cor:learn-no-assuption} that assumes the same setting as \cite{klivans2017learning}, retains polynomial-time learnability, while reducing to a single-sample algorithm that does not utilize independence. This enables to consider dependent samples with only a small overhead.

\paragraph{Comparison with single-sample bounds.}
Another interesting line of work involves learning the Ising model from a single sample of the distribution. The first to work on this problem was \citet{chatterjee2007estimation}, who assumed a single-parameter family, $\J = \{\beta J_0 \colon |\beta|\le M\}$ where the goal is to learn $\beta$. In subsequent work, \cite{bhattacharya2018inference} derived an improved bound and \citet{ghosal2018joint} presented an algorithm that jointly learns $\beta$ and an \emph{external field} $\theta$, assuming that $\Pr[x]\propto \exp(-\beta x^\top J x/2 + \theta \sum_i x_i)$. Further, \citet{daskalakis2019regression} studied linear regression with Ising model dependencies, which corresponds to learning $\beta$ together with multiple external field parameters. In comparison, Theorem~\ref{thm:informal-main} is the first to learn Ising models using one-sample from a complex family of matrices.

We further discuss the improvements over the prior work on single-sample estimation that are apparent in Corollary~\ref{cor:one-param-informal} and are essential for obtaining the results of this paper: (1) Removal of additional assumptions that require the log partition function $F(J^*)$ to be \emph{well behaved}, yielding no guarantees in scenarios such as at the vicinity of some phase transitions. (2) Obtaining high probability estimates on single-parameter families that enables generalizing to arbitrary families via a union bound. These two improvements necessitates a new proof approach as presented in Section~\ref{sec:technique}.

\section{Further Notation and Definitions}\label{s:prelims}
This section establishes the notational conventions used and presents some background definitions used throughout the paper. We start with the notational conventions.

\paragraph{Standard notations and definitions.}
\begin{itemize}
    \item Sets of indices: denote by $[n]= \{1,\dots,n\}$. Given $I \subseteq [n]$, let $-I := [n]\setminus I$, and given $i \in [n]$ let $-i = -\{i\} = [n]\setminus \{i\}$.
    \item Indexed vectors and matrices: Given a vector $a = (a_1,\dots, a_n)$ and a subset $I \subseteq [n]$, let $a_I$ denote the $|I|$ coordinate vector$\{a_{i} \colon i \in I\}$. Similarly, for a matrix $A$ of dimension $n\times m$, $I \in [n]$ and $I' \in [m]$, let $A_{II'}$ denote the corresponding submatrix. Similarly, let $A_{I} := A_{I[n]}$ and $A_{\cdot I} = A_{[n] I}$, and let $A_{i} := A_{\{i\}}$.
    \item Standard mathematical sets: Let $\mathcal{S}^{k-1}$ denote the $k$-dimensional unit sphere, $\{x \in \mathbb{R}^k \colon \|x\|_2 = 1\}$. And given $m,n > 0$ integers, let $M_{n\times m}(\mathbb{R}):= M_{n\times m}$ denote the space of real matrices of dimension $n\times m$.
    \item Ising model distributions: Given a symmetric matrix $J$ with zeros on the diagonal, let $\Pr_J$ denote the Ising model with interaction matrix $J$, defined as in \eqref{eq:ising}.
    We say that a random variable $x$ with interaction matrix $J$ and external field $h$ is $(M,\gamma)$-bounded, if $\|J\|_{\infty} \le M$, and if $\min_{i \in [n]}\Var(x_i|x_{-i}) \ge \gamma$ with probability $1$. In other words, if for all $x$ and all $i$, $\Pr[x_i=1|x_{-i}](1-\Pr[x_i=1|x_{-i}]) \ge \gamma$.
    \item Absolute constants: We let the notations $C,c',C_1,\dots$ denote constants that depend only on $M$ and $\gamma$, and are bounded whenever $M$ is bounded from above and $\gamma$ from below (unless the dependence on $\gamma$ and $M$ is stated explicitly).
    \item Conditional variance: given two random variables $X$ and $Y$, define $\Var[X|Y] := \E_X[(X-\E[X|Y])^2| Y]$. Since the conditional expectation $\E[X|Y]$ is a random variable which is a function of $Y$, so is $\Var[X|Y]$.
\end{itemize}

\paragraph{Matrix norms.}
Given a real matrix $A$ of dimension $m\times n$, let $\|A\|_F^2 = \sum_{ij} A_{ij}^2$ denote the Frobenius norm, let
\[
\|A\|_2 = \max_{u \in \mathbb{R}^n \setminus \{0\}} \frac{\|Au\|_2}{\|u\|_2}
\]
and let 
\[
\|A\|_\infty = \max_{u \in \mathbb{R}^n \setminus \{0\}} \frac{\|Au\|_\infty}{\|u\|_\infty} 
= \max_{i=1,...,m} \sum_{j=1}^n |A_{ij}|.
\]
The following inequalities are known for any symmetric matrix $A$: $\|A\|_2 \le \|A\|_F$ and $\|A\|_2 \le \|A\|_\infty$.

\paragraph{Dobrushin's condition.}
Next we define a variant of Dobrushin's uniqueness condition (high-temperature condition) for Ising models that we use. The more general form of the condition is presented in Section~\ref{sec:dobrushin}.
 \begin{definition}[Dobrushin's condition]
 \label{def:dob}
 Given an Ising model $x$ with interaction matrix $J$, we say that it satisfies \emph{Dobrushin's condition} if $\|J\|_\infty < 1$, where $\alpha:= \|J\|_\infty$ is called the \emph{Dobrushin's coefficient}. 
 \end{definition}

\paragraph{Optimization over a vector space $\mathcal{V}$.}
Notice that replacing the interaction matrices $J_1,\dots,J_k$ with other matrices $J_1',\dots,J_k'$ that span the same linear subspace of $M_{n\times n}(\mathbb{R})$ does not change the studied problem. Hence, we will forget about $J_1,\dots,J_k$ and replace them with their span $\mathcal{V}$.
\section{Analyzing the Maximum Pseudo-Likelihood Estimator}\label{sec:pr-multiparam}
This section contains the proof of our main result which is the following theorem.
\begin{restatable}{theorem}{general}\label{t:general}
	Let $\mathcal{J}$ be some set of matrices of infinity norm bounded by a constant $M$, and let
	\begin{align*}
	R &= \min \lp\{r \ge 0 \colon r \ge C(M)\sqrt{ \log\log n + \log N(\mathcal{J}, \|\cdot\|_2, cr^2/n) + \log (1/\delta)}\rp\}\\
	&\le C(M) \sqrt{ \log\log n + \log N(\mathcal{J}, \|\cdot\|_2, c/n) + \log (1/\delta)}.
	\end{align*}
	Then, with probability $1-\delta$, it holds that any point
	$
	J \in \mathcal{J}
	$
	that satisfies 
	$
	\|J - \hat{J}\|_F \ge R,
	$
	also satisfies
	$
	\varphi(J) \ge \varphi(J^*) + cR^2
	$.
	In particular, there exists an algorithm that, given one sample $x\sim\Pr_{J^*}$ where $J^* \in \mathcal{J}$, outputs $\hat{J}=\hat{J}(x)$ such that
	\[
	\|\hat{J}-J^*\|_F \le R.
	\]
\end{restatable}
Theorem~\ref{t:general} guarantees that we can find a matrix $\hat{J}$ that is $R$ close to the true interaction matrix $J^*$ in Frobenius norm. A detailed discussion about applications of Theorem~\ref{t:general}, including the case where $\mathcal{J}$ is a $k$-dimensional linear subspace of $\R^{n \times n}$, is given in Section~\ref{s:applications}.

Section~\ref{s:main_part} contains an overview of the proof, while the main lemmas are presented in the following sections. 

\subsection{Overview of the proof}\label{s:main_part}
The algorithm used to estimate $J^*$ will be the maximum pseudo-likelihood estimator (MPLE), as mentioned in Section~\ref{s:prelims}:
\begin{equation}\label{eq:pmle}
\arg\max_{J \in \mathcal{J}} PL(J ; x) := \arg\max_{J \in \mathcal{J}} \prod_{i\in [n]} \Pr_J[x_i | x_{-i}].
\end{equation}

In fact, the above maximization problem is concave and we are able to find an approximate solution using first-order methods, if $\mathcal{J}$ is a convex set. We will address the question of how to do this efficiently in Section~\ref{sec:optimization}. For convenience
in calculations, the function we will actually optimize is the negative log pseudo-likelihood:
\begin{align}\label{eq:pmleform}
\varphi(J) 
&:= - \log PL(J ; x)
= \sum_{i=1}^n (\log\cosh(J_i x) - x_i J_i x + \log 2).
\end{align}

A standard approach for showing consistency of the MPLE is by showing high probability upper bounds on the first derivatives of $\varphi$ combined with a high probability lower bound on the smallest eigenvalue of the Hessian.
To formalize this intuition in our setting, we begin by reviewing the definition of differentiation with respect to a matrix.

\begin{definition}
If $f:M_{n\times n}(\mathbb{R}) \mapsto \R$ is a twice 
continuously differentiable function and
$A \in M_{n\times n}(\mathbb{R}) \setminus \{0\}$, we define for
all $J \in M_{n\times n}(\mathbb{R})$
\[
\frac{\partial f(J)}{\partial A} = 
\lim_{t\to 0} \frac{f(J + tA)-f(J)}{t} = \frac{d f(J+tA)}{d t}\bigg|_{t=0}.
\]
\end{definition}
In the case of the MPLE, some simple calculations show that
\begin{align}\label{eq:pmle_der}
	\frac{\partial \varphi(J)}{\partial A}
	= \frac{1}{2}\sum_{i=1}^n 
	(A_ix) (\tanh(J_i x)-x_i) ; \quad
	\frac{\partial^2 \varphi(J)}{\partial A^2}
	= \frac{1}{2}\sum_{i=1}^n
	(A_ix)^2 \sech^2(J_ix).
\end{align}

The general strategy for proving that $\hat{J}$ is a good estimator of $J^*$ is the following. First, since the MPLE finds the minimum of $\phi$ on the set $\mathcal{J}$, we know that $\phi(\hat{J}) \leq \phi(J^*)$.
Next, we will show that any point $J$ that is far from $J^*$ in Frobenius norm should have a much larger value of $\phi$ than $J^*$. If we can show that this holds for all such matrices $J$ with high probability, then certainly $\hat{J}$ should lie close to $J^*$ in Frobenius norm, because $\phi(\hat{J}) \leq \phi(J^*)$. Hence, the bulk of the argument is in proving this gap between $\phi(J)$ and $\phi(J^*)$ if $J$ is far from $J^*$. 
We will do this in two steps. First, we prove that this gap exists with high probability for a single $J \in \mathcal{J}$, namely, that $\varphi(J) > \varphi(J^*)$ with high probability. This is described in more detail in Section~\ref{s:single}. The second step is to show that this holds for all $J$ that are far from $J^*$ with high probability. This requires finding a suitable $\epsilon$-net of these matrices and taking a union bound over all the elements in this net. A crucial property in this step is the Lipschitzness of $\phi$, which allows us to control the value of $\phi$ for all points that are close to a point in the net. The argument is described in Section~\ref{s:multiple}.

\subsubsection{A single dimensional problem}\label{s:single}
In this Section, we focus on a single $J_1 \in \mathcal{J}$. We would like to show that if $\|J_1 - J^*\|_F$ is large, then $\phi(J_1) - \phi(J^*)$ will also be large. This is formalized in the following Lemma.

\begin{restatable}
{lemma}{oneJ}\label{lem:one-element}
	Let $M>0$, let $x$ be drawn from the distribution parametrized by $J^*$ and let $J_1 \ne J^*$ be such $\|J_1\|_\infty,\|J^*\|_\infty \le M$. Then, there are constants $c,c'>0$ depending only on $M$ such that with probability $1-\log n\exp(-c\|J^*-J_1\|_F^2)$, it holds that
	\[
	\varphi(J_1)
	\ge \varphi(J^*) + c' \|J_1-J^*\|_F^2.
	\]
\end{restatable}

In order to prove this lemma, we have to somehow be able to compare $\phi(J^*)$ with $\phi(J_1)$. 
A common way to make this comparison is to view $\phi$ as a function on the line connecting $J_1,J^*$. Specifically, we define for all $t \in \lp[0,\|J_1 - J^*\|_F\rp]$,
\[
g(t) = \phi\lp(J(t)\rp) \quad \text{where }
J(t) = J^* + t\frac{J_1 - J^*}{\|J_1 - J^*\|_F}.
\]
Let $A = \frac{J_1 - J^*}{\|J_1 - J^*\|_F}$ and notice that it has a unit Forbenius norm and 
\begin{align*}
g'(t) = \frac{\partial \phi(J(t))}{\partial A}; \quad
g''(t) = \frac{\partial^2 \phi(J(t))}{\partial^2 A}.
\end{align*}

Thus, the problem becomes $1$-dimensional and to compare these two values, we can just use the Taylor expansion of $g$ around $J^*$. Specifically, we have that
\begin{align*}
\phi(J_1) &= g(\|J_1 - J^*\|_F) = 
g(0) + \|J_1 - J^*\|_F g'(0) + \frac{\|J_1 - J^*\|_F^2}{2} g''(\xi)\\
&= \phi(J^*) + \|J_1 - J^*\|_F  \frac{\partial \phi(J^*)}{\partial A} + \frac{\|J_1 - J^*\|_F^2}{2} \frac{\partial^2 \phi(J(\xi))}{\partial^2 A}
\end{align*}
where $\xi \in [0,\|J_1 - J^*\|_F$. Based on the previous expression, to show 
that $\phi(J_1) > \phi(J^*)$, we need an upper bound on $|\partial \phi(J^*)/\partial A|$ and also, a lower bound on $\partial^2 \phi(J)/\partial^2 A$ for all $J$ in the segment connecting $J_1, J^*$. Moreover, we would like these two bounds to be comparable to each other, so that they can be combined in the Taylor formula to prove the desired inequality.

It would be desirable to prove such bounds with high probability. From the concentration inequalities literature on Ising models, we know that such inequalities hold when the model is in high temperature, namely, $\|J^*\|_\infty < 1$ (we need to extend some of these inequalities to apply to our case). However, our assumption is just that $\|J^*\|_\infty$
is bounded by a constant, which might be greater than $1$. Hence, we need to reduce concentration in this more general case into concentration in high temperature. To do that, we first present a simple but powerful lemma that converts Ising models $\Pr_J$ with $1 < \|J\|_\infty \le C$ to models satisfying Dobrushins condition, namely $\|J\|_\infty < 1$, by conditioning on a set of nodes. While in the first setting standard concentration inequalities are not guaranteed to hold and fundamental quantities of the distribution such as the partition function are generally hard to approximate, the second setting resembles the i.i.d. scenario. While this reduction is simple and has many limitations, it suffices to show that the learning rate obtained in the constant influence regime is at least as good as the optimal rate achievable under Dobrushin's condition.
This optimality is made precise in Section~\ref{s:lower_bound}.

\begin{restatable}{lemma}{lemsubsample} \label{lem:ising-subsample}
	Let $x=(x_1,\dots,x_n)$ be an $(M,\gamma)$-Ising model, and fix $\eta \in (0,M]$. Then, there exist subsets $I_1,\dots,I_\ell \subseteq [n]$ with $\ell \le CM^2 \log n/\eta^2$ such that:
	\begin{enumerate}
		\item For all $i \in [n]$, 
		$$|\{j \in [\ell] \colon i \in I_j\}| =\lp\lceil \frac{\eta\ell}{8M} \rp\rceil
		$$.
		\item For all $j \in [\ell]$ and any value of $x_{-I_j}$, the conditional distribution of $x_{I_j}$ conditioned on $x_{-I_j}$ is an $(\eta,\gamma)$-Ising model.
	\end{enumerate}
	Furthermore, for any non-negative vector $\theta \in \mathbb{R}^n$ there exists $j \in [\ell]$ such that 
	$$
	\sum_{i \in I_j} \theta_i \ge \frac{\eta}{8M} \sum_{i=1}^n \theta_i.
	$$
\end{restatable}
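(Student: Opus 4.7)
The plan is to give a randomized construction. Set $\ell := \lceil C M^2 \log n/\eta^2 \rceil$ for a sufficiently large constant $C$ and $\ell' := \lceil \eta\ell/(8M) \rceil$. Independently for each $i \in [n]$, sample $S_i \subseteq [\ell]$ uniformly at random from subsets of size exactly $\ell'$, and define $I_j := \{i \in [n] : j \in S_i\}$. Property 1 is then immediate since $|\{j : i \in I_j\}| = |S_i| = \ell'$ for every $i$.

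The ``furthermore'' clause uses only Property 1 via linearity of expectation: if $j$ is drawn uniformly from $[\ell]$, then $\E_j\!\left[\sum_{i \in I_j}\theta_i\right] = \sum_i \theta_i\cdot (\ell'/\ell) \ge (\eta/(8M)) \sum_i \theta_i$, so some specific $j \in [\ell]$ must achieve at least this value.

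The heart of the proof is Property 2. The conditional variance lower bound $\gamma$ is automatic, because $\Var(x_i \mid x_{-I_j}, x_{I_j \setminus \{i\}}) = \Var(x_i \mid x_{-i}) \ge \gamma$ by the tower property and the original $(M,\gamma)$-hypothesis. So it remains to verify the row-sum bound $\|J_{I_j I_j}\|_\infty \le \eta$ for every $j$, i.e., $Y_{j,i} := \sum_{i' \in I_j \setminus \{i\}}|J_{ii'}| \le \eta$ whenever $i \in I_j$. For fixed $(j, i)$, the indicators $\{\mathbf{1}[i' \in I_j]\}_{i' \ne i}$ are jointly independent Bernoulli$(p)$ with $p := \ell'/\ell$, because they depend only on the mutually independent draws $\{S_{i'}\}_{i' \ne i}$. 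Hence $\E Y_{j,i} \le p M \le \eta/8$. I would then apply Bernstein's inequality to $Y_{j,i}$ and take a union bound over all $\le n\ell$ pairs $(j, i)$ to conclude that a realization of $\{S_i\}_{i\in[n]}$ simultaneously satisfying Property 2 for every $j$ exists with positive probability.

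The main obstacle is extracting a tail bound strong enough to survive this union bound. A direct application of Bernstein to $Y_{j,i}$ yields only $\exp(-c\eta/M)$, since individual $|J_{ii'}|$ may be as large as $M$, and this is too weak when $\eta \ll M \log n$. My plan to overcome this is to split the contribution to $Y_{j,i}$ into ``heavy'' entries $H_i := \{i' : |J_{ii'}| > \eta/2\}$---of which there are at most $2M/\eta$ per row by the row-sum bound---and ``light'' entries $L_i := \{i' : |J_{ii'}| \le \eta/2\}$. For the light part, Bernstein with the substantially smaller range $b = \eta/2$ gives a much sharper tail. The heavy part is controlled combinatorially via the sparsity of $H_i$: few heavy neighbors of $i$ can simultaneously land in $I_j$, and this can be bounded directly rather than through a concentration inequality. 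The choice $\ell \asymp M^2 \log n/\eta^2$ is precisely what is needed so that both contributions survive the union bound, yielding the lemma.
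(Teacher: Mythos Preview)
There is a genuine gap: your randomized construction fails with high probability, and neither half of your heavy/light split can be rescued. Take a single pair $(i,i')$ with $|J_{ii'}|=M$ (and no other nonzero entries in row $i$), and suppose $\eta<M$. Then Property~2 for this $i$ requires that for every $j\in S_i$ one has $i'\notin I_j$, i.e., $S_i\cap S_{i'}=\varnothing$. But $S_i,S_{i'}$ are independent uniform size-$\ell'$ subsets of $[\ell]$, so
\[
\E\,|S_i\cap S_{i'}|=\frac{(\ell')^2}{\ell}\;\ge\;\frac{\eta^2}{64M^2}\,\ell\;=\;\Theta(\log n),
\]
and $\Pr[S_i\cap S_{i'}=\varnothing]\approx(1-\ell'/\ell)^{\ell'}=n^{-\Theta(1)}$. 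Your heavy-part control --- ``few heavy neighbors of $i$ can simultaneously land in $I_j$'' --- is the wrong event: what you actually need is that \emph{no} heavy neighbor of $i$ lands in \emph{any} of the $\ell'$ sets $I_j$ containing $i$, and even a single heavy neighbor defeats this with probability $1-o(1)$. The light part fails for a dual reason: with range $b=\eta/2$, variance $\sigma^2\le p(\eta/2)M=\Theta(\eta^2)$ and deviation $t=\Theta(\eta)$, Bernstein yields only $\exp(-\Theta(1))$ per pair, which cannot survive a union bound over $n\ell$ pairs.

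The paper reverses the roles of the two properties. It samples each $I_j'$ independently by including every index with probability $\eta/(2M)$, and then \emph{defines} $I_j$ to be those $i\in I_j'$ whose row sum inside $I_j'$ is at most $\eta$; thus Property~2 holds by fiat. By Markov, each $i$ survives the trim with probability at least $1/2$, so $\Pr[i\in I_j]\ge\eta/(4M)$; since the $I_j$ are independent across $j$, a Hoeffding bound over $\ell=\Theta(M^2\log n/\eta^2)$ trials shows that every $i$ lands in at least $\lceil\eta\ell/(8M)\rceil$ sets, after which one trims down to equality. This never requires a per-pair tail of strength $1/(n\ell)$; it only needs a per-index lower tail over $\ell$ independent Bernoulli trials. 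Your ``furthermore'' argument is correct and identical to the paper's.
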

The proof of Lemma~\ref{lem:ising-subsample} is presented
in Section~\ref{s:condition}.
It is a simple application of the probabilistic method and
will be used multiple times throughout this work. We can now state the two lemmas bounding the first and the second derivatives of $\phi$.

\begin{restatable}{lemma}{firstder}\label{lem:derivative-one-concentration}
Let $x$ be drawn from the distribution parametrized by $J^*$ and $A \in \R^{n\times n}$. Assume $\|J^*\|_\infty \leq M$. Let $I_1, \dots, I_\ell$ be the subsets
	obtained by Lemma~\ref{lem:ising-subsample} for
	$\eta = 1/2$. Then, there exist constants $C,c$ depending only on $M$ such that for any $t > 0$ we have
	\[
	\left|\frac{\partial\varphi(J^*)}{\partial A}\right|\leq t\left(\|A\|_F + \max_{j \in [l]} \|\E[Ax|x_{-I_j}]\|_2\right)
	\]
with probability at least
\[
1-C \log n \exp\lp(-c\min\lp(t^2,\frac{t\|A\|_F}{\|A\|_2}\rp)\rp).
\]
\end{restatable}

\begin{restatable}{lemma}{secondder}\label{lem:hessian-oneparam}
Let $x$ be drawn from the distribution parameterized by $J^*$ and $A \in \R^{n\times n}$ be a symmetric matrix with zeros on the diagonal. Assume $\|J^*\|_\infty \leq M$. Let $I_1, \dots, I_\ell$ be the subsets
	obtained by Lemma~\ref{lem:ising-subsample} for
	$\eta = 1/2$. Then, there exist constants $C,c$ depending only on $M$ such that for any $t > 0$ we have that
	\begin{align*}
	\|Ax\|_2^2 \geq c\|A\|_F^2 + c\max_{j \in [l]} \|\E[Ax\mid x_{-I_j}]\|_2^2 - t \left(\|A\|_F + \max_{j \in [l]} \|\E[Ax\mid x_{-I_j}]\|_2\right)
	\end{align*}
	with probability at least
	\begin{align*}
	1 - C \log n \exp\left(-c\frac{\min(t^2,t\|A\|_F)}{\|A\|_2^2}\right).
	\end{align*}
	Consequently, for any $J \in \mathcal{J}$ we have 
	\[
	\Pr\left[\frac{\partial^2 \phi(J)}{\partial^2 A} < c'\|A\|_F^2 + c'\max_{j \in [l]} \|\E[Ax\mid x_{-I_j}]\|_2^2\right]
	\le C \log n \exp\left(-c\frac{\|A\|_F^2}{\|A\|_2^2}\right).
	\]
\end{restatable}

In both of those Lemmas, we use the technique of conditioning on subsets $I_i$ defined in Lemma~\ref{lem:ising-subsample}.
Essentially, Lemma~\ref{lem:hessian-oneparam} states that with high probability, $\|Ax\|_2^2$ is lower bounded by some specific quantity. The same quantity appears as an upper bound for the first derivative in Lemma~\ref{lem:derivative-one-concentration}. This is not a coincidence. Indeed, we will easily show in the following sections that $\|Ax\|_2^2$ is a lower bound for $\partial^2 \phi(J)/ \partial^2 A$ for all matrices $J$ with bounded infinity norm. This means that Lemma~\ref{lem:hessian-oneparam} is essentially a lower bound of the second derivative. Thus, combining Lemmas~\ref{lem:derivative-one-concentration} and ~\ref{lem:hessian-oneparam}, we can prove Lemma~\ref{lem:one-element}. This is done in Section~\ref{s:additional}.

Notice that the derivative is upper bounded by a value which is not constant, but it is rather bounded in terms of conditional expectations with respect to $x_{-I_j}$. Similarly, the second derivative is lower bounded in terms of the same quantity. Since in the Taylor expansion we are interested in the difference between the second and first derivatives, such a non-constant bound suffices to derive Lemma~\ref{lem:one-element}. Furthermore, one might not, in general, replace these bounds with constant bounds, since the term $\max_{j \in [\ell]} \|\E[A x|x_{-I_j}]\|_2$ might not concentrate outside of Dobrushin's condition, and, $\partial\varphi(J^*)/\partial A$ and $\partial^2\varphi(J)/\partial A^2$ will fluctuate along with it.
Next, we present the proofs of Lemmas ~\ref{lem:derivative-one-concentration} and ~\ref{lem:hessian-oneparam}.

\subsubsection{Completing the proof for multiple $J$'s}\label{s:multiple}

We will now explain how to use Lemma~\ref{lem:one-element} to conclude the proof of Theorem~\ref{t:general}.
First of all, we know that the value of $\hat{J}$ is smaller than $\phi(J^*)$, since the algorithm minimizes the negative log-pseudolikelihood $\phi$.
On the other hand, by Lemma~\ref{lem:one-element}, we know that if $J$ is far from $J^*$, then with high probability it's value $\phi(J)$ will be significantly larger than $\phi(J^*)$.  And the further $J$ is, the higher is the probability.
If we could prove that this holds with high probability for all points that are far from $J^*$, then we would be able to conclude that $\hat{J}$ is close to $J^*$. 
This suggests the following plan of attack: we should pick an $R>0$ such that with high probability, all points $J$ with $\|J-J^*\|_F\geq R$ satisfy $\phi(J) > \phi(J^*)$.
This would imply that $\|J^* - \hat{J}\|_F \leq R$.

Suppose $\mathcal{A}_R$ is the set of points $J$ with $\|J-J^*\|_F\ge R$.
One obstacle in proving such a statement is that there is possibly an uncountable amount of matrices $J$ that are $R$ far from $J^*$. Hence, a simple union bound over these matrices would yield meaningful result. A common way to reason about uncountable families of objects is to define an $\epsilon$-net on this set of objects. An $\epsilon$-net of the set $\mathcal{A}_R$ is a finite subset of it, such that any point in $\mathcal{A}_R$ is $\epsilon$-close to some point in the net. 
Clearly, a small $\epsilon$ means that the size of the $\epsilon$-net will be larger. Suppose $N$ is the size of the $\epsilon$ net. As a first step in our proof, 
we could try to prove that for all points $J$ in the net, $\varphi(J)>\varphi(J^*)+\Omega(R)$. By Lemma~\ref{lem:one-element} and a simple union bound, this happens with probability at least
$$
1 - N \log n e^{-cR^2}
$$
Notice that a larger $R$ implies a higher probability that our claim is true. On the other hand, $R$ will be our final bound for $\|J^* - \hat{J}\|_F$, so we would like to make it as small as possible. 
It is clear that for a sufficiently large probability of success we need $R = \Omega(\sqrt{\log N +\log\log n})$.

Suppose we chose such an $R$. Notice that we have only shown the desired claim for the points in the net. Our hope is that since the remaining points are close to the points in the net, their values will also be close. This property is characterized by the Lipschitzness of the function $\phi$. Using some straightforward computations, we bound the Lipschitzness of $\phi$ by $n$ in Section~\ref{s:additional}. 
This means that if $\epsilon = O(R/n)$, then all points in $\mathcal{A}_R$ will have a larger value of $\varphi$ than $J^*$. Thus, the size $N = N(\mathcal{A}_R,\|\cdot\|_2,R/n)$ of the net will depend on $R$. Thus, in order to design a net that takes advantage of the Lipschitzness and is guaraneed to work with high probability, we should pick an $R$ such that
$$
R \geq \sqrt{\log N(R/n)}.
$$
Since we would like to show that $\|J^* - \hat{J}\|_F$ is small, we would like to choose the smallest such $R$. Thus, our final error rate will be of the form
$$
\inf\{R : R \geq \sqrt{\log N(\E,\|\cdot\|_2,R/n)+\log\log n}\}
$$
This is exactly the guarantee that Theorem~\ref{t:general} gives us. 

\paragraph{Section organization.}
Section~\ref{s:derivative} contains the proof of Lemma~\ref{lem:derivative-one-concentration} that provides
an upper bound for the derivative of the MPLE.
In Section~\ref{s:second_der} we present
the bound on the second derivative of the MPLE(Lemma~\ref{lem:hessian-oneparam}). 
Lastly, in Section~\ref{s:additional} we give a proof of Lemma~\ref{lem:one-element}
that compares the value of a single $J$ to $J^*$ and conclude with the proof of Theorem~\ref{t:general}.
 Note that Lemma~\ref{lem:ising-subsample} will be presented in Section~\ref{s:condition}.

\subsection{Bounding the Derivative of Log Pseudo-Likelihood}\label{s:derivative}
The central goal of this section is to prove Lemma~\ref{lem:derivative-one-concentration}.
The general form of the derivative is a sum of $\tanh$ functions. The first thing one notices is that
its mean is $0$. Hence, it is enough to prove a strong
enough concentration bound for this quantity. 
There are many challenges with this approach, which we 
will now present.

First, most concentration bounds that are known hold for Ising
models that satisfy Dobrushin's condition. However,
our model may or may not be in this state.
Thus, we utilize Lemma~\ref{lem:ising-subsample} to find
a small number of sets $I_j \subseteq [n]$, such that each
$i \in [n]$ belongs to a small number of these sets.
This allows us to write the derivative sum as a sum
over all the sets $I_j$, thus focusing on the behavior of the
function in each set. 
Lemma~\ref{lem:ising-subsample} also guarantees that
if we condition on $x_{-I_j}$, the resulting
Ising model will satisfy Dobrushin's condition. 
Hence, the problem reduces to bounding the terms of the sum belonging to $I_j$ when we condition on $x_{-I_j}$.

This brings us to the next challenge, which involves
the concentration bound. Most of the existing results on concentration inequalities for the Ising model focus on
the case where the function is a multilinear polynomial.
Since this is not the case for the derivative, 
we introduce a simple modification to the already existing techniques so that we obtain the desired
concentration in our case. This will give us a bound
that depends on the conditional expectation of a quadratic
form conditioned on $x_{-I_j}$. This might seem insufficient at first, since we are not getting a uniform bound
for the derivative, but rather one that depends on
the values of $x_{-I_j}$.
However, an analogous lower bound is proven for the strong convexity in Lemma~\ref{lem:hessian-oneparam}. Hence,
the two bounds match in all instantiations of $x_{-I_j}$,
allowing us to complete the proof for the concentration of the derivative in a single direction $A $.

\subsubsection{Proof of Lemma~\ref{lem:derivative-one-concentration}}
First, we decompose the derivative according to terms corresponding to the sets $I_1,\dots,I_\ell$ as specified by
Lemma~\ref{lem:ising-subsample} for $\eta = 1/2$. Recall that each element $i \in [n]$ appears in exactly
$\ell'=\lceil \eta \ell/8\rceil$ sets:
\begin{equation} \label{eq:3}
\left| \frac{\partial\varphi(J^*)}{\partial A} \right|
= \frac{1}{2}\left|\sum_{i\in [n]} A_i x(x_i - \tanh(J^*_i x))\right|
\le \frac{1}{\ell'} \sum_{j \in [\ell]} \left|\sum_{i \in I_j} A_i x(x_i - \tanh(J^*_i x))\right|
:= \frac{1}{\ell'} \sum_{j \in [\ell]} |\psi_j(x;A)|.
\end{equation}
We will bound the terms $\{\psi_j(x;A)\colon j \in [\ell]\}$ separately, and show that each term concentrates around zero. In order to do so, we will show that conditioned on any value for $x_{-I_j}$, $\psi_j(x;A)$ concentrates around zero, while the radius of concentration can depend on the specific value of $x_{-I_j}$. First, we would like to claim that this term is conditionally zero mean:
\begin{claim} \label{cla:expectation-zero}
	For any $j \in [\ell]$,
	$
	\E\left[\psi_j(x;A) ~\middle|~ x_{-I_j}\right] = 0.
	$
\end{claim}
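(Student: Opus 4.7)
The plan is to reduce this to the standard fact that in an Ising model with zero external field, the conditional distribution of $x_i$ given $x_{-i}$ satisfies $\E[x_i \mid x_{-i}] = \tanh(J^*_i x)$. Recall that $J^*_i x = \sum_{j\neq i} J^*_{ij} x_j$ because $J^*$ has zero diagonal, so this quantity is genuinely a function of $x_{-i}$. Similarly, since $A \in \mathcal{V}$ consists of symmetric zero-diagonal matrices, $A_i x = \sum_{j\neq i} A_{ij} x_j$ is also a function of $x_{-i}$. Therefore, for each fixed $i$, the factor $A_i x$ and the centering term $\tanh(J^*_i x)$ can be pulled out of the conditional expectation given $x_{-i}$, yielding
\[
\E\bigl[A_i x \cdot (x_i - \tanh(J^*_i x)) \,\big|\, x_{-i}\bigr] = A_i x \cdot \bigl(\E[x_i \mid x_{-i}] - \tanh(J^*_i x)\bigr) = 0.
\]

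Next, I apply the tower property of conditional expectations. For any $i \in I_j$, we have $-I_j \subseteq -i$, so the $\sigma$-algebra generated by $x_{-I_j}$ is coarser than that generated by $x_{-i}$. Hence
\[
\E\bigl[A_i x \cdot (x_i - \tanh(J^*_i x)) \,\big|\, x_{-I_j}\bigr] = \E\Bigl[\,\E[A_i x \cdot (x_i - \tanh(J^*_i x)) \mid x_{-i}] \,\Big|\, x_{-I_j}\Bigr] = 0.
\]
Summing over $i \in I_j$ by linearity of conditional expectation gives $\E[\psi_j(x;A) \mid x_{-I_j}] = 0$, as claimed.

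There is essentially no obstacle here—the only point requiring any care is to verify that $A_i x$ does not depend on $x_i$, which holds because $A$ has zero diagonal by virtue of lying in $\mathcal{V}$, and similarly for $J^*_i x$. This is precisely the reason MPLE is the natural estimator to analyze in this setting: by construction its gradient decomposes into terms each of which is a conditional martingale difference with respect to $x_{-i}$, and by the tower property this martingale structure is preserved under any coarser conditioning such as $x_{-I_j}$ for $i \in I_j$. This claim will serve as the starting point for showing that $\psi_j(x;A)$ concentrates around zero conditionally on $x_{-I_j}$, which is the content of the subsequent argument in Section~\ref{s:derivative}.
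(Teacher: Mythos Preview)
Your proof is correct and follows essentially the same approach as the paper: show that each summand $A_i x(x_i - \tanh(J^*_i x))$ has zero conditional expectation given $x_{-i}$ (using that $A$ and $J^*$ have zero diagonal and that $\E[x_i \mid x_{-i}] = \tanh(J^*_i x)$), then apply the tower property to pass from conditioning on $x_{-i}$ to conditioning on $x_{-I_j}$ for $i \in I_j$, and sum.
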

\begin{proof}
	First, fix $i \in I_j$, and notice that since $A$ and $J^*$ have zeros on the diagonal, both $A_ix$ and $J_ix$ are constant conditioned on $x_{-i}$, hence 
	\[
	\E[A_ix(x_i-\tanh(J_ix))|x_{-i}] = A_ix(\E[x_i|x_{-i}] - \tanh(J^*_ix)) = 0,
	\]
	where the last equality follows from definition of the Ising mode.
	Next, notice that
	\begin{align*}
	\E\left[\psi_j(x;A) ~\middle|~ x_{-I_j}\right]
	&= \sum_{i \in I_j} \E\left[A_ix(x_i-\tanh(J^*_ix)) ~\middle|~ x_{-I_j} \right]\\
	&= \sum_{i \in I_j} \E_{x_{I_j}}\lp[\E\left[A_ix(x_i-\tanh(J^*_ix)) ~\middle|~ x_{-i} \right]\rp]=0.
	\end{align*}
\end{proof}

Next, we will bound the radius of concentration of $\psi_j(x;A)$ conditioned on $x_{-I_j}$, and show that it is roughly bounded by $O\lp(1 + \lp\|\E\lp[Ax~\middle|~x_{-I_j}\rp]\rp\|_2\rp)$. Additionally, we derive concentration inequalities for the above term.
In order to achieve this task, we utilize Lemma~\ref{lem:ising-subsample}, which states that $x_{I_j}$ is conditionally Dobrushin, conditioned on $x_{-I_j}$. Hence, we can utilize concentration inequalities for Dobrushin random variables, and achieve the following bound:
\begin{restatable}{lemma}{lemmaderivativeconcentration} \label{lem:derivative-concentration}
	For any symmetric matrix $A$ with zeros on the diagonal, we have
	\[
	\Pr\left[|\psi_j(x;A)| \ge t ~\middle|~ x_{-I_j} \right]
	\le \exp\left(-c\min\left(\frac{t^2}{\|\E\lp[Ax~\middle|~x_{-I_j}\rp]\|_2^2+\|A\|_F^2}, \frac{t}{\|A\|_2}\right)\right).
	\]
\end{restatable}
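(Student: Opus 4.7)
The plan is to condition on $x_{-I_j}$ and apply Informal Theorem~\ref{thm:informal-concentration} to $\psi_j(\,\cdot\,;A)$, viewed as a function $f$ of $y := x_{I_j}$ with $x_{-I_j}$ treated as a fixed external field. By Lemma~\ref{lem:ising-subsample} applied with $\eta = 1/2$, the conditional law of $x_{I_j}$ given $x_{-I_j}$ is an Ising model whose interaction matrix has infinity norm at most $1/2$, hence satisfies Dobrushin's condition, so the inequality of Informal Theorem~\ref{thm:informal-concentration} is available in that conditional model. Since Claim~\ref{cla:expectation-zero} gives $\E[\psi_j \mid x_{-I_j}] = 0$, the resulting bound will directly control $|\psi_j|$ itself.

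The real work is to bound the three ingredients appearing on the right of Informal Theorem~\ref{thm:informal-concentration}, namely $\|\E[Df \mid x_{-I_j}]\|_2^2$, $\max_y \|Hf\|_F^2$ and $\max_y\|Hf\|_2$. For $k \in I_j$, a direct discrete-derivative computation, using $A_{kk} = J^*_{kk} = 0$ (so that $A_k x$ and $J_k^* x$ do not depend on $x_k$) together with $|\tanh|\le 1$ and the $|J^*_{ik}|$-Lipschitzness of $\tanh$, yields a decomposition
\[
D_k f = A_k x + \sum_{i \in I_j \setminus \{k\}} A_{ik} x_i + R_k,
\]
where the remainder $R_k$ is a sum whose generic term is bounded by a constant times either $|A_{ik}|$ or $|A_i x|\,|J^*_{ik}|$. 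Taking conditional expectations given $x_{-I_j}$, squaring, and summing over $k \in I_j$, the two linear-in-$x$ contributions combine into $O(\|\E[Ax \mid x_{-I_j}]\|_2^2)$, while the remainder aggregates to $O(\|A\|_F^2)$ once the Lipschitz factors $|J^*_{ik}|$ are summed along rows via $\|J^*\|_\infty \le M$. For the Hessian, an analogous entrywise computation gives $\|Hf\|_F \le O(\|A\|_F)$ and $\|Hf\|_2 \le O(\|A\|_2)$ uniformly in $y$, again absorbing $J^*$-factors through row sums bounded by $M$. Plugging these three bounds into Informal Theorem~\ref{thm:informal-concentration} produces exactly the tail in the statement.

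The main obstacle is the first bound, on $\|\E[Df \mid x_{-I_j}]\|_2^2$: one must keep the right-hand side \emph{additive} in $\|\E[Ax \mid x_{-I_j}]\|_2^2$ and $\|A\|_F^2$, rather than picking up a multiplicative coupling or a term of order $\|Ax\|_2^2$. The decomposition above works because every ``$\tanh$-correction'' carries one extra entry of $J^*$, so summing along rows via $\|J^*\|_\infty \le M$ decouples the two natural norms of $A$ cleanly. A secondary issue is that $f$ is \emph{not} a multilinear polynomial in $y$, which is precisely why we need the general-function form of Informal Theorem~\ref{thm:informal-concentration} (the mild extension of \cite{adamczak2019note} asserted by the authors) rather than the original multilinear statement.
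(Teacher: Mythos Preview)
Your high-level plan is right and matches the paper: condition on $x_{-I_j}$, use that the conditional model is Dobrushin, apply the general-function concentration inequality. The gap is in your Hessian bound. The claim that $\|Hf\|_F \le O(\|A\|_F)$ and $\|Hf\|_2 \le O(\|A\|_2)$ uniformly in $y$ is false. Writing $f=\sum_{i\in I_j}(A_ix)(x_i-\tanh(J_i^*x))$ and applying the discrete product rule twice, $H_{k\ell}f$ contains a term of the form $E_kE_\ell[A_ix]\cdot D_kD_\ell\tanh(J_i^*x)$ for each $i$, i.e.\ the (potentially large) factor $A_ix$ multiplied by something of order $|J^*_{ik}||J^*_{i\ell}|$. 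Row-summing $J^*$ does \emph{not} kill this: it only controls $\sum_k$ or $\sum_\ell$ of the $J^*$-factors, leaving $\|Ax\|_2^2$ in the bound for $\|Hf\|_F^2$. Concretely, take $J^*$ a disjoint union of $4$-cliques with edge weight $1/6$ (so $\|J^*\|_\infty=1/2$) and $A$ the symmetrized star at node~$1$, $A_{1j}=A_{j1}=1/\sqrt{2(n-1)}$. Then $\|A\|_F=1$, yet for $k,\ell$ the two other vertices of node~$1$'s clique one checks $D_kD_\ell\tanh(J_1^*x)\ne 0$ and $|H_{k\ell}f|\asymp |A_1x|$, which at $x=\mathbf 1$ is of order $\sqrt n$. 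So $\max_y\|Hf\|_F$ and $\max_y\|Hf\|_2$ are both $\gtrsim\sqrt n$, not $O(1)$.

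The paper sidesteps this by invoking the \emph{pseudo}-derivative form of the concentration inequality (their Theorem~\ref{t:newnote}, the formal version behind Informal Theorem~\ref{thm:informal-concentration}). Instead of bounding the true $Df$ and $Hf$, they choose a pseudo-derivative $\tilde D$ with three blocks: $(A'_ix'+b'_i)_i$, $\bigl(A'_i(x'-\tanh(J'x'+h'))\bigr)_i$, and the constant $\|A'\|_F$. Two things happen. First, the second block has zero mean by the same $\E[x_i-\tanh(J_i^*x)\mid x_{-i}]=0$ argument as Claim~\ref{cla:expectation-zero}, so $\|\E\tilde D\|_2^2\le C(\|\E[Ax\mid x_{-I_j}]\|_2^2+\|A\|_F^2)$. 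Second, and this is the point your argument misses, when you differentiate $\xi^\top\tilde D$ once more, the large factor $A_ix$ sits \emph{inside} $\tilde D$, so $D_j$ hits it and produces just $A_{ij}$; the resulting pseudo-Hessian is the constant matrix $\tilde H=C(A\mid A\mid 0)^\top$, with $\|\tilde H\|_F\le C\|A\|_F$ and $\|\tilde H\|_2\le C\|A\|_2$ trivially uniform in $x$. Your entrywise approach differentiates the product $(A_ix)\cdot(\dots)$ twice and therefore cannot avoid retaining one undifferentiated copy of $A_ix$.
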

Most existing concentration bounds for the Ising model 
focus on multilinear polynomials, which is obviously not
the form $\psi_j$ has.
Hence, in order to prove this bound, we modified slightly
the existing proof of Theorem~\ref{t:note} to present a concentration inequality for general functions of ising models satisfying Dobrushin's condition, presented in Section~\ref{s:concentration}.
The concentration radius we get depend on bounds of the first and second discrete derivatives of the function. 
The full proof of Lemma~\ref{lem:derivative-concentration} can be found in Section~\ref{s:conc_aux}.

Combining Lemma~\ref{lem:derivative-concentration} with \eqref{eq:3}, we obtain the proof of Lemma~\ref{lem:derivative-one-concentration}, which we restate here for convenience.
\firstder*

\begin{proof}
	For any $j \in [\ell]$, Lemma~\ref{lem:derivative-concentration} implies that
	\[
	\Pr\left[|\psi_j(x;A)| \ge t \lp(\|A\|_F + \lp\|\E[Ax|x_{-I_j}]\rp\|_2\rp)\right]
	\le \exp\lp(-c \min\lp( t^2, \frac{t \|A\|_F}{\|A\|_2}\rp)\rp).
	\]
	By a union bound over $j \in [\ell]$, with probability at least $1-C\log n\exp(-c \min( t^2, t \|A\|_F/\|A\|_2))$, 
	\[
	|\psi_j(x;A)| \le t (\|A\|_F + \|\E[Ax|x_{-I_j}]\|_2)
	\]
	holds for all $j \in [\ell]$. By
	\eqref{eq:3}, whenever this holds, we also have
	\[
	\left|\frac{\partial\varphi(J^*)}{\partial A}\right|
	\le \frac{1}{\ell'} \sum_{j \in [\ell]} |\psi_j(x;A)|
	\le \frac{\ell}{\ell'} t \left(\|A\|_F + \max_{j\in[\ell]}\|\E[Ax|x_{-I_j}]\|_2\right).
	\]
	Since $\ell/\ell'$ is at most a constant, the proof
	is complete.
\end{proof}
\subsection{Strong Convexity of Log Pseudo-Likelihood}\label{s:second_der}
The central goal of this section is to prove Lemma~\ref{lem:hessian-oneparam}.
We break it into two parts.
Section~\ref{s:strong_conv} deals with the
overall proof of the lemma, while Section~\ref{subsec:exp-bound} contains 
an auxiliary lemma utilized in the proof.
The approach here is quite similar
to the one for the derivative. However, the specific tools
differ, since this is an anticoncentration result.
The argument begins by lower bounding the second derivative
by a quadratic form depending only on the direction $A$ of the derivative, namely $\|Ax\|_2^2$. 
Then, the problem reduces to showing anticoncentration for
this quadratic form. 
This will be accomplished by establishing two claims.

First, we show that the mean
of this form is bounded away from $0$ by a quantity 
that depends on the Frobenius norm of $Ax$, conditional on the values $x_{-I_j}$.
This would trivially be true if the spins were independent.
In the case of the Ising model, we use Lemma~\ref{lem:ising-subsample} to find a subset of the nodes
that has a small Dobrushin constant. This means that these
nodes will be weakly correlated conditional on the rest, hence
close to independent.
This translates to the covariance matrix being diagonally
dominant, from which the claim follows.

Second, we need to show that $\|Ax\|_2^2$ concentrates
around its mean in a radius that is of the same or less order
that the mean, conditional on $x_{-I_j}$.
This concentration will be easier than the one obtained
for the derivative, since we are now dealing with
a quadratic function, to which known concentration 
results apply \cite{adamczak2019note}.
Therefore, we have shown that the second derivative
in a particular direction $A$ is sufficiently large.

\subsubsection{Proof of Lemma~\ref{lem:hessian-oneparam}}\label{s:strong_conv}

The first step will be to lower bound the
second derivative by a 
 quadratic form. This way, the task of proving strong convexity becomes significantly easier. For all $J \in \mathcal{J}$ and any direction $A$, we obtain that
\begin{align} \label{eq:second_der_lower}
4\frac{\partial^2 \varphi(J)}{\partial A^2}
&= \sum_{i=1}^n 
(A_ix)^2 \sech^2(J_ix)
\ge \sum_{i=1}^n (A_i x)^2 \sech^2(\|J x\|_\infty)\notag\\
&= \lp\|Ax\rp\|_2^2 \sech^2(\|J x\|_\infty)
\ge \|A x\|_2^2 \sech^2(\|J\|_\infty \|x\|_\infty)\\
&\ge \|A x\|_2^2 \sech^2(M),\label{eq:40}
\end{align}
where we used the facts that $\sech x \ge \sech y$ whenever $|x| \le |y|$ and $\|J\|_\infty \le M$ for all $J \in \mathcal{J}$. Notice that the bound we get only depends on the direction $A$ of the derivative and not on the point $J$ where
we are calculating it. Hence, any bounds we manage to prove
on this quantity will hold for all $J \in \mathcal{J}$. Our goal is to show that with high probability $\|Ax\|_2^2$ is sufficiently large.

The general strategy to showing this anticoncentration property
will be to bound it's expectation away from $0$ and then
show that the function concentrates well around that value.
We now focus on the first goal.
 Since $\|Ax\|_2^2$ is a sum of squares of linear functions, we proceed with a lower bound on the second moment of any linear function. What we need essentially is a variance lower bound. Therefore, we have the following lemma.
\begin{lemma} \label{lem:expectation-prob}
	Let $y$ be an $(M,\gamma)$-Ising model. Then, for any vector $a \in \mathbb{R}^m$,
	\[
	\Var(a^\top x) \ge \frac{c \gamma^2\|a\|_2^2}{M}.
	\]
\end{lemma}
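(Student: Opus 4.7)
The plan is to invoke the conditioning trick of Lemma~\ref{lem:ising-subsample} to reduce to a subsystem in Dobrushin's regime, where a standard variance lower bound for linear functions applies, and then transfer the bound back via the law of total variance.

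Concretely, I would apply Lemma~\ref{lem:ising-subsample} with a fixed constant, say $\eta = 1/2$, producing subsets $I_1,\dots,I_\ell$ of $[n]$ such that for every $j$, conditional on any realization of $x_{-I_j}$, the vector $x_{I_j}$ is a $(1/2,\gamma)$-Ising model. Feeding the nonnegative vector $\theta = (a_1^2,\dots,a_n^2)$ into the ``furthermore'' clause then yields an index $j^* \in [\ell]$ with
\[
\|a_{I_{j^*}}\|_2^2 \ge \frac{1}{16M}\|a\|_2^2.
\]

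Next, I would observe that $a_{-I_{j^*}}^\top x_{-I_{j^*}}$ is a constant once $x_{-I_{j^*}}$ is fixed, so the law of total variance gives
\[
\Var(a^\top x)\ge \E_{x_{-I_{j^*}}}\bigl[\Var\bigl(a_{I_{j^*}}^\top x_{I_{j^*}}\,\big|\,x_{-I_{j^*}}\bigr)\bigr].
\]
Inside the expectation, the distribution of $x_{I_{j^*}}$ is a $(1/2,\gamma)$-Ising model, so it suffices to establish an auxiliary statement of the form $\Var(b^\top y) \ge c\gamma^2\|b\|_2^2$ for any $(1/2,\gamma)$-Ising model $y$ and any vector $b$. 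The point of conditioning is precisely to buy Dobrushin, where such anti-concentration is available through standard machinery: for instance, via a Poincar\'e inequality for the Glauber dynamics whose spectral gap is bounded below by a constant in the Dobrushin regime, or through a direct Gershgorin-style analysis of the covariance matrix combined with the pointwise bound $\Var(y_i \mid y_{-i}) \ge \gamma$ and the geometric decay of covariances implied by the Dobrushin contraction on conditional expectations.

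Combining the three ingredients gives
\[
\Var(a^\top x) \ge c\gamma^2 \|a_{I_{j^*}}\|_2^2 \ge \frac{c'\gamma^2}{M}\|a\|_2^2,
\]
which is the desired bound. I expect the main obstacle to be the auxiliary Dobrushin variance lower bound, and in particular pinning down the stated $\gamma^2$ dependence cleanly; this is where one must do genuine work inside Dobrushin, as opposed to black-boxing it via product-measure intuition, which only yields a $\gamma\|a\|_2^2$ rate directly. Once that auxiliary lemma is in hand, the remainder is essentially a bookkeeping exercise tying together Lemma~\ref{lem:ising-subsample} and the law of total variance.
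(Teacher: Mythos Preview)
Your overall architecture---condition via Lemma~\ref{lem:ising-subsample}, pick a good subset using the ``furthermore'' clause with $\theta_i=a_i^2$, then apply the law of total variance---matches the paper exactly. The divergence is in the choice of $\eta$, and it is not cosmetic: the paper conditions with $\eta=\gamma/4$ rather than $\eta=1/2$. With that choice the conditional model is $(\gamma/4,\gamma)$-bounded, and now a direct Gershgorin argument on the covariance matrix goes through: using the Wasserstein bound of Lemma~\ref{l:wasser} one gets $\sum_{j\ne i}|\mathrm{Cov}(x_i,x_j)|\le \tfrac{M}{1-M}\le \gamma/2$, so $\Var(a_I^\top x_I\mid x_{-I})\ge (\gamma-\gamma/2)\|a_I\|_2^2=\tfrac{\gamma}{2}\|a_I\|_2^2$. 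The second factor of $\gamma$ in the final $\gamma^2/M$ rate then comes from the smaller subset fraction $\eta/(8M)=\gamma/(32M)$, not from the conditional variance bound. In other words, the paper trades subset mass for a trivially diagonally dominant covariance, and this trade is precisely what makes your ``main obstacle'' evaporate.

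By contrast, with your choice $\eta=1/2$ you are left needing $\Var(b^\top y)\ge c\gamma^2\|b\|_2^2$ for a $(1/2,\gamma)$-Ising model, and neither of your proposed tools delivers this cleanly. The Poincar\'e inequality for Glauber dynamics bounds the variance from \emph{above} by the Dirichlet form, which is the wrong direction. The Gershgorin route fails because at $M=1/2$ the off-diagonal row sum bound from Lemma~\ref{l:wasser} is $M/(1-M)=1$, which does not beat the diagonal lower bound $\gamma$ when $\gamma<1$. You could of course recover by applying the conditioning trick a second time inside the $(1/2,\gamma)$ model with $\eta=\gamma/4$, but then you are just reproducing the paper's argument with an extra detour. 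The fix is simply to set $\eta=\gamma/4$ from the start.
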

The proof is presented in Subsection~\ref{subsec:exp-bound}. Here now give a short outline of it.
\begin{proof}[Proof sketch.]
First, let's examine what would happen if all the coordinates of $x$ were independent, each coordinate having a variance of at least $\gamma$. Then, 
\[
\Var(a^\top x) = \sum_i \Var(a_i x_i) = \sum_i a_i^2 \Var(x_i) \ge \gamma \|a\|_2^2.
\]
We would like to apply the same logic in our situation.
Specifically, we would like the coordinates of $x$ to be as close to independent as possible. Therefore, we will use Lemma~\ref{lem:ising-subsample} to find a set of coordinates $I$ such that $x_I$ is an $(\alpha,\gamma)$-Ising model conditioned on any value for $x_{-I}$, where $\alpha = c\gamma$. In this regime, the interactions between the nodes are weak, resulting in the covariance matrix of $x_I$ conditioned on $x_{-I}$ being diagonally dominant. This allows us to derive the same variance bound as if $x_I$ was independent:
\[
\Var\lp[a^\top x~\middle|~ x_{-I}\rp]
= \Var\lp[a_I^\top x_I~\middle|~x_{-I}\rp]
\ge c\gamma \|a_I\|_2^2.
\]
Lemma~\ref{lem:ising-subsample} guarantees that we can select the set $I$ such that $\|a_I\|_2^2 \ge \alpha/(8M) \|a\|_2^2$, by substituting $\theta_i$ with $a_i^2$ in the lemma. Hence,
\[
\Var\lp[a^\top x~\middle|~x_{-I}\rp] \ge c\gamma\|a_I\|_2^2 
\ge \frac{c' \gamma \alpha\|a\|_2^2}{M} 
\ge \frac{c'' \gamma^2\|a\|_2^2}{M}.
\]
Finally, since conditioning can only decrease the conditional variance on expectation,
the same bound holds for $\Var[a^\top x]$.
\end{proof}
Now, we would like to apply Lemma~\ref{lem:expectation-prob}
to obtain a lower bound for the expectation of $\|Ax\|_2^2$.
A simple application of the Lemma shows that: 
\begin{align} \label{eq:37}
\E[\|Ax\|_2^2] 
&= \sum_{i} \E[(A_ix)^2] 
\ge \sum_{i} (\Var[(A_ix)] + \E[A_ix]^2)\\
&\ge \sum_i \Omega(\|A_i\|_2^2) + \|\E[Ax]\|_2^2
= \Omega(\|A\|_F^2) + \|\E[Ax]\|_2^2.
\end{align}
Recall, however, that our goal in Lemma~\ref{lem:hessian-oneparam} is to lower bound the second derivative of $\varphi$ in terms of the conditional expectation of $Ax$, conditioned on $x_{-I_j}$, for $j \in [\ell]$. The following lemma presents a
variant of \eqref{eq:37} when we condition on $x_{-I_j}$.
Its proof is an application of Lemma~\ref{lem:ising-subsample} along with some simple
calculations with conditional expectations.

\begin{lemma} \label{lem:bnd-expectation}
	Fix $A \in \mathcal{A}$. For any $I_j$ and any $x_{-I_j}$, it holds that
	\[
	\E\lp[\|Ax\|_2^2 ~\middle|~ x_{-I_j}\rp]
	\ge c\gamma^2 \|A_{\cdot I_j}\|_F^2  + \lp\|\E\lp[Ax~\middle|~x_{-I_j}\rp]\rp\|_2^2.
	\]
	Furthermore, there exists some $j \in [\ell]$ such that 
	$$
	\E\lp[\|Ax\|_2^2 ~\middle|~ x_{-I_j}\rp] \ge c\gamma^2/M
	$$
	with probability $1$.
\end{lemma}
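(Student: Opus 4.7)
The plan is to start with the elementary decomposition $\E[\|Ax\|_2^2 \mid x_{-I_j}] = \sum_i \Var(A_i x \mid x_{-I_j}) + \|\E[Ax \mid x_{-I_j}]\|_2^2$. This immediately separates off the second term on the right-hand side of the desired bound, so the whole task reduces to showing $\sum_i \Var(A_i x \mid x_{-I_j}) \ge c\gamma^2 \|A_{\cdot I_j}\|_F^2$.

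For that variance lower bound, observe that $A_i x = \sum_{k \in I_j} A_{ik} x_k + \sum_{k \notin I_j} A_{ik} x_k$, and the second piece is a constant once we condition on $x_{-I_j}$. Hence $\Var(A_i x \mid x_{-I_j}) = \Var(\sum_{k\in I_j} A_{ik} x_k \mid x_{-I_j})$. By the second conclusion of Lemma~\ref{lem:ising-subsample} applied with $\eta = 1/2$, the conditional law of $x_{I_j}$ given $x_{-I_j}$ is a $(1/2, \gamma)$-Ising model, so Lemma~\ref{lem:expectation-prob} (applied in the conditional distribution, with the role of $a$ played by the row $A_{i,I_j}$ and with $M$ replaced by $1/2$) yields $\Var(\sum_{k\in I_j} A_{ik} x_k \mid x_{-I_j}) \ge c\gamma^2 \|A_{i,I_j}\|_2^2$. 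Summing over $i \in [n]$ and recognizing that $\sum_i \|A_{i,I_j}\|_2^2 = \|A_{\cdot I_j}\|_F^2$ gives the first claim.

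For the second (``furthermore'') assertion, I will use the probabilistic-method statement at the end of Lemma~\ref{lem:ising-subsample} with the non-negative weights $\theta_k := \|A_{\cdot k}\|_2^2$, which satisfy $\sum_{k=1}^n \theta_k = \|A\|_F^2 = 1$. The lemma guarantees some $j \in [\ell]$ with $\sum_{k \in I_j} \theta_k \ge \eta/(8M) = 1/(16M)$, i.e.\ $\|A_{\cdot I_j}\|_F^2 \ge 1/(16M)$. Plugging this lower bound into the first part produces $\E[\|Ax\|_2^2 \mid x_{-I_j}] \ge c\gamma^2/M$, which holds for every realization of $x_{-I_j}$.

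The only mildly delicate step is lining up the conditional application of Lemma~\ref{lem:expectation-prob}: one has to verify that conditioning on $x_{-I_j}$ preserves the $(\eta,\gamma)$-Ising structure on $x_{I_j}$ (this is exactly what Lemma~\ref{lem:ising-subsample}'s second conclusion provides) and that the conditional external field induced on $I_j$ does not affect the variance lower bound, since Lemma~\ref{lem:expectation-prob} is stated for a general $(M,\gamma)$-Ising model and therefore tolerates arbitrary external fields. Once these identifications are made the calculation is a straightforward bookkeeping exercise, so I do not anticipate any additional obstacles.
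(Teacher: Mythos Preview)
Your proposal is correct and follows essentially the same argument as the paper: the variance--mean decomposition of $\E[\|Ax\|_2^2\mid x_{-I_j}]$, the reduction of each $\Var(A_i x\mid x_{-I_j})$ to $\Var(A_{i,I_j}x_{I_j}\mid x_{-I_j})$, the application of Lemma~\ref{lem:expectation-prob} to the conditional $(1/2,\gamma)$-Ising model, and the use of the averaging clause of Lemma~\ref{lem:ising-subsample} with $\theta_k=\|A_{\cdot k}\|_2^2$ for the ``furthermore'' part are exactly the steps the paper takes. Your remark that Lemma~\ref{lem:expectation-prob} tolerates arbitrary external fields (so the induced field on $I_j$ is harmless) is a helpful clarification that the paper leaves implicit.
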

\begin{proof}
	Since conditioned on $x_{-I_j}$, $x_{I_j}$ is a $(1/2,M)$ Ising model, we derive by Lemma~\ref{lem:expectation-prob} that
	\begin{align*}
	\E\lp[\lp\|Ax\rp\|_2^2 | x_{-I_j}\rp]
	&= \sum_{i=1}^n \E\lp[(A_ix)^2|x_{-I_j}\rp]
	= \sum_{i=1}^n \lp(\E\lp[A_ix|x_{-I_j}\rp]^2 + \Var\lp[A_ix|x_{-I_j}\rp]\rp) \\
	&= \lp\|\E\lp[Ax|x_{-I_j}\rp]\rp\|_2^2 + \sum_{i=1}^n \Var\lp[A_{i,I_j}x_{I_j}|x_{-I_j}\rp]
	\ge \lp\|\E\lp[Ax|x_{-I_j}\rp]\rp\|_2^2 + \sum_{i=1}^n c\gamma^2\lp\|A_{i,I_j}\rp\|_2^2\\
	&\ge \lp\|\E\lp[Ax|x_{-I_j}\rp]\rp\|_2^2 + c\gamma^2\lp\|A_{\cdot I_j}\rp\|_F^2.
	\end{align*}
	To prove the second part of the lemma, it suffices to show that there exists $j\in [\ell]$ such that $\|A_{\cdot I_j}\|_F^2 \ge c' \|A\|_F^2/M = c'/M$. 
	Recall that the sets $\{I_j\}_{j \in [\ell]}$ were obtained from Lemma~\ref{lem:ising-subsample} with $\eta = 1/2$. By the last part of this lemma, if we substitute $\theta_i =\|A_{\cdot i}\|_2^2$, we derive that there exists a $j \in [\ell]$ such that
	\[
	\|A_{\cdot I_j}\|_F^2
	= \sum_{i \in I_j} \theta_i
	\ge \frac{c}{M} \sum_{i \in [n]} \theta_i
	= \frac{c \|A\|_F^2}{M}
	= \frac{c}{M},
	\]
	recalling that $\|A\|_F=1$ for all $A \in \mathcal{A}$.
\end{proof}
According to Lemma~\ref{lem:bnd-expectation}, if
we can show that $\|Ax\|_2^2$ is concentrated
at a radius of 
$$O\lp(\lp\|A_{I_j}\rp\|_F^2 + \lp\|\E[Ax]|x_{-I_j}\rp\|_2^2\rp)$$
around its mean, conditional
on $x_{-I_j}$, then anticoncentration follows.
The next Lemma contributes to the proof in this direction.
\begin{lemma}\label{lem:conc-mainpart}
	Fix symmetric matrix $A$ with zeros on the diagonal and $j \in [\ell]$. Then, for any fixed value of $x_{-I_j}$ and
	any $t>0$
	\[
	\Pr\left[\lp\|Ax\rp\|_2^2 < \E\left[\|Ax\|_2^2~\middle|~x_{-I_j}\right] - t ~\middle|~ x_{-I_j}\right]
	\le \exp\left(-\frac{c}{\|A\|_2^{2}}\min\left(\frac{t^2}{\|A\|_F^2+\|\E\lp[Ax~\middle|~x_{-I_j}\rp]\|_2^2},t\right)\right).
	\]
\end{lemma}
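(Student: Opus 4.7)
The plan is to fix $z := x_{-I_j}$ and view $f(y) := \|Ax\|_2^2$, with $y := x_{I_j}$, as a function on the hypercube that is quadratic in $y$. By Lemma~\ref{lem:ising-subsample} applied with $\eta = 1/2$, the conditional law of $y$ given $z$ is a $(1/2,\gamma)$-Ising model and hence satisfies Dobrushin's condition, so one may invoke the general measure-concentration bound of Informal Theorem~\ref{thm:informal-concentration} (or equivalently Theorem~\ref{t:note} in the form established by~\cite{adamczak2019note}) in the conditional measure. The one-sided estimate in the statement will fall out of the two-sided tail bound it produces.

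Writing $Ax = A_{\cdot I_j}\,y + A_{\cdot -I_j}\,z$ and expanding yields
\[
f(y) = y^\top M y + 2(A_{\cdot I_j}^\top A_{\cdot -I_j} z)^\top y + \|A_{\cdot -I_j} z\|_2^2, \qquad M := A_{\cdot I_j}^\top A_{\cdot I_j}.
\]
Since $y_i^2 = 1$, removing the diagonal of $M$ only shifts $f$ by a constant, so a direct calculation gives $D_i f(y) = 2(\tilde M y)_i + 2(A_{\cdot I_j}^\top A_{\cdot -I_j} z)_i$ and $(Hf)_{ik} = 2\tilde M_{ik}$ for $i \neq k$, where $\tilde M := M - \mathrm{diag}(M)$, and $(Hf)_{ii}=0$ since $(\tilde M y)_i$ is independent of $y_i$. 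Submultiplicativity of the operator and Frobenius norms then yields $\|Hf\|_2 \le 4\|M\|_2 \le 4\|A\|_2^2$ and $\|Hf\|_F \le 2\|M\|_F \le 2\|A_{\cdot I_j}\|_2 \|A_{\cdot I_j}\|_F \le 2\|A\|_2 \|A\|_F$. For the expected gradient, the identity $Mu + A_{\cdot I_j}^\top A_{\cdot -I_j} z = A_{\cdot I_j}^\top(A_{\cdot I_j} u + A_{\cdot -I_j} z)$ with $u = \E[y \mid z]$ gives
\[
\tfrac{1}{2}\,\E[Df \mid z] = A_{\cdot I_j}^\top\,\E[Ax \mid z] \;-\; \mathrm{diag}(M)\,\E[y \mid z],
\]
from which $\|\E[Df \mid z]\|_2^2 \le C\|A\|_2^2 \bigl(\|\E[Ax \mid z]\|_2^2 + \|A\|_F^2\bigr)$, using $|\E[y_i \mid z]| \le 1$ and $\|\mathrm{diag}(M)\|_2^2 \le \|\mathrm{diag}(M)\|_F^2 \le \|M\|_F^2 \le \|A\|_2^2 \|A\|_F^2$.

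Plugging these three bounds into the Dobrushin concentration inequality for $f$ in the conditional measure produces
\[
\Pr\bigl[|f - \E[f \mid z]| > t \,\big|\, z\bigr] \le \exp\!\Bigl(-c\min\!\Bigl(\tfrac{t^2}{\|A\|_2^2\bigl(\|A\|_F^2 + \|\E[Ax \mid z]\|_2^2\bigr)},\,\tfrac{t}{\|A\|_2^2}\Bigr)\Bigr),
\]
which is exactly the asserted estimate after restricting to the lower tail. The one delicate point is the treatment of $\mathrm{diag}(M)$: a crude bound would replace $\|\mathrm{diag}(M)\,\E[y \mid z]\|_2$ by something like $(\max_i M_{ii})\sqrt{|I_j|}$, a quantity that cannot be absorbed into the $\|A\|_F^2$ slot that the statement demands. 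The resolution is the Frobenius-norm bookkeeping $\|\mathrm{diag}(M)\|_F^2 \le \|M\|_F^2 \le \|A\|_2^2 \|A\|_F^2$, which collapses the diagonal's contribution into the $\|A\|_2^2 \|A\|_F^2$ term in the variance proxy and matches the form of the lemma.
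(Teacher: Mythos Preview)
Your proof is correct and follows essentially the same route as the paper: zero out the diagonal of the quadratic form, apply the Dobrushin concentration inequality in the conditional measure, and reduce the resulting norm quantities to $\|A\|_2$, $\|A\|_F$, and $\|\E[Ax\mid x_{-I_j}]\|_2$ via the identity $A_{\cdot I_j}^\top(A_{\cdot I_j}\E[y\mid z]+A_{\cdot -I_j}z)=A_{\cdot I_j}^\top\E[Ax\mid z]$ and the bound $\|M\|_F\le\|A\|_2\|A\|_F$. The only mechanical difference is that the paper applies Theorem~\ref{t:note} directly to $x^\top E x$ with $E=A^\top A-\mathrm{diag}(A^\top A)$ on the full coordinate set (leaving the restriction to $I_j$ implicit), whereas you carry the restriction explicitly and invoke the discrete-derivative form (Theorem~\ref{t:newnote}); your handling of the diagonal via $\|\mathrm{diag}(M)\E[y\mid z]\|_2\le\|\mathrm{diag}(M)\|_F$ is exactly the paper's step $\|\E[(E-A^\top A)x\mid x_{-I_j}]\|_2^2\le\sum_i(A^\top A)_{ii}^2$ in restricted coordinates.
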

The proof is given in Section~\ref{s:conc-mainpart}.
Note that by Lemma~\ref{lem:ising-subsample} we know that $x_{I_j}$ is conditionally Dobrushin, conditioned on $x_{-I_j}$. The reason why Lemma~\ref{lem:conc-mainpart}
does not directly follow from the concentration inequality for quadratic forms (Theorem ~\ref{t:note}) is that the matrix
$A^\top A$ does not necessarily have zeroes on the diagonal.
Hence, the proof consists of a simple argument that reduces to the concentration of a polynomial of the form $x^\top \tilde{A}x$ where $\tilde{A}$ has zeros on the diagonal.

By combining Lemmas~\ref{lem:bnd-expectation} and ~\ref{lem:conc-mainpart}, we are now able to prove
that $\|Ax\|_2^2$ is lower bounded with high probability,
conditioned on $x_{-I_j}$.
This allows us to complete the proof of Lemma~\ref{lem:hessian-oneparam}.
\secondder*

\begin{proof}
	Let $E_j$ denote the event that 
	\begin{equation}\label{eq:332}
	\|Ax\|_2^2 > \E[\|Ax\|_2^2\mid x_{-I_j}] - t \left(\|A\|_F + \|\E[Ax\mid x_{-I_j}]\|_2\right).
	\end{equation}
	By Lemma~\ref{lem:conc-mainpart}, 
	\[
	\Pr[E_j]
	= \E_{x_{-I_j}}\left[\Pr[E_j \mid x_{-I_j}]\right]
	\ge 1- \exp\left(-c \min(t^2,t\|A\|_F)/\|A\|_2^2\right).
	\]
	By a union bound, $\Pr\left[\bigcap_{j}E_j\right] \ge 1 - C \log n \exp\left(-c \min(t^2,t\|A\|_F)/\|A\|_2^2\right)$.
	
	From now onward, assume that $\bigcap_j E_j$ holds. Lemma~\ref{lem:bnd-expectation} implies that 
	\[
	\E\left[\|Ax\|_2^2\mid x_{-I_j}\right] 
	\ge c\|A_{\cdot I_j}\|_F^2  + \|\E[Ax|x_{-I_j}]\|_2^2
	\]
	and additionally, that there exists some $j$ such that $\|A_{\cdot I_j}\|_F^2 \ge c\|A\|_F^2$. We derive that 
	\begin{equation}\label{eq:333}
	\|Ax\|_2^2 > c\|A_{\cdot I_j}\|_F^2  + \|\E[Ax|x_{-I_j}]\|_2^2 - t \left(\|A\|_F + \|\E[Ax\mid x_{-I_j}]\|_2\right)
	\end{equation}
	holds for all $j$. Let $j_1 = \max_j \|\E[Ax\mid x_{-I_j}]\|_2$ and $j_2 = \max_j \|A_{\cdot I_j}\|_F$. Then, substituting $j_1$ in \eqref{eq:333}, we derive that
	\[
	\|Ax\|_2^2 > \max_j \|\E[Ax\mid x_{-I_j}]\|_2^2 - t \left(\|A\|_F + \max_j\|\E[Ax\mid x_{-I_j}]\|_2\right).
	\]
	By substituting $j_2$ in \eqref{eq:333} we derive that
	\[
	\|Ax\|_2^2 > c'\|A\|_F^2  - t \left(\|A\|_F + \max_j\|\E[Ax\mid x_{-I_j}]\|_2\right).
	\]
	Taking an average of the above two inequalities, we derive that
	\[
	\|Ax\|_2^2 > c'\|A\|_F^2/2 + \max_j \|\E[Ax\mid x_{-I_j}]\|_2^2/2 - t \left(\|A\|_F + \max_j\|\E[Ax\mid x_{-I_j}]\|_2\right).
	\]
	This concludes the proof for the first part of the lemma. For the second part, we substitute $t = c''\|A\|_F$ and use the inequality\eqref{eq:second_der_lower} for a lower bound on the second derivative. 
\end{proof}

\subsubsection{Proof of Lemma ~\ref{lem:expectation-prob}} \label{subsec:exp-bound}
We begin by proving Lemma~\ref{lem:expectation-prob}.
The idea is that if the total interaction strength of a node
with all of its neighbors is small compared to the variance
of each node, then the behavior is similar to that of independent
samples. To prove that, we employ the following Lemma, the 
proof of which can be found in \cite[Lemma~4.9]{dagan2019learning}, and is also a direct corollary of the earlier paper \cite[Theorem~3.1]{bresler2019stein}.
	\begin{lemma}\label{l:wasser}
		Let $x$ be an $(M,\gamma)$ Ising model with $M < 1$. Fix $i \in [n]$ and let $P_{x_{-i}|x_i=1}$ denote the conditioned distribution over $x_{-i}$ conditioned on $x_i = 1$ and $P_{x_{-i}|x_i=-1}$ denote the conditioned distribution conditioned on $x_i = -1$. Then,
		\[W_1(P_{x_{-i}|x_i=1},P_{x_{-i}|x_i=-1}) \le \frac{2M}{1-M},\]
		where $W_1$ is the $\ell_1$-Wasserstein distance, namely
		$
		W_1(P,Q)=\min_{\pi} \E_{(x,y)\sim \pi}[\|x-y\|_1].
		$
		The minimum is taken over all distributions $\pi$ over $\{-1,1\}^n \times \{-1,1\}^n$, such that the marginals are $P$ and $Q$, respectively.
	\end{lemma}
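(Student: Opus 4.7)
The plan is to construct an explicit coupling of the two conditional distributions via coupled restricted Glauber dynamics and exploit the contraction provided by $\|J\|_\infty \le M < 1$. The distribution $P_{x_{-i}\mid x_i = 1}$ is the unique stationary distribution of the Markov chain on $\{-1,1\}^{n-1}$ that, at each step, picks $j \in [n]\setminus\{i\}$ uniformly at random and resamples the $j$-th coordinate from the Ising conditional $\Pr_J[x_j \mid x_{-j}]$ with the $i$-th coordinate fixed to $+1$; analogously for $x_i = -1$. It therefore suffices to produce a joint evolution of these two chains, starting from a common configuration, whose expected $\ell_1$ distance is asymptotically at most $2M/(1-M)$.

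First, I would initialize $X^{(0)} = Y^{(0)}$ at any common configuration of $x_{-i}$ and, at each step, select the \emph{same} random coordinate $j \ne i$ in both chains and then draw $X_j^{(t+1)}$ and $Y_j^{(t+1)}$ under the greedy (maximal-agreement) coupling of their two conditional laws. Let $d_t$ be the Hamming distance between $X^{(t)}$ and $Y^{(t)}$. The key computation is a one-step drift bound: using the standard estimate that for Ising models
\[
\mathrm{TV}\!\left(\Pr_J[x_j\mid x_{-j}],\, \Pr_J[x_j\mid x'_{-j}]\right) \le \sum_{k \ne j} |J_{jk}|\, \mathbf{1}[x_k \ne x'_k]
\]
(which follows from $|\sigma'|\le 1/4$ applied to the logistic form of the conditional), and observing that the $i$-th coordinate is always in disagreement, a sum-swap together with the symmetry of $J$ (so that column sums equal row sums and are bounded by $M$) yields
\[
\E[d_{t+1}\mid X^{(t)},Y^{(t)}] \le d_t - \frac{d_t}{n-1} + \frac{M(d_t+1)}{n-1} = \left(1 - \frac{1-M}{n-1}\right) d_t + \frac{M}{n-1}.
\]

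Iterating this recursion from $d_0 = 0$ gives $\E[d_t] \le \frac{M}{1-M}\bigl(1-(1-(1-M)/(n-1))^t\bigr) \le M/(1-M)$ for every $t$. Since the marginal laws of $X^{(t)}$ and $Y^{(t)}$ converge to $P_{x_{-i}\mid x_i=1}$ and $P_{x_{-i}\mid x_i=-1}$ respectively (each restricted chain is irreducible and aperiodic on a finite state space), extracting a limiting coupling by compactness produces a coupling of the two target distributions whose expected Hamming distance is at most $M/(1-M)$. Converting Hamming distance to $\ell_1$ on $\{-1,1\}^{n-1}$ multiplies by $2$, giving the stated bound $W_1 \le 2M/(1-M)$. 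The main obstacle is the drift bound itself: one must carefully account for the $M/(n-1)$ additive term, which persists because the $i$-th coordinate is never updated yet contributes $|J_{ji}|$ to the coupling failure probability at every step at which $j$ is picked. This is precisely what forces the system to equilibrate at the nonzero fixed point $M/(1-M)$ rather than contracting to $0$.
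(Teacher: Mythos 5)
Your argument is correct, and it is genuinely self-contained, whereas the paper does not prove this lemma at all: it invokes it as a black box, citing Lemma~4.9 of \cite{dagan2019learning} and Theorem~3.1 of \cite{bresler2019stein}. The mechanism behind those citations is the same one you use—contraction of Glauber dynamics under $\|J\|_\infty\le M<1$—but you make it explicit for the pinned chain: the two conditional laws are the stationary distributions of Glauber dynamics on $[n]\setminus\{i\}$ with $x_i$ frozen at $\pm1$ (equivalently, Ising models on $n-1$ nodes with external fields $J_{ji}x_i$), and your greedy coupling with the one-step TV estimate $\mathrm{TV}\le\sum_{k\ne j}|J_{jk}|\mathbf{1}[x_k\ne x_k']$ (from $1$-Lipschitzness of $\tanh$) gives exactly the drift $\E[d_{t+1}\mid\cdot]\le(1-\tfrac{1-M}{n-1})d_t+\tfrac{M}{n-1}$; the additive $M/(n-1)$ term correctly accounts for the permanently disagreeing pinned coordinate, the sum-swap uses the symmetry of $J$ so that column sums are also at most $M$, and iterating from $d_0=0$ gives the fixed point $M/(1-M)$ in Hamming distance, i.e.\ $2M/(1-M)$ in $\ell_1$ after the factor-of-two conversion on $\{-1,1\}^{n-1}$. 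Passing to the limit via compactness of the finite state space (or continuity of $W_1$ under total-variation convergence of the time-$t$ marginals) is legitimate. What your route buys is transparency and a slightly sharper view of the hypotheses: the proof uses only $\|J\|_\infty\le M<1$, zero diagonal, and symmetry of $J$—the parameter $\gamma$ plays no role—while the paper's citation route buys brevity and access to the more general Stein-method machinery of \cite{bresler2019stein}, which yields such Wasserstein bounds for broader classes of stationary distributions.
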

This lemma essentially tells us that if $M$ is small enough,
then changing the spin of a single node is unlikely to influence
the remaining ones. 
Using this fact, we can proceed as in the proof sketch of Section~\ref{s:strong_conv} to prove the claim.
To prove it in the general case, we employ once more the
conditioning trick to reduce the Dobrushin constant
of the model. Specifically, Lemma~\ref{lem:ising-subsample} guarantees
that we can find a subset $I$ of nodes that
are conditionally very weakly dependent, while still
maintaining a constant fraction of the total variance
of the linear function.
The details are given below.
\begin{proof}[Proof of Lemma~\ref{lem:expectation-prob}]
	Fix $a \in \mathbb{R}^d$. 
	We first assume that $M \le \gamma/4$, and then we prove it for all $M$. We start by arguing that for all $i\in [n]$, 
	$$
	\sum_{j \in [n]\setminus \{i\}}
	|\mathrm{Cov}(x_i,x_j)|
	\le \frac{M}{1-M}.
	$$
	The strategy will be to bound the Wasserstein
	distance between two Ising models
	conditional on different values of a single node.
    Using a tensorization argument, we have that if $X=(X_1,\dots, X_n), Y=(Y_1,\dots,Y_n)$ are random vectors over the same domain, then $W_1(X,Y) \ge \sum_{i \in [n]} W_1(X_i, Y_i)$. Applying this on the distributions $P_{x_{-i} | x_i=1}$ and $P_{x_{-i} | x_i=-1}$, we derive
    by Lemma~\ref{l:wasser} that
	\begin{align*}
	\sum_{j \in [n]\setminus \{i\}}
	\lp|\E[x_j | x_i=1] - \E[x_j | x_i=-1]\rp|
	&=\sum_{j \in [n]\setminus \{i\}} W_1(P_{x_j | x_i=1}, P_{x_j|x_i=-1})\\
	&\le W_1(P_{x_{-i} | x_i=1}, P_{x_{-i} | x_i=-1})
	\le \frac{2M}{1-M},
	\end{align*}
	where we also used the easily established property that for any two random
	variables $U,V$ supported in a set of size $2$,
	$W_1(U,V) = |\E[U] - \E[V]|$.
	We will use the above bound to get a bound on $\mathrm{Cov}(x_i,x_j)$. Indeed,
	\begin{align*}
	\mathrm{Cov}(x_i,x_j)&=
	\E[(x_i-\E x_i)x_j]
	= \E_{x_i}[(x_i-\E x_i)\E_{x_j}[x_j|x_i]]\\
	&= \Pr[x_i = 1] (1 - \E x_i) \E[x_j| x_i=1]
	+\Pr[x_i = -1] (-1 - \E x_i) \E[x_j| x_i=-1]\\
	&= \frac{1}{2}(1+\E[x_i]) (1 - \E x_i) \E[x_j| x_i=1]
	+\frac{1}{2}(1-\E[x_i]) (-1 - \E x_i) \E[x_j| x_i=-1] \\
	&= \frac{1}{2}(1 - \E [x_i]^2)(\E[x_j| x_i=1]- \E[x_j| x_i=-1]).
	\end{align*}
	Combining the above inequalities, we obtain that
	\[
	\sum_{j \in [n]\setminus \{i\}}
	|\mathrm{Cov}(x_i,x_j)|
	\le \frac{\sum_{j \in [n]\setminus \{i\}}
	|\E[x_j| x_i=1]- \E[x_j| x_i=-1]|}{2}
	\le \frac{M}{1-M}.
	\]
	To conclude the proof for the setting where $M \le \gamma/4$:
	\begin{align*}
	\Var(a^\top x)
	&= \E[a^\top (x-\E x) (x-\E x)^\top a]
	= \sum_{i,j} a_{ij} \mathrm{Cov}(x_i,x_j)
	\ge \sum_{i=1}^n a_i^2 \Var(x_i)
	- \sum_{i \ne j} |a_i a_j \mathrm{Cov}(x_i,x_j)| \\
	&\ge \sum_{i=1}^n a_i^2 \Var(x_i)
	- \sum_{i \ne j} \frac{(a_i^2 + a_j^2)|\mathrm{Cov}(x_i,x_j)|}{2}
	= \sum_{i =1}^n a_i^2 \left( \Var(x_i) - \sum_{j \ne i} |\mathrm{Cov}(x_i,x_j)| \right)\\
	&\ge \sum_{i =1}^n a_i^2 \left( \gamma - \frac{M}{1-M} \right)
	\ge \|a\|_2^2 \frac{\gamma}{2},
	\end{align*}
	where we used the fact that $M \le \gamma/4 < 1/2$, which implies that $M/(1-M) \le 2M \le \gamma/2$.
	
	In order to prove the Lemma without the above assumption, we again use the conditioning trick.
	Specifically, we find a subset of the nodes
	that satisfies Dobrushin's condition with a small
	enough $M$ and then apply our result. 
	To make this formal, notice that by Lemma~\ref{lem:ising-subsample}, there exists a subset $I$
	of nodes such that conditioned on any $x_{-I}$, $x_I$ is a $(\gamma/4,\gamma)$-Ising model. Moreover, it is guaranteed that 
	$$\sum_{i \in I} a_i^2 \ge \frac{c'\gamma}{M}\sum_{i \in [n]} a_i^2.
	$$ 
	Hence, if we apply the previous result when
	conditioning on $x_{-I}$ we get
	\[
	\Var[a^\top x | x_{-I}]
	= \Var[a_I^\top x_I | x_{-I}]
	\ge \frac{\gamma \|a_I\|_2^2}{2}
	\ge \frac{c\gamma^2 \|a\|_2^2}{M}.
	\]
	Since conditioning decreases the variance in expectation, we have that
	\[
	\Var[a^\top x]
	\ge \E_{x_{-I}}[\Var[a^\top x | x_{-I}]]
	\ge \frac{c\gamma^2\|a\|_2^2}{M},
	\]
	as required.
\end{proof}

\subsection{Proof of Lemma~\ref{lem:one-element} and Theorem~\ref{t:general}\label{s:additional}}

The goal of this section is to prove the main Theorem~\ref{t:general}. We do so in two steps. First, we use Lemmas~\ref{lem:derivative-one-concentration} and ~\ref{lem:hessian-oneparam} to complete the proof of Lemma~\ref{lem:one-element}.  The strategy here is to use a lower bound on the Taylor approximation of $\phi$ around $J_1$ to prove that it's value will be larger than $J^*$. To prove the lower bound, we utilize the lower bound on $\frac{\partial^2 \phi(J)}{\partial^2 A}$ fro all $J$ and the upper bound of $\frac{\partial \phi(J^*)}{\partial A}$. 
The precise argument is carried out in the following proof.

\oneJ*
\begin{proof}
	Define the function 
	$
	J \colon [0,1] \to \mathbb{R}
	$
	by $J(t) = J^* + t (J_1-J^*)$ such that $J(0)=J^*$ and $J(1)=J_1$. Let $A = J_1-J^*$.
	Notice that
	\[
	\frac{d \varphi(J(t))}{d t}
	= \frac{\partial \varphi(J)}{\partial A}\Bigg|_{J=J(t)};\quad
	\frac{d^2 \varphi(J(t))}{d t^2}
	= \frac{\partial^2 \varphi(J)}{\partial A^2}\Bigg|_{J=J(t)}.
	\]
	Let $\mu = \min_{t \in (0,1)} \frac{d^2 \varphi(J(t))}{d t^2}$ and let $r = \frac{d\varphi(J(t))}{dt}|_{t=0}$. Then, by the fundamental theorem of calculus,
	\[
	\frac{d\varphi(J(t))}{dt}
	\ge r
	+ t \mu.
	\]
	This implies by the fundamental theorem of calculus that
	\[
	\varphi(J(t)) \ge \varphi(J(0)) + tr+ \frac{t^2\mu}{2}.
	\]
	Substituting $t=1$, we derive that
	\[
	\varphi(J_1) \ge \varphi(J^*) + r + \frac{\mu}{2}.
	\]
	
	From Lemma~\ref{lem:hessian-oneparam}, with probability $1-C \log n \exp\left(-c\frac{\|A\|_F^2}{\|A\|_2^2}\right)$, it holds that $\mu \ge c_0\|A\|_F^2 + c_0\max_j \|\E[Ax\mid x_{-I_j}]\|_2^2$. By applying Lemma~\ref{lem:derivative-one-concentration} with $t = O(\|A\|_F)$, we derive that with probability $1-C \log n \exp\left(-c\frac{\|A\|_F^2}{\|A\|_2^2}\right)$, $|r| \le \mu/4$. Whenever these two events hold, we have
	\[
	\varphi(J_1) - \varphi(J^*)
	\ge r + \frac{\mu}{2}
	\ge \frac{-\mu}{4} + \frac{\mu}{2}
	\ge c_0 \|A\|_F^2/4.
	\]
	This holds with probability 
	\[
	1-C \log n \exp\left(-c\frac{\|A\|_F^2}{\|A\|_2^2}\right)
	=1-C \log n \exp\left(-c\frac{\|J_1-J^*\|_F^2}{\|J_1-J^*\|_2^2}\right)
	\]
	and notice that $\|J_1-J^*\|_2 \le \|J_1\|_2 + \|J^*\|_2 \le \|J_1\|_\infty + \|J^*\|_\infty$ which is bounded by a constant.
\end{proof}
To complete the proof of Theorem~\ref{t:general}, we need the bound of Lemma~\ref{lem:one-element} to hold for all points
$J \in \mathcal{J}$ uniformly.
So far we have obtained a bound
for comparing a single $J$ with $J^*$ that holds with probability proportional
to $1 - \log n e^{-cr^2}$. Hence, by taking a union bound over a set
of points of cardinality $N$, we obtain a 
probability concentration bound of the form $1 - N\log n e^{-cr^2}$. 
Since we would like this bound to hold for all points
,
we should choose this set so that it \emph{covers}
the set $\mathcal{J}$.
A natural candidate for such a set is an \emph{$\epsilon$-net}
of $\mathcal{J}$. Intuitively, it is a set of points such that
every $J \in \mathcal{J}$ is close to at least one of
them.
Ideally, if two points are close under some notion of distance, we would like to conclude that the values of $\phi$ on these two points will also be close. This is exactly the content of the following auxiliary Lemma.

\begin{lemma}\label{lem:lip-mple}
	Let $M> 0$ and $J_1, J_2$ be two matrices. Then,
	$$
	|\phi(J_1) - \phi(J_2)| \leq n\|J_1 - J_2\|_2
	$$
\end{lemma}
\begin{proof}
	We have that
	$$\phi(J) = 
	\sum_{i=1}^n (\log\cosh(J_i x) - x_i J_i x + \log 2).
	$$
	We can define the function $g:[0,1]\mapsto \R$ by
	$$
	g(t) = \phi\lp(tJ_1 + (1-t)J_2\rp)
	$$
	Clearly, $g$ is computing the values of $\phi$ across the line segment connecting $J_1, J_2$. Hence, the desired inequality is equivalent to
	$$
	|g(1) - g(0)| \leq n \|J_1 - J_2\|_2
	$$
	Computing the derivative of $g$ will thus give us a suitable bound for this difference. By the calculations done in the previous lemmas, it is clear that
	$$
	g'(t) = \frac{\partial \phi(J)}{\partial A}\Bigg|_{J = tJ_1 + (1-t)J_2}
	$$
	where $A = J_1 - J_2$. We set $J(t) = tJ_1 + (1-t)J_2$. By the calculations done in previous parts of the paper, we have
	$$
	\frac{\partial \phi(J)}{\partial A}\Bigg|_{J = tJ_1 + (1-t)J_2} =
	\frac{1}{2} \sum_{i=1}^n \lp((J_1 - J_2)_i x \rp)\lp(\tanh(J(t)_i x) - x_i\rp)
	$$
	Using the Cauchy-Schwarz inequality, we obtain
	\begin{align*}
	\lp|\sum_{i=1}^n \lp((J_1 - J_2)_i x \rp)\lp(\tanh(J(t)_i x) - x_i\rp)\rp| &\leq \sqrt{\sum_{i=1}^n \lp((J_1 - J_2)_i x \rp)^2 \sum_{i=1}^n \lp(\tanh(J(t)_i x) - x_i\rp)^2}\\ &= 
	\|(J_1 - J_2)x\|_2 \sqrt{\sum_{i=1}^n \lp(\tanh(J(t)_i x) - x_i\rp)^2}
	\end{align*}
	Since the $\tanh$ funtion is bounded in $[-1,1]$, we have
	$$
	\sqrt{\sum_{i=1}^n \lp(\tanh(J(t)_i x) - x_i\rp)^2} \leq 2\sqrt{n}
	$$
	Also, by the infinity norm bound of $J_1,J_2$ we have
	$$
	\|(J_1 - J_2)x\|_2 \leq \|J_1-J_2\|_2 \|x\|_2 =  \sqrt{n}\|J_1-J_2\|_2
	$$
	We conclude that for any $t \in [0,1]$
	$$
	|g'(t)| \leq \frac{1}{2} 2\sqrt{n}\sqrt{n}\|J_1-J_2\|_2 =
	n\|J_1-J_2\|_2
	$$
	By the mean value theorem, this implies
	$$
	|g(1) - g(0)| \leq n \|J_1 - J_2\|_2
	$$
	and the claim is complete.
	
\end{proof}

We now use the preceding observations to conclude the proof of Theorem~\ref{t:general}. The idea is to consider the set of points $\mathcal{A}_r$ whose distance from $J^*$ is at least $r$ in Frobenius norm. We would like to show that all of these points will have a value of $\phi$ that is $r$ larger than $J^*$ with high probability.
To do this, we find an $r/n$-net of $\mathcal{A}_r$.
By taking a union bound over this set, we ensure that
the concentration bound will hold for all points in this set. Also, by the Lipschitzness argument, this implies that this holds for
all points in $\mathcal{A}_r$. The failure probability is thus $N(r/n)\log n e^{-cr^2}$. We choose an $r$ small enough to make this probability smaller than $\delta$ and this gives us the final guarantee.

\general*
\begin{proof}
	Let $r>0$.
	The idea is to find an $\epsilon$-cover $\mathcal{N}$ for the set of elements $\mathcal{J}_{\ge r} := \{J \in \mathcal{J} \colon \|J^*-J\|_F \ge r \}$. Then, apply Lemma~\ref{lem:one-element} for each element in this cover and argue by a union bound, that with high probability all elements in $\mathcal{N}$ exceed $J^*$ in the cost function $\varphi$. Then, use Lipschitzness of this function $\varphi$ to generalize from the net to $\mathcal{J}_{\ge r}$.
	
	Let $\epsilon = c'r^2/(2n)$ where $c'$ is the constant from Lemma~\ref{lem:one-element}, and we take $\mathcal{N}$ to be an $\epsilon$-cover of $\mathcal{J}_{\ge r}$ with respect to the matrix norm $\|\cdot\|_2$. 
	
	By applying Lemma~\ref{lem:one-element}, each $J \in \mathcal{N}$ satisfies with probability $1-C\log n \exp(-cr^2)$,
	\[
	\varphi(J) \ge \varphi(J^*)+c'r^2.
	\]
	By a union bound over the net, with probability at least $1- |\mathcal{N}|\log n \exp(-cr^2)$, all these points satisfy the above inequality. 
	
	Next, assume that the above holds and we generalize from the cover to all of $\mathcal{J}_{\ge r}$.
	Take any point $J \in \mathcal{J}_{\ge r}$ and let $J_0$ be any point in $\mathcal{J}$ that satisfies $\|J_0-J\|_2 \le \epsilon$. Then, we have by Lemma~\ref{lem:lip-mple} that
	\[
	\varphi(J) \ge \varphi(J_0) - n \|J-J_0\|_{2}
	\ge \varphi(J_0) - c'r^2/2
	\ge \varphi(J^*) +c'r^2/2.
	\]
	
	The failure probability is
	\[
	|\mathcal{N}| \log n \exp(-cr^2)
	= N(\mathcal{J}_{\ge r}, \|\cdot\|_2,c'r^2/(2n))\log n \exp(-cr^2)
	\le N(\mathcal{J}, \|\cdot\|_2,c'r^2/(4n))\log n \exp(-cr^2),
	\]
	using the fact that for any sets $\mathcal{U} \subseteq \mathcal{U}$, any norm $\|\cdot\|$ and any $\epsilon>0$, it holds that $N(\mathcal{U}',\|\cdot\|,\epsilon) \le N(\mathcal{U},\|\cdot\|,\epsilon/2)$.	
	
	Let $N = N(\mathcal{J}, \|\cdot\|_2,c'r^2/(4n))$ and notice that $r$ has to be sufficiently large such that 
	\[
	N \log n e^{-cr^2} \le \delta,
	\]
	which holds whenever
	\[
	\log N + \log \log n - cr^2 \le \log \delta
	\]
	or 
	\[
	r \ge C\sqrt{\log \log n + \log N + \log(1/\delta)}.
	\]
\end{proof}
\section{Applications}\label{s:applications}
We begin with applications for learning from one sample, and then move to multiple samples.

\subsection{Applications for learning from one sample}

\subsubsection{A collection of finite candidates}
Assume that $\mathcal{J}$ is a finite set. Then, we have $N(\mathcal{J},\|\cdot\|_2,0) = |\mathcal{J}|$. In particular, we immediately obtain the bound of Corollary~\ref{cor:finite}:
\[
\|\hat{J}-J^*\|_F \le C \sqrt{\log|\mathcal{J}| + \log(1/\delta)+\log \log n}.
\]

\subsubsection{A Euclidean subspace of matrices}

In this section, we assume that the matrix $J^*$ is a linear combination of $k$ known candidate matrices, $J_1,\dots,J_k$, and further, that it has bounded infinity norm. In particular, define $\mathcal{J} = \{J:\sum_{i=1}^k \beta_i J_i, \|J\|_\infty \leq M, \beta_i \in \R\}$. Using standard arguments, we can obtain bounds on the covering numbers of this set:

\begin{lemma}\label{lem:subspace-cover}
	Let $\mathcal{J} = \{J:\sum_{i=1}^k \beta_i J_i, \|J\|_\infty \leq M, \beta_i \in \R\}$, where $J_1,\ldots,J_k$ are known matrices. Then
	$$
	N(\mathcal{J}, \|\cdot\|_\infty, \epsilon) \leq \lp(1 + \frac{2M}{\epsilon}\rp)^k
	$$
\end{lemma}
\begin{proof}
	Let $B(x,r)$ be the ball of center $x$ and radius $r$ w.r.t. $\|\cdot\|_\infty$ in the subspace spanned by $J_1,\ldots,J_k$.
	Then the set $\mathcal{J}$ is equal to $B(0,M)$. Let $|V|$ denote the volume of a subset $V$ of this $k$-dimensional subspace. Suppose we cover $B(0,M)$ in the usual greedy way: each time we choose a
	ball $B(x,\epsilon/2)$ that is disjoint with the previous balls, where $x \in B(M)$  and we add it to the cover, until we cannot add any more balls. Let $N$ be the number of centers selected using this process. Then clearly any point $x$ in $B(0,M)$ is within $\epsilon$ distance from some center(otherwise the ball $B(x,\epsilon/2)$ could be added in the cover). Hence, these centers form an $\epsilon$ net of $B(0,M)$. Moreover, notice that all the selected balls are disjoint and fit inside the ball $B(0,M + \epsilon/2)$. Hence, their total volume is at most $|B(0,M+\epsilon/2)$. It follows that
	$$
	N|B(0,\epsilon/2)| \leq |B(0,M+\epsilon/2)|
	$$
	By scaling we know that \begin{align*}
	&|B(0,\epsilon/2)| =  \lp(\frac{\epsilon}{2}\rp)^k |B(0,1)|\\
	&|B(0,M + \epsilon/2)| =  \lp(M + \frac{\epsilon}{2}\rp)^k |B(0,1)|
	\end{align*}
	Notice that it doesn't matter which norm defines the balls. As long as the balls scale exponentially with the dimension, the volume does so too.
	
	Consequently, we have that
	$$
	N \leq \frac{|B(0,M+\epsilon/2)|}{|B(0,\epsilon/2)|} = \lp(\frac{M + \epsilon/2}{\epsilon/2}\rp)^k = \lp(1 + \frac{2M}{\epsilon}\rp)^k
	$$
	
\end{proof}

By applying Theorem~\ref{t:general}, we immediately obtain Corollary~\ref{cor:linear} :
\linsubspace*
\begin{proof}
	For the statistical quarantee, apply Theorem~\ref{t:general}, using the covering numbers from Lemma~\ref{lem:subspace-cover}. The optimum can be found in polynomial time due to Lemma~\ref{lem:optimization}.
\end{proof}

\subsubsection{Sparse estimation}

Here, we assume that the true interaction matrix is a linear combination of just a few matrices in the subspace. 
Hence, we are dealing with $\ell_0$ sparsity.
Denote for any $\vec{\beta} \in \mathbb{R}^k$ by $\|\vec{\beta}\|_0$ the number of nonzero coordinates of $\vec{\beta}$. The goal is to have a learning rate that is dominated by the number of nonzero components of $\beta$. This is precisely the content of the following claim.
\sparse*
\begin{proof}
To prove the statistical guarantee, we need to bound the covering number of the set $\mathcal{J}$. To do that, we use the fact that $\mathcal{J}$ is a union of $\binom{n}{s}$ collections of matrices, each of them is an intersection of some $s$-dimensional subspace with the set of matrices of bounded infinity norm. By Lemma~\ref{lem:subspace-cover} the $\epsilon$ covering number of each such set $\mathcal{J}'$ is bounded by $N(\mathcal{J}', \|\cdot\|_\infty, \epsilon) \leq \lp(1 + \frac{2M}{\epsilon}\rp)^s$. Taking the union over the $\epsilon$ coverings over all the $\binom{k}{s}$ collections, we get a covering for $\mathcal{J}$ of size at most $\lp(1 + \frac{2M}{\epsilon}\rp)^s \binom{k}{s}$. Hence, by plugging this number into the general rate of Theorem~\ref{t:general} we immediately obtain the result.

For the analysis of the runtime, notice that in order to optimize the MPLE, one has to iterate over all $\binom{k}{s}$ subspaces of dimension $s$ and find the minimizer in each of those, each in polynomial time due to Corollary~\ref{cor:linear}.
\end{proof}
\if 0
\subsubsection{$\ell_1$ sparsity}

Next, we study the $\ell_1$ sparse setting, and we have the following statement:
\begin{corollary}
	Let $M>0$, let $J_1,\dots,J_k$ denote matrices with $\|J_i\|_\infty \le 1$. Define $\J = \{ J = \sum_{i=1}^k \beta_i J_i \colon \|\vec{\beta}\|_1 \le M \}$. Then,
	\[
	\|\hat{J}-J^*\|_F \le C(M)\lp(n^{1/4}\sqrt{\log k + \log(1/\delta)}\rp).
	\]
	and $\hat{J}$ is computed in time $\poly(n,k)$.
\end{corollary}

To prove the above corollary we need to apply a bound on the covering numbers of $\J$. We view $\J$ as a high dimensional manifold. Let $B_1(M) = \{ v\in\mathbb{R}^k \colon \|v\|_1 \le M \}$ and define $h \colon B_1(M) \to M_{n\times n}(\mathbb{R})$. Notice that $\J = \{ h(v) \colon v \in B_1(M) \}$. As in Section~\ref{sec:manifolds}, we can bound the covering numbers of $\J$ in terms of those of $B_1(M)$ and the Lipschitz constant of $h$. To estimate the Lipschitz constant, notice that
\[
\|h(v) - h(u)\|_2
= \| \sum_{i=1}^k V-U\|_F
\]
\fi

\subsubsection{High dimensional manifolds}\label{sec:manifolds}

There are settings where the interaction matrix lies in a nonlinear space. One such case is when the space is
 a $k$-dimensional manifold of matrices. In this case, we have some continuous mapping $h \colon D \to M_{n\times n}(\mathbb{R})$ where $D \subseteq \mathbb{R}^k$ is some domain and $\mathcal{J} = \{ h(x) \colon x \in D, \|h(x)\|_\infty \le M \}$.
We also assume that $h$ is Lipschitz. 
With these assumption, we can obtain the following guarantee.
\manifold*
\begin{proof}
Since $h$ satisfies  $\|h(x)-h(y)\|_2 \le L \|x-y\|$ for some $L>0$,
 we have that for any set $D \subseteq \R^k$:
 $$N(h(D), \|\cdot\|_2,\epsilon) \le N(D, \|\cdot\|,\epsilon/L).$$
  In particular, if $D = [0,1]^k$ and $\|\cdot \| = \|\cdot\|_\infty$ is the infinity norm over vectors in $\mathbb{R}^k$, then $N(D,\|\cdot\|_\infty, \epsilon) \le \epsilon^{-k}$. Hence, by a direct application of Theorem~\ref{t:general} we derive the following bound with probability $1-\delta$:
\[
\|\hat{J}-J^*\|_F \le C\sqrt{k \log(Ln) + \log(1/\delta)}.
\]
\end{proof}
\subsection{Multiple independent samples}

In this Section, we assume that we have access to multiple samples, either independent or correlated. An application of  Theorem~\ref{t:general} gives us nontrivial bounds.

\subsubsection{A general statement for independent samples}
We start by giving a general bound for learning from $\ell$ independent samples. The following is a slightly more general formulation of Corollary~\ref{cor:multiple-samples}.

\begin{corollary}\label{cor:iid-formal}
	Let $\J$ be a set of interaction matrices of infinity norm bounded by $M$. Assume that $\ell$ independent samples are obtained from $\Pr_{J^*}$ for some $J^*\in \J$. There is an estimator $\hat{J}$ for $J^*$ such that for all $\delta>0$, with probability $1-\delta$,
	\begin{align*}
	\|\hat{J}-J^*\|_F &\le 
	\frac{1}{\sqrt{\ell}}\min \lp\{r \ge 0 \colon r \ge C(M)\sqrt{\log N\lp(\mathcal{J}, \|\cdot\|_2, \frac{cr^2}{n\ell}\rp)
		+\log\log n + \log(1/\delta)}\rp\}\\
	&\le \frac{C(M)}{\sqrt{\ell}} \sqrt{\log N\lp(\mathcal{J}, \|\cdot\|_2, \frac{1}{n\ell}\rp)
		+\log\log n + \log(1/\delta)}.
	\end{align*}
\end{corollary}
\begin{proof}
In the setting of $\ell$ independent samples, we can view the concatenation of $\ell$ independent $n$-bit samples as a single $n\ell$-bit sample. Given an interaction matrix $J$ of size $n\times n$, denote by $h(J)$ the interaction matrix of size $n\ell\times n\ell$ that corresponds to the joint distribution over $\ell$ samples. Formally, $h(J)$ is block-diagonal, that contains $\ell$ copies of $J$ in the diagonal and zeros otherwise: $h(J)_{i+nj,i'+nj} = J_{i,i'}$ for any $i,i'\in [n]$ and $j \in \{0,\dots, n-1\}$ and $h(J)_{i,j} = 0$ otherwise. We create the collection of interaction matrices that correspond to the joint distributions over $\ell$-independent samples by $\J^\ell=\{h(J) \colon J \in \J\}$. In order to learn $J^*\in \J$ from $\ell$ samples, we will instead view it as learning a matrix $J^{\ell*} \in \J^\ell$ using a single sample. We will use this to get an estimate for $J^*$.

First of all, we bound the error of learning a matrix from $J^\ell$. Start by noticing that $\|h(J)\|_\infty = \|J\|_\infty$, hence, assuming that $\|J\|_\infty \le M$ for all $J \in \J$, we derive that $\|J^\ell\|_\infty \le M$ for all $J^\ell \in \J^\ell$. Further, $\|h(J) - h(J')\|_2 = \|J-J'\|_2$ hence for any $\epsilon > 0$, we have $N(\J,\|\cdot\|_2,\epsilon) = N(\J^\ell, \|\cdot\|_2, \epsilon)$. Let $\hat{J}^\ell$ denote the estimator of $J^{\ell*} \in \J^\ell$ and the learning rate for learning rate is bounded by
\begin{align*}
\|\hat{J}^\ell-J^{\ell*}\|_F
&\le\min \lp\{r \ge 0 \colon r \ge C(M)\sqrt{ \log N\lp(\mathcal{J}^\ell, \|\cdot\|_2, \frac{cr^2}{n\ell}\rp) + \log\log n + \log (1/\delta)}\rp\}\\
&= \min \lp\{r \ge 0 \colon r \ge C(M)\sqrt{\log N\lp(\mathcal{J}, \|\cdot\|_2, \frac{cr^2}{n\ell}\rp)
+\log\log n + \log(1/\delta)}\rp\}.
\end{align*}
Lastly, notice $\|h(J)-h(J')\|_F = \sqrt{\ell}\|J-J'\|_F$, hence if we define $\hat{J} = h^{-1}(J^\ell)$ then we derive that
\[
\|\hat{J}-J^*\|_F \le \frac{1}{\sqrt{\ell}}\min \lp\{r \ge 0 \colon r \ge C(M)\sqrt{\log N\lp(\mathcal{J}, \|\cdot\|_2, \frac{cr^2}{n\ell}\rp)
	+\log\log n + \log(1/\delta)}\rp\}.
\]
This proves Corollary~\ref{cor:iid-formal}.
\end{proof}
\subsubsection{Estimating the complete matrix from independent samples}

Assume now that nothing is known about the matrix $J$, we have $\mathcal{J} = \{J \colon \|J\|_\infty \le M\}$. Then, $\mathcal{J}$ is an intersection of a vector space of dimension bounded by $n^2$ with the set of matrices of bounded infinity norm. We can apply Corollary~\ref{cor:iid-formal} in combination with the covering number bound of Lemma~\ref{lem:subspace-cover} to derive that
\[
\|J^*-\hat{J}\|_F \le C(M)\sqrt{n^2\log (n\ell)+\log(1/\delta)},
\]
where $\ell$ is the number of samples. Since the optimization is over a linear space, we derive a polynomial time algorithm. Hence, we proved the following.
\multinoassump*

\subsubsection{Multiple samples with dependencies}
We now extend the previous results in the case where the $\ell$ samples are not independent.
Specifically, assume that one obtains $\ell$ samples, but instead of being independent, they have some dependency structure. We show how to formulate this as a one-sample learning problem. For the sake of presentation, we limit the discussion to time-series dependencies. However, similar arguments can handle general dependency structures. Assume that $x^1,\dots,x^\ell$ are $n$-bit vectors, let $J_0,J_1$ denote two $n\times n$ matrices and define the joint distribution over $x^1,\dots,x^\ell$ by
\[
\Pr_{J_0,J_1,\ell}\lp[x^1\cdots x^\ell\rp] \propto 
\prod_{t=1}^\ell \exp\lp(-(x^t)^\top J_0 x^t/2\rp) \prod_{t=1}^{\ell-1} \exp\lp(-(x^t)^\top J_1 x^{t+1}\rp).
\]
Here, $J_0$ is the interaction matrix that controls each sample $x^t$ and $J_1$ controls the interaction between $x^t$ and $x^{t+1}$. We would like to present this distribution as an Ising model on $n\ell$ random bits. The joint interaction matrix, that we denote by $h(J_0,J_1)$, is a block matrix with $\ell$ copies of $J_0$ on the diagonal and $\ell-1$ copies of $J_1$ on each of the sub-diagonals.
We would like to estimate $J_0^*,J_1^*$ from $\ell$ dependent samples, under the assumption that $J_0^* \in \J_0$ and $J_1^* \in \J_1$ for some collections of interaction matrices of bounded infinity norm. The details are as follows. 
\begin{corollary}\label{cor:weakmulti}
	Let $M>0$, let $\ell \ge 2$, let $\J_0$ and $\J_1$ be collections of interaction matrices of infinity norm bounded by $M$ and let $(x^1,\dots,x^\ell)\sim \Pr_{J_0^*,J_1^*,\ell}$ for some $J_0^*\in \J_0$ and $J_1^*\in\J_1$. Then, there exists an estimator $(\hat{J}_0,\hat{J}_1)$ that satisfies w.p. $1-\delta$,
	\begin{align*}
	&\max(\| J_0^*-\hat{J}_0\|_F, \|J_1^*-\hat{J}_1\|_F)
	\le \\
	&\frac{1}{\sqrt{\ell}} \min \lp\{r \ge 0 \colon r \ge C(M)\sqrt{\log N\lp(\mathcal{J}_0, \|\cdot\|_2, \frac{cr^2}{n\ell}\rp)
		+ \log N\lp(\mathcal{J}_1, \|\cdot\|_2, \frac{cr^2}{n\ell}\rp)
		+\log\log n + \log(1/\delta)}\rp\}\\
	&\le \frac{C(M)}{\sqrt{\ell}} \sqrt{\log N\lp(\mathcal{J}_0, \|\cdot\|_2, \frac{1}{n\ell}\rp)
		+ \log N\lp(\mathcal{J}_1, \|\cdot\|_2, \frac{1}{n\ell}\rp)
		+\log\log n + \log(1/\delta)}.
	\end{align*}
\end{corollary}
\begin{proof}
 Let $\J_{01} = \{ h(J_0,J_1) \colon J_0\in \J_0, J_1\in \J_1\}$ and we will reduce to the one-sample problem of estimating $J_{01}^* \in \J_{01}$. We start by computing some properties of $\J_{01}$. First of all, it has matrices of bounded infinity norm. Indeed, it is easy to see that $\|h(J_0,J_1)\|_\infty \le \|J_0\|_\infty + 2\|J_2\|_\infty$. Secondly, we bound the covering numbers of $\J_{01}$. First, we bound the spectral norms of matrices $h(J_0,J_1)$ in terms of those of $J_0$ and $J_1$. Notice that for any $n\ell$ dimensional vector $\vec{u}$ denoted by $(u^t_i)_{t\in [\ell],i\in[n]}$ and we have
\begin{align*}
\vec{u}^\top h(J_0,J_1)\vec{u}
&= \sum_{t=1}^\ell (u^t)^\top J_0 u^t
+ \sum_{t=1}^{\ell-1} (u^t)^\top J_1 u^{t+1}
\le \sum_{t=1}^\ell \|u^t\|_2^2 \|J_0\|_2
+ \sum_{t=1}^{\ell-1} \|(u^t)\|_2 \|J_1\|_2 \|u^{t+1}\|_2\\
&\le \|J_0\|_2 \sum_{t=1}^\ell \|u^t\|_2^2 
+ \|J_1\|_2 \sum_{t=1}^{\ell-1} (\|(u^t)\|_2^2+ \|u^{t+1}\|_2^2)/2
\le (\|J_0\|_2 + \|J_1\|_2) \|\vec{u}\|_2^2.
\end{align*}
where we used the arithmetic and geometric means inequality. This implies that $\|h(J_0,J_1)\|_2 \le \|J_0\|_2 + \|J_1\|_2$. Since $h$ is a linear function, we have that for any $J_0,J_1,J_0',J_1'$, $\|h(J_0,J_1)-h(J_0',J_1')\|_2 = \|h(J_0-J_0',J_1-J_1')\|_2 \le \|J_0-J_0'\|_2 + \|J_1-J_1'\|_2$. This implies that for any $\epsilon > 0$, one has
\[
N(\J_{01},\|\cdot\|_2,\epsilon)
\le N(\J_{0},\|\cdot\|_2,\epsilon/2) N(\J_{1},\|\cdot\|_2,\epsilon/2).
\]
Indeed, given an $\epsilon/2$-cover $\mathcal{N}_0$ for $\J_0$ and $\mathcal{N}_1$ for $\J_1$, we can take $\mathcal{N}_{01} = \{ h(J_0,J_1) \colon J_0\in \mathcal{N}_0,J_1 \in \mathcal{N}_1 \}$, and it forms an $\epsilon$-cover for $\J_{01}$. We derive that $J_{01}^*\in \J_{01}$ can be estimated with an error of
\begin{align*}
&\|J_{01}^*-\hat{J}_{01}\|_F\le\\
&\min \lp\{r \ge 0 \colon r \ge C(M)\sqrt{\log N\lp(\mathcal{J}_0, \|\cdot\|_2, \frac{cr^2}{n\ell}\rp)
+ \log N\lp(\mathcal{J}_1, \|\cdot\|_2, \frac{cr^2}{n\ell}\rp)
	+\log\log n + \log(1/\delta)}\rp\}.
\end{align*}
Next, we translate back to guarantees on estimating $J_0^*$ and $J_1^*$. Notice that 
\[
\|h(J_0,J_1) - h(J_0',J_1')\|_F^2 
= \ell \|J_0-J_0'\|_F^2 + 2(\ell-1)\|J_1-J_1'\|_F^2.
\]
This immediately implies the rate given in the statement.
\end{proof}

\section{The Conditioning Trick}\label{s:condition}
The main purpose of this section is to prove the following Lemma, which is used multiple times in the proof of Theorem~\ref{t:general}.

\lemsubsample*
Intuitively, this lemma says that we can select a small number of subsets,
such that each element $i \in [n]$ is contained in at least
a constant fraction of these sets. 
At the same time, we want the variables in these subsets
to be weakly dependent when we condition on the rest.
The proof relies on the following technical lemma.
\begin{lemma}\label{l:aux}
	Let $A$ be a matrix with zero on the diagonal, and $\|A\|_\infty \le 1$. Let $0< \eta < 1$. Then, there exist subsets $I_1, \dots, I_\ell \subseteq [n]$, with $\ell \le C \log n/\eta^2$ such that:
	\begin{enumerate}
		\item For all $i \in [n]$, 
		\begin{equation}\label{eq:pr1}
		    |\{j \in [\ell] \colon i \in I_j\}| = \lceil \eta\ell/8 \rceil
		\end{equation}
		\item For all $j \in [\ell]$ and all $i \in I_j$, \begin{equation}\label{eq:pr2}
		    \sum_{k \in I_j} |A_{ik}| \le \eta
		\end{equation}
	\end{enumerate}
\end{lemma}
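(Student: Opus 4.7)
The plan is to use the probabilistic method, generating the sets $I_1,\dots,I_\ell$ at random and then performing a monotone pruning step. Initialize by including each $i \in [n]$ in each $I_j$ independently with probability $p = \eta/4$. For each $j$, identify and remove from $I_j$ the ``bad'' indices
\[
B_j := \left\{i \in I_j : \sum_{k \in I_j} |A_{ik}| > \eta\right\}.
\]
Since deleting any index from $I_j$ can only decrease $\sum_{k \in I_j} |A_{i'k}|$ for every remaining $i' \in I_j$, Condition~(2) automatically holds for the pruned sets, and crucially will continue to hold under any further deletions we perform.

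The main remaining task is to show that, with positive probability, after pruning every $i$ still belongs to at least $\lceil \eta\ell/8\rceil$ of the sets; we can then arbitrarily remove additional memberships to reach exactly $\lceil\eta\ell/8\rceil$, establishing Condition~(1) without disturbing Condition~(2). To this end, observe that the inclusion of $i$ into $I_j$ is independent of the inclusions of the other $k \ne i$, which alone determine the sum $\sum_{k \in I_j,\, k\ne i} |A_{ik}|$ (using $A_{ii}=0$). Markov's inequality therefore gives
\[
\Pr[i \in B_j] \;\le\; p \cdot \Pr\left[\sum_{k\ne i} |A_{ik}|\,\mathbf{1}(k \in I_j) > \eta\right] \;\le\; p \cdot \frac{p\,\|A\|_\infty}{\eta} \;\le\; \frac{p^2}{\eta} \;=\; \frac{\eta}{16}.
\]
Moreover the events $\{i \in B_j\}_{j \in [\ell]}$ are independent across $j$ because different $I_j$'s use disjoint coin flips. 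A standard Chernoff bound then shows that, with probability $1 - e^{-\Omega(\eta\ell)}$, the bad count $\sum_j \mathbf{1}(i \in B_j)$ is at most $\eta\ell/8$, while the initial count $N_i \sim \mathrm{Binomial}(\ell, \eta/4)$ is at least $\eta\ell/5$; so the post-pruning count is at least $\eta\ell/5 - \eta\ell/8 > \lceil\eta\ell/8\rceil$ whenever $\ell$ is even moderately large.

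A union bound over $i \in [n]$ requires the failure probability per element to be $o(1/n)$; by Chernoff this holds once $\eta\ell \ge C'\log n$, which is easily accommodated by the allowed range $\ell \le C\log n/\eta^2$ (in fact $\ell = \Theta(\log n/\eta)$ already suffices, but we are free to take $\ell$ larger). The main technical subtlety is the choice of the inclusion probability $p$: if $p$ is too small then $N_i$ is not concentrated sufficiently above $\lceil\eta\ell/8\rceil$, while if $p$ is too large then $\Pr[i \in B_j] \le p^2/\eta$ grows and pruning erodes too much of $N_i$. The value $p = \eta/4$ (or any sufficiently small constant fraction of $\eta$) balances the two effects, and the monotonicity of pruning lets us enforce the exact count in Condition~(1) without any delicate coupling argument.
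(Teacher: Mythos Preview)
Your approach is essentially the same as the paper's: draw each $I_j$ by including elements independently with probability $\Theta(\eta)$, prune the indices that violate the row-sum bound, then use concentration plus a union bound to guarantee each $i$ survives in enough sets, and finally trim memberships down to exactly $\lceil\eta\ell/8\rceil$. The paper uses inclusion probability $\eta/2$ and directly bounds $\Pr[i \in I_j \text{ after pruning}] \ge \eta/4$ via Markov, then applies Hoeffding to the post-pruning count $S_i$; you instead use $p=\eta/4$ and bound the initial count and the bad count separately.

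There is, however, a concrete arithmetic slip in your subtraction step: you assert that the post-pruning count is at least $\eta\ell/5 - \eta\ell/8 > \lceil\eta\ell/8\rceil$, but in fact $\tfrac{1}{5}-\tfrac{1}{8}=\tfrac{3}{40}<\tfrac{1}{8}$, so the inequality goes the wrong way. The fix is easy and does not require changing the strategy: rather than bounding $N_i$ and $\sum_j \mathbf{1}(i\in B_j)$ separately and subtracting, observe that the events $\{i \in I_j\setminus B_j\}_{j\in[\ell]}$ are independent across $j$ with $\Pr[i\in I_j\setminus B_j] = p - \Pr[i\in B_j] \ge \eta/4 - \eta/16 = 3\eta/16$, and apply a single multiplicative Chernoff bound to this sum directly (exactly as the paper does). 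With mean $\ge 3\eta\ell/16$ the lower tail below $\eta\ell/8$ has probability $e^{-\Omega(\eta\ell)}$, and your union bound goes through. Your parenthetical remark that $\ell=\Theta(\log n/\eta)$ already suffices is in fact correct via multiplicative Chernoff, and is sharper than the paper's Hoeffding-based $\ell=\Theta(\log n/\eta^2)$.
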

This Lemma lies in the heart of the proof. It essentially 
ensures that we can select subsets of nodes that conditionally
satisfy Dobrushin's condition with an arbitrarily small constant. It also shows that these subsets can cover all
the nodes. Thus, it allows breaking up a sum over all the nodes to a sum over all these subsets with only a logarithmic error factor. The proof is an application of the probilistic method, using a simple, but strong technique found in \cite{alon2004probabilistic}. 

\begin{proof}[Proof of Lemma~\ref{l:aux}]
The proof uses the \emph{probabilistic method}. To prove that
there exists an object with a specific property, 
we find a way to select the object at random so that with
positive probability the property is satisfied. 
In our case, the object is the collection of subsets $I_1,\ldots,I_\ell$. The properties are ~\ref{eq:pr1} and ~\ref{eq:pr2}. 

We first define a way to select these subsets at random.
This is done with the following sampling procedure:
\begin{algorithm}
\caption{Sampling Procedure}
\begin{algorithmic}
\FOR {j := $1,\ldots,\ell$}
\STATE Draw a random set $I_j'$ of coordinates (a subset of the set of all coordinates $[n]$), where each $i \in [n]$ is selected to be in $I_j'$ independently with probability $\frac{\eta}{2}$.
\STATE Set $I_j$ to be the set of all coordinates $i \in I_j'$ such that $\sum_{k \in I_j'} |A_{ik}| \le \eta$. 
\ENDFOR
\STATE Output $(I_1,\ldots,I_\ell)$
\end{algorithmic}
\end{algorithm}

Notice that each subset is selected independently of the others.
We are now going to prove that the output of this sampling
procedure satisfies ~\ref{eq:pr1} and ~\ref{eq:pr2}
with positive probability. By the nature of the procedure,
~\ref{eq:pr2} is automatically satisfied. It remains to 
check that ~\ref{eq:pr1} holds. First, for a fixed $i \in [n]$ and $j \in [\ell]$,
we calculate the probability that $i \in I_j$.
\begin{align*}
\Pr\lp[i \in I_j\rp]
&= \Pr\lp[i \in I_j'\rp] \Pr\lp[i \in I_j \middle| i \in I_j'\rp]
= \Pr\lp[i \in I_j'\rp] \Pr\lp[\sum_{k \in I_j'} |A_{ik}| \le \eta ~\middle|~ i \in I_j\rp]\\
&= \Pr\lp[i \in I_j'\rp] \Pr\lp[\sum_{k \in I_j'} |A_{ik}| \le \eta\rp]
= \frac{\eta}{2} \Pr\lp[\sum_{k \in I_j'} |A_{ik}| \le \eta\rp].
\end{align*}
By linearity of expectation we have:
\[
\E\lp[\sum_{k \in I_j'} |A_{ik}|\rp] = \frac{\eta}{2} \sum_{k \in [n]} |A_{ik}| \le \frac{\eta}{2}
\]
Thus, by applying Markov's inequality we get:
\[
\Pr\lp[\sum_{k \in I_j'} |A_{ik}| \ge \eta\rp] \le \frac{1}{2},
\]
which yields
$$
\Pr\lp[i \in I_j\rp] \ge \frac{\eta}{4}.
$$
Now, fix an $i \in [n]$ and define 
$$
S_i := |\{j \in [\ell] \colon i \in I_j\}|
$$
Notice that the events $\{i \in I_j\}_{j=1}^\ell$ are independent
from each other. 
Hence, we can write
$$
S_i = \sum_{j=1}^\ell \mathbf{1}(i \in I_j)
$$
which means that $S_i$ is a sum of independent Bernoulli
random
variables.
By the preceding calculation, we get:
$$
\E[S_i] = \sum_{j=1}^\ell\E\lp[\mathbf{1}(i \in I_j)\rp]=
\sum_{j=1}^\ell \Pr\lp[i \in I_j\rp] \geq \frac{\eta \ell}{4}.
$$
Now, using Hoeffding's inequality and setting
$\ell = 32\log4 \log n/\eta^2$ we get:
\begin{align*}
\Pr\lp[S_i \leq \frac{\eta \ell}{8}\rp] &\leq
\Pr\lp[|S_i - \E[S_i]| \geq \frac{\eta \ell}{8}\rp]
\leq 2\exp\lp(-2\frac{\lp(\frac{\eta \ell}{8}\rp)^2}{\ell}\rp)\\
&\leq 2\exp\lp(-\frac{\eta^2 \ell}{32}\rp)
\leq \frac{1}{2n}.
\end{align*}

By a simple union bound we get
$$
\Pr\lp[\exists i\in [n]:S_i \leq \frac{\eta \ell}{8}\rp] \leq n\frac{1}{2n} = \frac{1}{2}
$$
That means that with positive probability we have $S_i \geq \eta \ell/8$. Hence, there exists a collection of subsets $I_1,\ldots,I_\ell$ having this property.
Notice that we do not have Equation~\ref{eq:pr1} but an inequality instead. However, for each $i$ such that
$S_i > \eta \ell/8$, we can remove $i$ from some
sets so that the number of sets it appears is exactly 
$\lceil \eta\ell/8 \rceil$. Clearly, by removing elements
from a set, inequality~\ref{eq:pr2} still holds.
\end{proof}

\begin{proof}[Proof of Lemma~\ref{lem:ising-subsample}]
    Let $J$ be the interaction matrix of the Ising model 
    distribution of $X$ and let $h$ denote the external field. By the hypothesis,
    we have that $\|J\|_\infty/M \leq 1$.
    Hence, by applying Lemma~\ref{l:aux} for the matrix
    $\|J\|_\infty/M \leq 1$ with $\eta' = \eta/M$,
    we obtain that there is a collection of subsets
    $I_1,\ldots,I_\ell$ such that
    \begin{itemize}
        \item For all $i \in [n]$,
        $$
            |\{j \in [\ell] \colon i \in I_j\}| = \lceil \frac{\eta\ell}{8M} \rceil
            $$
        \item For all $j \in [l]$ and all $i \in I_j$
        \begin{equation}\label{eq:dobr}
        \sum_{k \in I_j} \frac{|A_{ik}|}{M} \leq \frac{\eta}{M} \implies \sum_{k \in I_j} |A_{ik}| \leq \eta 
        \end{equation}
    \end{itemize}
    We now argue that if inequality~\ref{eq:dobr} holds,
    then the conditional distribution of $X_{I_j}$ conditioned on $X_{-I_j}$ is an $(\eta,\gamma)$-Ising model.
    To show that, it suffices to argue that $\Pr[X_{I_j} = y | X_{-I_j} = x_{-I_j}] \propto \exp(y^\top J' y + h'^\top y)$, for some interaction matrix $J'$ and external field $h'$.
    Hence, we calculate the ratio of conditional probabilities for two configurations $y$ and $y'$:
    $$
    \frac{\Pr\lp[X_{I_j} = y|X_{-I_j} = x_{-I_j}\rp]}{\Pr\lp[X_{I_j} = y'|X_{-I_j}=x_{-I_j}\rp]} = 
    \frac{\exp\lp(\sum_{u \in I_j,v \in I_j} J_{uv}y_uy_v +
    \sum_{u \in I_j, v \notin I_j} J_{uv}y_u x_v + h^\top y\rp)}{\exp\lp(\sum_{u \in I_j,v \in I_j} J_{uv}y_u'y_v' +
    \sum_{u \in I_j, v \notin I_j} J_{uv}y_u' x_v + h^\top y'\rp)}
    $$
    By the preceding equality, it is clear that the conditional
    distribution of $X_{I_j}$ conditional on $X_{-I_j} = x_{-I_j}$ is an Ising model with interaction matrix
    $J' = \{J_{uv}\}_{u,v\in I_j}$ and external field $h_i' = h_i + \sum_{v \notin I_j}J_{iv}x_v$. 
    This proves the claim. 
    
    To conclude with the last part of the lemma, fix $a \in \mathbb{R}^n$, and drawing $j \in [\ell]$ uniformly at random, one obtains
	\[
	\E_j[\sum_{i \in I_j} a_i]
	= \E_j[\sum_{i=1}^n a_i \mathbf{1}(i \in I_j)]
	= \frac{\lceil \eta\ell/(8M) \rceil}{\ell} \sum_{i=1}^n a_i.
	\ge \frac{\eta}{8M} \sum_{i=1}^n a_i.
	\]
	In particular, there exists some $j \in [\ell]$ which achieves a value of at least this expectation.
\end{proof}

\section{Concentration Inequalities for General Functions}
\label{s:concentration}
One of the main goals of the proof is to show that the derivative of the log pseudo-likelihood concentrates around its mean value. To do this, we rely on suitably conditioning on some of the spins, which guarantees that the conditional distribution on the remaining spins satisfies Dobrushin's condition.
However, the task of showing concentration of the derivative in
this regime remains. We begin with an overview of the results regarding
concentration of functions on the boolean hypercube when Dobrushin's condition is satisfied. We then state and prove a modification of
these results that serves our purposes well.

Many concentration results in the weak dependence regime concern
functions that are multi-linear polynomials. Specifically, in \cite{adamczak2019note} they prove
general concentration results for arbitrary multilinear polynomials of weakly dependent random variables.
For polynomials of degree 2, they show the following:
\begin{theorem}[\cite{adamczak2019note}]\label{t:note}
	Let $A$ be a symmetric matrix with zeros in the diagonal and $X$ be an $(\alpha,\gamma)$-bounded random variable. Let $p(x) = x^\top A x$. Then, for any $t > 0$,
	\[
	\Pr\lp[\lp|p(X) - \E p(X)\rp| > t\rp]
	\le \exp\lp(-c \min\lp(\frac{t^2}{\|A\|_F^2+\|\E Ax\|_2^2}, \frac{t}{\|A\|_2}\rp)\rp).
	\]
where the constant $c$ only depends on $\alpha,\gamma$ and
not on the entries of $A$. 
\end{theorem}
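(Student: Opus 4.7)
The plan is to derive Theorem~\ref{t:note} as a direct corollary of Informal Theorem~\ref{thm:informal-concentration} specialized to the quadratic polynomial $p(x)=x^\top A x$. Since $A$ is symmetric and zero-diagonal, a short calculation of the discrete derivatives gives $D_i p(x)=2(Ax)_i$ and $H_{ij} p(x)=2A_{ij}$; in particular, $Hp$ is constant in $x$. Substituting $\E[Dp(X)]=2\,\E[AX]$, $\|Hp\|_F=2\|A\|_F$, and $\|Hp\|_2=2\|A\|_2$ into the bound of Informal Theorem~\ref{thm:informal-concentration} yields exactly the claimed inequality, with the factors of $2$ absorbed into the constant~$c$.

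The substantive content therefore lies in proving Informal Theorem~\ref{thm:informal-concentration}, and my approach would be the entropy method. Under Dobrushin's condition on $J$ together with the $\gamma$-lower bound on conditional variances, the Ising measure $\mu$ satisfies an approximate modified log-Sobolev inequality with some constant $\rho=\rho(\alpha,\gamma)>0$: for every positive $f$,
$$
\mathrm{Ent}_\mu(f)\ \le\ \frac{1}{\rho}\sum_{i=1}^n\E_\mu\bigl[\mathrm{Ent}_{\mu_{i\mid\cdot}}(f)\bigr],
$$
where $\mu_{i\mid\cdot}$ denotes the single-site conditional. Such tensorization-of-entropy statements are classical under Dobrushin, following from Marton-style coupling arguments. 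Applying this with $f=e^{\lambda g}$, and using the elementary single-coordinate estimate $\mathrm{Ent}_{\mu_{i\mid\cdot}}(e^{\lambda g})\le C\lambda^2(D_i g)^2\,\E_{\mu_{i\mid\cdot}}[e^{\lambda g}]$, produces a Herbst-type differential inequality on the log-MGF of $g(X)-\E g(X)$ that, after integration, controls its tails.

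The naive integration only yields a bound in terms of $\E\|Dg(X)\|_2^2$, which in the quadratic case is of order $\|A\|_F^2+\|\E AX\|_2^2+\sum_i\mathrm{Var}((Ax)_i)$ and is too large. To get the sharper Bernstein form with Gaussian proxy $\|\E Dg\|_2^2+\|Hg\|_F^2$ and sub-exponential scale $\max_x\|Hg\|_2$, I would use a one-step self-bounding argument: decompose $\E\|Dg(X)\|_2^2=\|\E Dg\|_2^2+\sum_i\mathrm{Var}((Dg)_i)$, and control each variance by applying the same entropy inequality one level down to the linear function $(Dg)_i=2(Ax)_i$, whose discrete Hessian entries are bounded by those of $Hg$. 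Summing these contributions produces the $\|Hg\|_F^2$ term, while the $\max_x\|Hg\|_2$ sub-exponential scale appears by truncating the admissible range of $\lambda$ in Herbst's argument, making the bound linear in $t$ in the large-deviation regime.

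The principal obstacle I expect is this bootstrapping step: converting the first-order entropy bound into the two-regime Bernstein form requires careful bookkeeping so that the operator norm $\|A\|_2$ (rather than a larger quantity like $\|A\|_F$ or $\sqrt{n}\|A\|_2$) governs the tail. This is precisely where the argument of \cite{adamczak2019note} improves on a direct McDiarmid-type calculation: $\|Hg\|_2$ is the correct Lipschitz scale of the \emph{gradient} $Dg$ in Hamming metric, and recognizing this at the right point in the recursion is what separates the tight Hanson--Wright constant from the loose bound one would obtain by bounding each discrete derivative coordinatewise.
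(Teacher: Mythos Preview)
Theorem~\ref{t:note} is not proved in the paper; it is quoted from \cite{adamczak2019note}. What the paper does prove is the generalization Theorem~\ref{t:newnote} (the formal version of Informal Theorem~\ref{thm:informal-concentration}), and your first paragraph correctly observes that Theorem~\ref{t:note} drops out of Theorem~\ref{t:newnote} by computing $D_ip(x)=2(Ax)_i$ and $H_{ij}p(x)=2A_{ij}$.

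For the substantive part---the proof of Theorem~\ref{t:newnote}---your approach and the paper's diverge. You propose the entropy method: a modified log-Sobolev inequality, Herbst's argument on the MGF of $e^{\lambda g}$, and then a self-bounding recursion to upgrade the naive variance proxy $\E\|Dg\|_2^2$ to the sharper pair $\|\E Dg\|_2^2+\|Hg\|_F^2$ and $\max_x\|Hg\|_2$. The paper instead works entirely with $L^p$ moments: it invokes the $L^p$-Poincar\'e inequality of Marton/G\"otze et al.\ (Lemma~\ref{l:logsob}), bounds $\|\mathfrak{d}f\|_2$ by $\|Df\|_2$, and then uses the Gaussian auxiliary-variable trick $\sqrt{p}\,\|\tilde D(X)\|_2\le M\,\|\tilde D(X)^\top g\|_{L^p_g}$ to convert the $\ell_2$ norm into a linear form. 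Centering via Minkowski splits off the $\|\E\tilde D\|_2$ term; one more application of the $L^p$-Poincar\'e inequality to $x\mapsto \tilde D(x)^\top g$ (for fixed $g$) produces the Hessian and leaves a decoupled Gaussian chaos $g^\top\tilde H(X)g'$, to which Hanson--Wright applies directly. The two regimes $\sqrt{p}\,\|\tilde H\|_F$ and $p\,\|\tilde H\|_2$ come straight out of Hanson--Wright, and Markov's inequality with the optimal $p$ finishes.

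The paper's route sidesteps exactly the obstacle you flag: the operator norm $\|A\|_2$ appears automatically as the sub-exponential scale in Hanson--Wright, with no delicate truncation of $\lambda$ or bookkeeping in a recursive Herbst argument. Your entropy approach is not wrong in spirit, but the ``one-step self-bounding'' you sketch is the hard part and is left vague; carrying it through so that $\|A\|_2$ rather than something larger governs the tail is precisely what the moment/Gaussian-auxiliary route of \cite{adamczak2019note} was designed to make clean.
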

This inequality is tight up to constants. A nice property of this result is that it explicitly connects
the radius of concentration with the Frobenius norm of the
matrix. 

Unfortunately, the derivative of the PMLE is not a polynomial.
Hence, we cannot directly apply Theorem~\ref{t:note}. Instead,
we will modify it's proof so that it holds for arbitrary functions over the hypercube. 
The original proof relied on the fact that
the Hessian matrix of a second degree polynomial is constant.
Despite this not being true in our case,
we will follow the same strategy and prove concentration 
using the second order Taylor approximation of the function.
First, we need to define these quantities precisely. In the following, for
a vector $x$ and an index $i$,
we denote by $x_{i+}$ the vector obtained from $x$ by replacing that $i$'th coordinate with $1$ and by $x_{i-}$ the one that is obtained by replacing this coordinate with $-1$.

\begin{definition}
For an arbitrary function $f:\{0,1\}^n \mapsto \R$,
we define the \emph{discrete derivative} of the function as 
\[
D_i f(x) := \frac{f(x_{i+}) - f(x_{i-})}{2}
\]
Let $Df(x)$ denote the $n$-coordinate vector of discrete derivatives. 
Similarly, the function $H:\{0,1\}^n \mapsto \R^{n\times n}$
defined as
$$
H_{ij}(x) = D_i(D_jf(x))
$$
is called the \emph{discrete Hessian} of $f$. 
\end{definition}
As we will see, our concentration bound will depend on the
discrete derivative and Hessian of a function.
However, in some cases it is more convenient to provide bounds
for these quantities rather than explicitly calculate them.
Therefore, we can replace these two quantities by the ones defined below.
\begin{definition}
Let $f:\{0,1\}^n \mapsto \R$ be an arbitrary function.
A function $\tilde{D}:\{0,1\}^{n_1} \mapsto \R^n$ is called a
\emph{pseudo discrete derivative} for $f$ if
\[
\|\tilde{D}(x)\|_2 \ge \|Df(x)\|_2
\]
for all $x \in \{0,1\}^n$.
Additionally, we say that a function $\tilde{H} \colon \{-1,1\}^n \to \mathbb{R}^{n_1 \times n_2}$ is a \emph{pseudo discrete Hessian} with respect to the pseudo discrete derivative $\tilde{D}$ if for all $u \in \mathbb{R}^{n_1}, x \in \R^n$,
\[
\|u^\top\tilde{H}(x)\|_2 \ge \|D(u^\top \tilde{D}(x))\|_2.
\]
\end{definition} 
We are now ready to state the modification of Theorem~\ref{t:note}.
\begin{theorem}\label{t:newnote}
    Let $f:\{0,1\}^n \mapsto \R$ be an arbitrary function and
    $X$ an $(\alpha,\gamma)$-bounded random variable. Then
	\[
	\Pr[|f(x)-\E f(x)|>t]
	\le \exp\lp(-c\min\lp(\frac{t^2}{\|\E \tilde{D}(x)\|_2^2 + \max_x\|\tilde{H}(x)\|_F^2}, \frac{t}{\max_x \|\tilde{H}(x)\|_2}\rp)\rp).
	\]
\end{theorem}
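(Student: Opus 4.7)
The plan is to adapt the proof of Theorem~\ref{t:note} in~\cite{adamczak2019note}, which establishes the same concentration bound in the special case of a quadratic polynomial $p(x) = x^\top A x$, where the discrete derivative $Dp(x)$ is affine in $x$ and the discrete Hessian $Hp$ equals $A$ (hence constant). The key conceptual point is that the original proof only exploits $Dp$ and $Hp$ through norm bounds, so replacing them with the upper-bounding pseudo-derivative $\tilde D$ and pseudo-Hessian $\tilde H$ should preserve every step of the argument.

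The first step is to recall the base Gaussian concentration inequality under Dobrushin's condition: for any function $g$ of an $(\alpha,\gamma)$-bounded random variable $X$, a martingale/entropy argument exploiting contractivity of Glauber dynamics yields a sub-Gaussian tail whose variance proxy is controlled by $\|Dg(x)\|_2^2$ (in either the expectation or the sup form). This is exactly the ingredient used in~\cite{adamczak2019note} and its proof uses no polynomial structure, so it applies to our arbitrary $f$ verbatim. The second step is the iteration trick: applying the base inequality to $f$ gives a bound in terms of $\|Df\|_2^2$, which is itself a random variable, and one reapplies the base inequality (or its directional variant) to control the fluctuations of $\|Df\|_2^2$ around its mean via its own discrete derivative, which is a function of $Hf$. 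Decomposing $\|Df(x)\|_2^2 = \|\E Df\|_2^2 + (\|Df(x)\|_2^2 - \|\E Df\|_2^2)$ and bounding the fluctuation term by a quadratic-form expression in $Hf$ leads to the mixed Gaussian/exponential tail with parameters $\|\E Df\|_2^2 + \max_x \|Hf(x)\|_F^2$ and $\max_x \|Hf(x)\|_2$.

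To pass from $(Df, Hf)$ to $(\tilde D, \tilde H)$, the plan is to check that every appearance of $\|Df\|_2$ in the original argument can be replaced by $\|\tilde D\|_2$ using the pointwise inequality $\|\tilde D(x)\|_2 \ge \|Df(x)\|_2$, and that the propagation step through the Hessian uses only the norm of the discrete derivative of a directional function $u^\top \tilde D$, which by assumption is dominated by $\|u^\top \tilde H(x)\|_2$. Because the roles of $Hf$ in the proof are always (i) inside a Frobenius norm arising from $\sum_i \E[(D_i g)^2]$ when $g$ is a component of $\tilde D$, and (ii) inside a spectral norm arising from the worst-case direction in the exponential-tail regime, the substitution is clean.

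The main obstacle I anticipate is handling the fact that for a genuine quadratic polynomial the original proof crucially uses $Hp = A$ being deterministic, which lets one pull $\|A\|_F$ and $\|A\|_2$ outside expectations without loss. For general $f$, the analogous step only gives a bound in terms of $\E \|Hf(X)\|_F^2$ and $\E \|Hf(X)\|_2$; to recover exactly the stated bounds with $\max_x$ one must pay the (lossless) price of replacing expectations by suprema at the appropriate place in the iteration. A secondary subtlety is that the pseudo-derivative has range $\mathbb{R}^{n_1}$ rather than $\mathbb{R}^n$, so the matrix dimensions of $\tilde H$ differ from those of $Hf$; however, since the bounds involve only the Frobenius and spectral norms of $\tilde H$ and not its explicit indices, the dimension mismatch is cosmetic and does not affect the argument.
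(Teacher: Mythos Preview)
Your high-level strategy---iterate a first-order concentration bound, then upgrade $(Df,Hf)$ to $(\tilde D,\tilde H)$ via the pointwise norm domination---matches the paper's, and you correctly flag the deterministic-versus-random Hessian issue and the harmless dimension mismatch. The difference is in the technical mechanism for the iteration.

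The paper does not work with tail bounds or with $\|Df(x)\|_2^2$ directly. Instead it runs the whole argument in $L^p$ moments via the log-Sobolev inequality (Lemma~\ref{l:logsob}), and the crucial device is Gaussian linearization: one introduces an independent $g\sim\mathcal N(0,I)$ and uses $\sqrt{p}\,\|\tilde D(X)\|_2 \lesssim \bigl(\E_g|\tilde D(X)^\top g|^p\bigr)^{1/p}$. Splitting $\tilde D(X)^\top g=(\E_X\tilde D)^\top g+(\tilde D(X)-\E_X\tilde D)^\top g$, the first piece is a single Gaussian with variance exactly $\|\E\tilde D\|_2^2$, while the second is centered in $X$ for each fixed $g$, so Lemma~\ref{l:logsob} applies again and produces $\|D(g^\top\tilde D)(X)\|_2\le\|g^\top\tilde H(X)\|_2$. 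A second Gaussian randomization plus Hanson--Wright then delivers the $\sqrt{p}\,\|\tilde H\|_F+p\,\|\tilde H\|_2$ moment bound, and Markov with optimized $p$ gives the tail.

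Your proposed route---apply the base tail bound, then reapply it to the fluctuations of $\|Df(x)\|_2^2$---hits two snags that the linearization avoids. First, your decomposition $\|Df(x)\|_2^2=\|\E Df\|_2^2+(\text{rest})$ does not center the ``rest'': its mean is $\sum_i\Var(D_if)\ge 0$, and you would need a separate Poincar\'e step to absorb this into $\max_x\|\tilde H(x)\|_F^2$. Second, the discrete derivative of $\|Df\|_2^2$ is $\sum_i D_j\bigl((D_if)^2\bigr)$, which mixes first and second derivatives of $f$ rather than depending on $Hf$ alone, so you do not land cleanly on $\|\tilde H\|_F$ and $\|\tilde H\|_2$. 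These are not fatal, but patching them would essentially reinvent the linearization; the $L^p$/Gaussian route is what makes the stated bound come out in exactly this form.
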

The proof of Theorem~\ref{t:newnote} follows the structure
of Theorem~\ref{t:note}, with only slight modifications.
Hence, we are now going to describe the machinery used in the proof of Theorem~\ref{t:note}.

The main ingredient of the proof is a discrete logarithmic Sobolev inequality, first proven in \cite{marton2003measure}. First, we need a definition.

\begin{definition}
For any function $f:\{0,1\}^n \mapsto \R$ and $i \in [n]$,
we define
\[
\mathfrak{d}_if(x) = \frac{1}{2} \lp(\E\lp(\lp(f(X) - f(X_1,\ldots,X_{i-1},\Tilde{X_i},X_{i+1},\ldots,X_n)\rp)^2\mid X = x\rp)
\rp)^{1/2}
\]
where the random variable $X = (X_1,\ldots,X_n)$ has distribution $\mu$ and the conditional distribution of $\Tilde{X_i}$ given that $X = x$ is $\mu(\cdot|\Bar{x}_i)$, 
which is the conditional distribution of $X_i$ given that 
$X_j = x_j$ for $j \neq i$.
We denote by $\mathfrak{d}f(x)$ the vector in $\R^n$
whose $i$-th coordinate is $\mathfrak{d}_if(x)$.
\end{definition}
This quantity intuitively captures how much function $f$
changes on average when we resample one of its input variables independently. By the classical theory of Concentration
Inequalities for i.i.d. random variables, we know that this \emph{average Lipschitzness}  directly
affects the concentration properties of the function. 
This is indeed the case here, as the next lemma shows.
\begin{lemma}[\cite{marton2003measure,gotze2019higher}]\label{l:logsob}
Let $f:\{0,1\}^n \mapsto \R$ be an arbitrary function on the hypercube. Suppose $X$ is an $(\alpha,\gamma)$-bounded random variable. Then for any $p \geq 2$ it holds
\[
\|f(X) - \E f(X)\|_p :=
\lp(\E \lp( f(X) - \E f(X)\rp)^p\rp)^{1/p} \leq \sqrt{2Cp}\lp(\E\lp(\|\mathfrak{d}f(X)\|_2^p\rp)\rp)^{1/p}
\]
where C is a function of $\alpha,\gamma$ which is bounded when $\alpha$ is bounded from $1$ and $\gamma$ is bounded from zero.
\end{lemma}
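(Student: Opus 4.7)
The plan is to derive the claimed moment inequality in two steps: first establish a modified log-Sobolev inequality (MLSI) for $\mu$ under the $(\alpha,\gamma)$-bounded condition, and then apply an entropy/Herbst-style argument to pass from the MLSI to the stated $L^p$ bound. Throughout, $C$ will denote a constant depending only on $\alpha$ and $\gamma$, finite whenever $\alpha$ is bounded away from $1$ and $\gamma$ is bounded away from $0$.

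The first step is an \emph{approximate tensorization of entropy}: for any $g:\{-1,1\}^n\to\mathbb{R}_{>0}$,
\[
\mathrm{Ent}_\mu(g) \le C\sum_{i=1}^n \E_\mu\!\left[\mathrm{Ent}_{\mu(\cdot\mid \bar x_i)}(g)\right].
\]
I would prove this via Marton's coupling. Since $\alpha<1$, the heat-bath Glauber chain on $\mu$ is a contraction in an appropriate $\ell^1$-Wasserstein metric with rate driven by the Dobrushin matrix, and interpolating between $\mu$ and the product of its one-site conditionals yields the above inequality with a constant depending only on $1-\alpha$ and $\gamma$. Combining this with the elementary two-point log-Sobolev inequality applied to each single-site conditional (whose constant is bounded under the $\gamma$-lower bound on $\mathrm{Var}(x_i\mid x_{-i})$), and using that on a two-point space $\mathrm{Ent}_{\mu(\cdot\mid\bar x_i)}(g^2)\le C\,\mathrm{Var}_{\mu(\cdot\mid\bar x_i)}(g)$, yields
\[
\mathrm{Ent}_\mu(g^2) \le C\sum_{i=1}^n \E_\mu\!\left[\mathrm{Var}_{\mu(\cdot\mid \bar x_i)}(g)\right] \;=\; C\,\E_\mu\!\left[\|\mathfrak d g(X)\|_2^2\right],
\]
where the last equality uses that $\mathfrak d_i g(x)^2$ is (half) the conditional variance of $g$ when the $i$-th coordinate is resampled from $\mu(\cdot\mid\bar x_i)$.

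The second step is the Herbst moment argument. Set $F = f-\E f$ and, for each $p\ge 2$, apply the MLSI to $g = |F|^{p/2}$. Using the discrete chain rule $|\mathfrak d_i(|F|^{p/2})(x)| \le \tfrac{p}{2}\,\max(|F(x_{i+})|,|F(x_{i-})|)^{(p-2)/2}\,|\mathfrak d_i F(x)|$ and the identity $\mathfrak d F = \mathfrak d f$, the right-hand side is bounded by $C p^2\,\E_\mu[\,|F|^{p-2}\|\mathfrak d f\|_2^2]$. Expressing $\mathrm{Ent}_\mu(|F|^p)$ via the variational formula and applying H\"older's inequality $\E[|F|^{p-2}\|\mathfrak d f\|_2^2] \le \|F\|_p^{p-2}\,\bigl(\E[\|\mathfrak d f\|_2^p]\bigr)^{2/p}$ gives a recursion that closes to
\[
\|F\|_p \le \sqrt{2Cp}\,\bigl(\E[\|\mathfrak d f(X)\|_2^p]\bigr)^{1/p},
\]
which is exactly the stated inequality.

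The main obstacle is the first step: proving approximate tensorization of entropy with a constant depending only on $\alpha$ and $\gamma$, and independent of $n$. Unlike for product measures, one cannot tensorize entropy coordinate-wise; one has to exploit the weak dependence structure through a coupling. The standard route, due to Marton, uses the Dobrushin matrix to iteratively couple $\mu$ with each of its one-site conditionals at bounded Wasserstein cost, where the geometric decay comes from $\alpha<1$, and the per-site cost is controlled by $\gamma$. Verifying that both factors combine into a single constant depending only on $(\alpha,\gamma)$, and that the resulting entropy bound has the correct form for plugging into Herbst's recursion, is the technically delicate part of the argument.
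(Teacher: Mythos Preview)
The paper does not prove this lemma; it is stated with citation to \cite{marton2003measure,gotze2019higher} and used as a black box in the proof of Theorem~\ref{t:newnote}, so there is no in-paper argument to compare against.

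Your sketch is essentially the route of the cited references: Marton's coupling yields approximate tensorization of entropy under the Dobrushin condition (with constant depending only on $1-\alpha$ and $\gamma$), and then a Herbst-type moment recursion applied to $g=|F|^{p/2}$ converts the resulting modified log-Sobolev inequality into the stated $L^p$--$L^p$ Poincar\'e inequality. The one place your outline is loose is the discrete chain-rule step: bounding $|\,|F(x)|^{p/2}-|F(x')|^{p/2}\,|$ produces $\max(|F(x)|,|F(x')|)^{p/2-1}$ rather than $|F(x)|^{p/2-1}$, so passing to the clean form $Cp^2\,\E[|F|^{p-2}\|\mathfrak d f\|_2^2]$ requires the elementary bound $\max(a,b)^{p-2}\le a^{p-2}+b^{p-2}$ together with the reversibility of the single-site resampling (so that the term with the flipped coordinate has the same expectation). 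Once that is made explicit, the recursion closes exactly as you describe.
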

Lemma~\ref{l:logsob} essentially tells us that in order to bound the $p$-th moment of the function we wish to show concentration for, it is enough to control the $p$-th moment 
of $\|\mathfrak{d}f(X)\|_2$. In the proof of Theorem~\ref{t:note}, the authors bound this moment by the
corresponding moment of a multilinear form of gaussian random
variables. In doing so, they exploit the fact that $\mathfrak{d}_if(X)$ can be very conveniently bounded when $f$
is a multilinear polynomial. We will follow exactly the same technique, while relying on the discrete derivative of $f$ instead to bound $\mathfrak{d}_if(X)$. 

\begin{proof}[Proof of Theorem~\ref{t:newnote}]
As mentioned earlier, the proof follows the same strategy as \cite{adamczak2019note}, with a small adjustment. Our general strategy will be to bound the $p$-th
moment of $f(X) - \E f(X)$ by the $p$-th moment of a gaussian
multilinear form. 
By Jensen's Inequality, we have:
\begin{align*}
\E\lp(\|\mathfrak{d}f(X)\|_2^p\rp) &= 
\E \lp[\lp(\sum_{i=1}^n \lp(\frac{1}{2} \lp(\E\lp(\lp(f(X) - f(X_1,\ldots,X_{i-1},\Tilde{X_i},X_{i+1},\ldots,X_n)\rp)^2\mid X = x\rp)
\rp)\rp)\rp)^{p/2}\rp]\\
&\leq \E \lp[\lp(\sum_{i=1}^n \frac{1}{2} \lp(f(X) - f(X_1,\ldots,X_{i-1},\Tilde{X_i},X_{i+1},\ldots,X_n)\rp)^2
\rp)^{p/2}\rp]
\end{align*}
We now note that for each $i$, either $X_i$ and $\tilde{X_i}$
will be the same or they will have opposite signs.
This means that in any case
$$
\lp|f(X) - f(X_1,\ldots,X_{i-1},\Tilde{X_i},X_{i+1},\ldots,X_n)\rp| \leq \lp|D_if(X)\rp|
$$
This means that
$$
\E\lp(\|\mathfrak{d}f(X)\|_2^p\rp)
\leq \E \lp(\|\frac{1}{2}Df(X)\|_2^p\rp)
$$
Combining this with the log Sobolev inequality of Lemma~\ref{l:logsob} we get:
\begin{equation}\label{eq:step1}
\|f(X) - \E f(X)\|_p \leq \sqrt{Cp} \lp(\E \|Df(X)\|_2^p\rp)^{1/p}
\leq \sqrt{Cp} \lp(\E \|\tilde{D}(X)\|_2^p\rp)^{1/p}
\end{equation}
Now suppose $g \sim \mathcal{N}(0,I_n)$ is an
$n$-dimensional gaussian vector independent of $X$. 
For a fixed $X$, the random variable $\frac{\tilde{D}(X)^\top g}{\|\tilde{D}(X)\|_2}$
is a single dimensional gaussian $\mathcal{N}(0,1)$.
Hence, by elementary properties of the gaussian distribution, there exists a constant $M>0$ independent of $p$ such that 
$$
\lp(\E_g\lp(\frac{\tilde{D}(X)^\top g}{\|\tilde{D}(X)\|_2}\rp)^p\rp)^{1/p}
\geq
\frac{1}{M}\sqrt{p}
$$
The symbol $\E_g$ means that we are integrating only with
respect to the random variable $g$.
This implies that for a fixed $X$
$$
\sqrt{p} \|\tilde{D}(X)\|_2 \leq M \lp(\E_g\lp(\tilde{D}(X)^\top g\rp)^p\rp)^{1/p}
$$
Combining with Equation~\ref{eq:step1} , we get that for all functions $f$ and pseudo derivatives $\tilde{D}$:
\begin{equation}\label{eq:induction}
\|f(X) - \E f(X)\|_p 
\leq K\lp(\E_{X,g} \lp(\tilde{D}(X)^\top g\rp)^p\rp)^{1/p}
\end{equation}
where $K = M\sqrt{C}$.
Inequality~\ref{eq:induction} is a first step in proving the
bound on the $p$-th moment. We want to use this inequality
to make the second derivative appear on the right hand side. 
To do this, we "fix" the value of $g$. By  Minkowski's inequality we have:
$$
\lp(\E_{X,g} \lp(\tilde{D}(X)^\top g\rp)^p\rp)^{1/p} =
\|\tilde{D}(X)^\top g\|_p \leq
\|\tilde{D}(X)^\top g - \E_X \tilde{D}(X)^\top g\|_p
+ \|\E_X \tilde{D}(X)^\top g\|_p
$$
First, we bound the second term.
By linearity of expectation, we have:
$$
\|\E_X \tilde{D}(X)^\top g\|_p =
\lp( \E_g \lp( (\E_X \tilde{D}(X))^\top g\rp)^p \rp)^{1/p}
$$
Now, the variable $(\E_X \tilde{D}(X))^\top g$ is clearly
a single dimensional gaussian, which means that
it's $p$-th moment is bounded as:
\begin{equation}\label{eq:step2}
    \|\E_X \tilde{D}(X)^\top g\|_p
    \leq M\sqrt{p}\|\E_X \tilde{D}(X)\|_2
\end{equation}
and that concludes the bound for the second term.
For the first term, fix $g$ and define the function $h:\{0,1\}^n\mapsto \R$ as $h(x) = \tilde{D}(x)^\top g - \E_X \tilde{D}(X)^\top g$. 
Now, we apply inequality~\ref{eq:step1} to the function 
$h$, which gives us
\begin{align*}
    \lp(\E_X \lp(\tilde{D}(X)^\top g - \E_X \tilde{D}(X)^\top g\rp)^p\rp)^{1/p}
    &\leq\sqrt{Cp}\lp( \E_X \|D(\tilde{D}(X)^\top
    g)\|_2^p\rp)^{1/p}\\
    &\leq \sqrt{Cp}  \lp( \E_X \|g^\top \tilde{H}(X)\|_2^p\rp)^{1/p}\\
    &\leq \sqrt{C}M \lp( \E_{X,g'} \lp|g^\top \tilde{H}(X)g'\rp|^p\rp)^{1/p}
\end{align*}
We conclude that
\begin{align*}
    \|\tilde{D}(X)^\top g - \E_X \tilde{D}(X)^\top g\|_p
    &= \lp(\E_{X,g} \lp(\tilde{D}(X)^\top g - \E_X \tilde{D}(X)^\top g\rp)^p\rp)^{1/p}\\
    &\leq K \lp( \E_{X,g,g'} \lp|g^\top \tilde{H}(X)g'\rp|^p\rp)^{1/p}
\end{align*}
Notice now that we have to bound the $p$-th norm of a quadratic form $g^\top \tilde{H}(X)g'$ where $g,g'$ are independent gaussian vectors.
That is precisely what the Hanson-Wright inequality gives us. Note that the matrix $\tilde{H}$ need not have zeroes in
the diagonal, as mentioned in \cite{rudelson2013hanson,latala2006estimates}.
\footnote{The standard result in \cite{rudelson2013hanson, latala2006estimates} applies to square matrices $A$. 
Since $\tilde{H}$ might not be necessarily a square matrix,
we can just make it square by adding zeroes to the missing
dimensions. The quadratic form doesn't change and
the matrix norms $\|\cdot\|_F, \|\cdot\|_2$ remain
the same.}
For a fixed $X$ there is a constant $C'$ such that:
$$
\lp(\E_{g,g'}\lp(g^\top \tilde{H}(X)g'\rp)^p\rp)^{1/p}\leq C'\lp( \sqrt{p}\|\tilde{H}(X)\|_F + p \|\tilde{H}(X)\|_2\rp)
\leq C'\lp(\sqrt{p}\max_{x \in \{0,1\}^n}\|\tilde{H}(x)\|_F
+ p\max_{x \in \{0,1\}^n}\|\tilde{H}(x)\|_2\rp)
$$
This gives us the final inequality:
\begin{equation}\label{eq:step3}
    \|\tilde{D}(X)^\top g - \E_X \tilde{D}(X)^\top g\|_p
    \leq C'\lp(\sqrt{p}\max_{x \in \{0,1\}^n}\|\tilde{H}(x)\|_F
+ p\max_{x \in \{0,1\}^n}\|\tilde{H}(x)\|_2\rp)
\end{equation}
Putting together inequalities~\ref{eq:step2} and ~\ref{eq:step3} we conclude that there exists a universal
constant $C''$ such that 
$$
\|f(X) - \E f(X)\|_p \leq C''\lp(\sqrt{p}\|\E_X \tilde{D}(X)\|_2 + \sqrt{p}\max_{x \in \{0,1\}^n}\|\tilde{H}(x)\|_F
+ p\max_{x \in \{0,1\}^n}\|\tilde{H}(x)\|_2\rp)
$$
To conclude the proof, we notice that by Markov's inequality
and the preceding result:
$$
\Pr\lp[|f(X) - \E f(X)| > e  C''\lp(\sqrt{p}\|\E_X \tilde{D}(X)\|_2 + \sqrt{p}\max_{x \in \{0,1\}^n}\|\tilde{H}(x)\|_F
+ p\max_{x \in \{0,1\}^n}\|\tilde{H}(x)\|_2\rp)\rp] \leq e^{-p}
$$
We now set 
\[
p = \min\lp(\frac{t^2}{\|\E_X \tilde{D}(X)\|_2^2 + \max_{x \in \{0,1\}^n}\|\tilde{H}(x)\|_F^2},
\frac{t}{\max_{x \in \{0,1\}^n}\|\tilde{H}(x)\|_2}\rp)
\]
and the result follows.
\end{proof}
\section{Improved Bound for Estimating a Single Parameter}
\label{sec:one-parameter}
In this section, we prove Corollary~\ref{cor:one-param-informal}, which we now restate for convenience.
\oneparam*

Similarly to the proof of Theorem~\ref{t:general}, our estimator is the maximum pseudo likelihood. 
Define the negative log pseudo-likelihood as 
\[
\varphi(\beta) = - \sum_{i=1}^n \log \Pr_{\beta J}[x_i \mid x_{-i}].
\]
Then, the following holds:
\begin{equation}\label{eq:oneparam-derivative}
\varphi'(\beta)
:= \frac{d\varphi(\beta)}{d\beta}
= \frac{1}{2}\sum_{i=1}^n 
(J_ix) (\tanh(\beta J_i x)-x_i)
\end{equation}
and further,
\begin{equation}\label{eq:oneparam-secondder}
\varphi''(\beta)
:= \frac{d^2\varphi(\beta)}{d\beta^2}
= \frac{1}{2}\sum_{i=1}^n
(J_ix)^2 \sech^2(\beta J_ix)
\ge c \|Jx\|_2^2,
\end{equation}
where the last inequality holds uniformly for all $|\beta|\le M$, since $\sech^2$ is lower bounded whenever its argument is bounded from above.
Similarly to the arguments in Lemma~\ref{t:general}, it suffices to bound the ratio between the first and second derivatives of $\varphi$ and we obtain that
\begin{equation}\label{eq:oneparam-finalbound}
|\beta^*-\hat\beta| 
\le \frac{|\varphi'(\beta^*)|}{\min_{|\beta|\le M} \varphi''(\beta^*)}
\le C \frac{|\varphi'(\beta^*)|}{\|Jx\|_2^2}.
\end{equation}

We prove a lower bound on $\|Jx\|_2^2$ based on arguments from \cite{chatterjee2007estimation,bhattacharya2018inference}, and use lemmas from the proof of Theorem~\ref{t:general} to show that the derivative is bounded in terms of $\|Jx\|_2$.

We begin with the second part. For this purpose, we provide a restatement of the lemmas from the proof of Theorem~\ref{t:general} that we will use here. First, the sub-sampling lemma, that shows how to reduce the correlations in the Ising model by subsampling:
\lemsubsample*

We create the sets $I_1,\dots,I_\ell$ from this lemma with $\eta = 1/2$. The following two lemmas would provide an upper bound on $\varphi'$ and a lower bound on $\|Jx\|_2$, respectively, both in terms of the same quantity, which is a function of the sets $I_1,\dots,I_\ell$:
\begin{lemma}[A special case of Lemma~\ref{lem:derivative-one-concentration}]\label{lem:derivative-special-case}
	For any $t >0$,
	\[
	\Pr\left[\left|\varphi'(\beta^*)\right|>t\left(\|J\|_F + \max_j \|\E[Jx|x_{-I_j}]\|_2\right)\right]
	\le C \log n \exp\lp(-c\min\lp(t^2,\frac{t\|J\|_F}{\|J\|_2}\rp)\rp).
	\]
\end{lemma}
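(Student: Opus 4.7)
The plan is to obtain Lemma~\ref{lem:derivative-special-case} as a direct specialization of Lemma~\ref{lem:derivative-one-concentration}, taking the directional matrix $A$ to be $J$ itself and the true interaction matrix in the multi-parameter formulation to be $J^{*} = \beta^{*}J$. The first step is to verify the identification of derivatives: comparing the directional-derivative formula \eqref{eq:pmle_der} evaluated at $J^{*}=\beta^{*}J$ and $A = J$,
\[
\frac{\partial \varphi(\beta^{*}J)}{\partial J} = \frac{1}{2}\sum_{i=1}^n (J_i x)\bigl(\tanh(\beta^{*}J_i x) - x_i\bigr),
\]
with the single-parameter derivative from \eqref{eq:oneparam-derivative} shows they coincide, so $\varphi'(\beta^{*}) = \partial\varphi(\beta^{*}J)/\partial J$. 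Next, since $|\beta^{*}|\le M$ and $\|J\|_\infty = 1$, we have $\|\beta^{*}J\|_\infty \le M$, so Lemma~\ref{lem:ising-subsample} applies to the Ising model $\Pr_{\beta^{*}J}$ with $\eta = 1/2$ and produces the very sets $I_1,\ldots,I_\ell$ appearing in the statement. Lemma~\ref{lem:derivative-one-concentration} is stated for an arbitrary symmetric matrix $A$ with zero diagonal (there is no $\|A\|_F = 1$ constraint), and its bound scales homogeneously in $A$, so substituting $A = J$ and $J^{*}=\beta^{*}J$ yields the claimed inequality verbatim.

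If one prefers a self-contained derivation that avoids invoking the multi-parameter lemma, I would simply rerun its proof strategy in this one-dimensional setting. Concretely, the steps would be: decompose
\[
\varphi'(\beta^{*}) = \frac{1}{\ell'}\sum_{j\in[\ell]}\psi_j, \qquad \psi_j := \sum_{i\in I_j}(J_i x)\bigl(x_i - \tanh(\beta^{*}J_i x)\bigr),
\]
as in \eqref{eq:3}; invoke the tower argument of Claim~\ref{cla:expectation-zero} to conclude $\E[\psi_j\mid x_{-I_j}] = 0$; apply Lemma~\ref{lem:derivative-concentration} (which exploits that $x_{I_j}$ is conditionally Dobrushin given $x_{-I_j}$, as guaranteed by Lemma~\ref{lem:ising-subsample}) to obtain, for each fixed $j$, the conditional tail bound
\[
\Pr\Bigl[|\psi_j| > t\bigl(\|J\|_F + \|\E[Jx\mid x_{-I_j}]\|_2\bigr)\,\Big|\,x_{-I_j}\Bigr] \le \exp\!\bigl(-c\min(t^{2},\,t\|J\|_F/\|J\|_2)\bigr);
\]
take expectations over $x_{-I_j}$ to remove the conditioning; and finally union-bound over the $\ell = O(\log n)$ sets.

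There is no genuine obstacle here: the heavy machinery (the concentration inequality for non-multilinear functions of a Dobrushin Ising model from Section~\ref{s:concentration}, together with the conditioning trick of Lemma~\ref{lem:ising-subsample} that produces the Dobrushin regime) has already been packaged in the earlier lemmas. The reason Lemma~\ref{lem:derivative-special-case} is worth stating separately is purely quantitative: because we no longer need the bound to hold uniformly over all directions $A \in \mathcal{A}_\xi$, the $\epsilon$-net argument of Lemma~\ref{lem:geometric-union-bound} and its $(Cn)^{k-1}$ factor drop out, leaving only the $\log n$ multiplicative loss from the union bound over $j\in[\ell]$, which is exactly the prefactor appearing in the statement.
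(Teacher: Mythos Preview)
Your proposal is correct and matches the paper's own argument essentially verbatim: the paper simply states that Lemma~\ref{lem:derivative-special-case} follows from Lemma~\ref{lem:derivative-one-concentration} by substituting $J^*$ with $\beta^* J$ and $A$ with $J$, and noting that $\partial\varphi(J\beta^*)/\partial J$ corresponds to $\varphi'(\beta^*)$. Your additional verification of the derivative identification and the self-contained rederivation are more detailed than what the paper provides, but the core approach is identical.
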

\begin{lemma}[A special case of Lemma~\ref{lem:hessian-oneparam}] \label{lem:second-derivative-special-case}
	For any $t > 0$:
	\begin{align*}
	&\Pr\left[\|Jx\|_2^2 < c\|J\|_F^2 + c\max_j \|\E[Jx\mid x_{-I_j}]\|_2^2 - t \left(\|J\|_F + \max_j \|\E[Jx\mid x_{-I_j}]\|_2\right)\right]\\
	&\quad \le C \log n \exp\left(-c\frac{\min(t^2,t\|J\|_F)}{\|J\|_2^2}\right).
	\end{align*}
\end{lemma}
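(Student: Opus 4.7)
The plan is to observe that this statement is simply Lemma~\ref{lem:hessian-oneparam} instantiated with $A = J$, so essentially no new work is required beyond checking that the hypotheses of the general lemma are met in the single-parameter setting.

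First I would verify the structural hypothesis: $J$ is by assumption the (known) interaction matrix of an Ising model, hence symmetric with zero diagonal, which is the only assumption that Lemma~\ref{lem:hessian-oneparam} places on the matrix it is applied to. Second, I would verify the distributional hypothesis. The sample $x$ comes from $\Pr_{\beta^* J}$, and under the assumptions of Theorem~\ref{thm:oneparam-formal} we have $\|\beta^* J\|_\infty \le M$, so the sampling distribution is an $(M,\gamma)$-bounded Ising model (for a $\gamma=\gamma(M)>0$). The sets $I_1,\dots,I_\ell$ that appear in the statement are precisely those produced by Lemma~\ref{lem:ising-subsample} applied to this distribution with $\eta=1/2$, which is the same construction used throughout Section~\ref{s:strong_conv}.

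With these verifications in hand, I would simply instantiate Lemma~\ref{lem:hessian-oneparam} at $A=J$ to conclude:
\[
\Pr\!\left[\|Jx\|_2^2 < c\|J\|_F^2 + c\max_j \|\E[Jx\mid x_{-I_j}]\|_2^2 - t\bigl(\|J\|_F + \max_j \|\E[Jx\mid x_{-I_j}]\|_2\bigr)\right] \le C\log n \exp\!\left(-c\,\frac{\min(t^2, t\|J\|_F)}{\|J\|_2^2}\right),
\]
which is exactly the stated bound. Since this is a direct specialization, there is no genuine obstacle; the only thing worth being careful about is that the conditioning sets $I_j$ in the single-parameter lemma are understood to be the same $I_j$'s from the general lemma (obtained via Lemma~\ref{lem:ising-subsample} with $\eta=1/2$ on $\Pr_{\beta^* J}$), and that the $(M,\gamma)$-boundedness used implicitly by Lemma~\ref{lem:hessian-oneparam} (through Lemmas~\ref{lem:bnd-expectation} and~\ref{lem:conc-mainpart}) holds under the assumptions of Theorem~\ref{thm:oneparam-formal}. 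Both of these are immediate, so the proof is a one-line reduction.
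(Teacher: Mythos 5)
Your proposal is correct and matches the paper's own treatment: the paper also obtains this lemma as a direct specialization of Lemma~\ref{lem:hessian-oneparam}, substituting $J^*$ with $\beta^* J$ and $A$ with $J$, with the sets $I_j$ coming from Lemma~\ref{lem:ising-subsample} applied with $\eta=1/2$. Your extra checks (symmetry, zero diagonal, $(M,\gamma)$-boundedness of $\Pr_{\beta^* J}$) are exactly the implicit hypotheses the paper relies on.
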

Lemmas \ref{lem:derivative-special-case} and \ref{lem:second-derivative-special-case} follow from Lemma~\ref{lem:derivative-one-concentration} and Lemma~\ref{lem:hessian-oneparam} by substituting $J^*$ with $\beta^* J$ and $A$ with $J$, and noting that the quantity $\frac{\partial \varphi(J\beta^*)}{\partial J}$ in Section~\ref{sec:pr-multiparam} corresponds to $\varphi'(\beta^*)$ in this section.
As a direct corollary, we can bound $\varphi'$ with respect to $\|Jx\|$:
\begin{lemma}\label{lem:oneparam-derivative}
	For any $t\ge1$, with probability at least $1-\log ne^{-ct}$,
	\[
	\left|2 \varphi'(\beta^*)\right|
	=\left|x^\top J x - \sum_{i=1}^n J_ix \tanh(\beta^*J_ix)\right| 
	\le Ct\|Jx\|_2 + Ct^2.
	\]
\end{lemma}
\begin{proof}
	The first equality follows from definition, hence we will prove the inequality. From Lemma~\ref{lem:derivative-special-case}, it holds that for any $t \ge 1$,
	\begin{align} \label{eq:222}
	&\Pr\left[\left|\varphi'(\beta^*)\right|>t\left(\|J\|_F + \max_j \|\E[Jx|x_{-I_j}]\|_2\right)\right]\\
	&\le C \log n \exp\lp(-c\min\lp(t^2,\frac{t\|J\|_F}{\|J\|_2}\rp)\rp)\\
	&\le C \log n \exp(-c\min(t^2,t))\\
	&= C \log n \exp(-ct),
	\end{align}
	since $\|J\|_F \ge \|J\|_2$ for all $J$ and since $t^2 \ge t$ for all $t\ge 1$. Second of all, from Lemma~\ref{lem:second-derivative-special-case}, for any $t \ge 1$, we have that
	\begin{align}\label{eq:223}
	&\Pr\left[\|Jx\|_2^2 < c\|J\|_F^2 + c\max_j \|\E[Jx\mid x_{-I_j}]\|_2^2 - t \left(\|J\|_F + \max_j \|\E[Jx\mid x_{-I_j}]\|_2\right)\right]\\
	&\le C \log n \exp\left(-c\frac{\min(t^2,t\|J\|_F)}{\|J\|_2^2}\right)\\
	&\le C \log n \exp(-c\min(t^2,t))\\
	&\le C \log n \exp(-ct).
	\end{align}
	using the fact that $\|J\|_F \ge \|J\|_2$ and that $\|J\|_2$ is at most some constant.
	
	Next, with probability at least $1-2\log ne^{-ct}$, both the events that their probabilities are estimated in \eqref{eq:222} and \eqref{eq:223} hold, and assume that they do hold till the rest of the proof.
	Let $\zeta = \|J\|_F + \max_j \|\E[Jx\mid x_{-I_j}]\|_2$ and we derive that $\left|\varphi'(\beta^*)\right|\le t\zeta$ and that $\|Jx\|_2^2 \ge c \zeta^2 -t\zeta$. It follows that
	\begin{equation} \label{eq:335}
	\left|\varphi'(\beta^*)\right| \le Ct\|Jx\|_2 + Ct^2.
	\end{equation}
	Indeed, if $t \le c\zeta/2$ then
	\[
	\|Jx\|_2^2
	\ge c\zeta^2/2
	\ge c (|\varphi'(\beta^*)|/t)^2/2
	\]
	hence
	\[
	|\varphi'(\beta^*)|
	\le \sqrt{2/c} t \|Jx\|_2.
	\]
	Otherwise,
	\[
	|\varphi'(\beta^*)|
	\le t \zeta
	\le 2t^2/c.
	\]
	This concludes the proof.
\end{proof}

Next, we lower bound $\|Jx\|_2^2$ in terms of the log partition function $F(\beta^*J)$, based on arguments from \cite{chatterjee2007estimation,bhattacharya2018inference}.
\begin{equation}\label{eq:22}
\|Jx\|_2^2
= \sum_{i=1}^n (J_ix)^2
\ge \sum_{i=1}^n \frac{J_ix \tanh(\beta^* J_ix)}{\beta^*},
\end{equation}
since for all $y \in \mathbb{R}$, $y$ and $\tanh(y)$ have the same sign and additionally, $|\tanh(y)| \le |y|$. From Lemma~\ref{lem:oneparam-derivative},
the right hand side of \eqref{eq:22} can be approximated by $x^\top J x/\beta^*$.
Further, the term $x^\top J x$ can be lower bounded by the log partition function:
\begin{lemma}
With probability at least $1-e^{-F(J\beta^*)/2}$, $x^\top J x \ge F(J\beta^*)/\beta^*$.
\end{lemma}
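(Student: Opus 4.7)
The plan is to exploit the explicit product form of the Ising density to get the tail bound essentially by counting. Writing out what the claim says, we have $\Pr_{\beta^* J}[x] = \exp(\beta^* x^\top J x/2 - F(\beta^* J) - n\log 2)$, so the event $\{x^\top J x < F(\beta^* J)/\beta^*\}$ (assuming $\beta^* > 0$, which may be taken WLOG by flipping the sign of $J$) is precisely $\{\beta^* x^\top J x / 2 < F(\beta^* J)/2\}$. On this event each individual atom has density bounded by $\exp(F(\beta^* J)/2 - F(\beta^* J) - n\log 2)$.

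The execution is then a one-line calculation: union-bounding over the bad set $B := \{x \in \{\pm 1\}^n : \beta^* x^\top J x < F(\beta^* J)\}$, and using $|B| \le 2^n$,
\begin{align*}
\Pr\bigl[\beta^* x^\top J x < F(\beta^* J)\bigr]
&= \sum_{x \in B} \exp\!\bigl(\beta^* x^\top J x/2 - F(\beta^* J) - n\log 2\bigr) \\
&\le 2^n \cdot \exp\!\bigl(F(\beta^* J)/2 - F(\beta^* J) - n \log 2\bigr)
= e^{-F(\beta^* J)/2}.
\end{align*}
Dividing the defining inequality by $\beta^*$ gives $x^\top J x \ge F(\beta^* J)/\beta^*$ on the complement, as claimed.

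There is essentially no obstacle; the only thing to remark is that we need $F(\beta^* J) \ge 0$ for the bound to be meaningful, which follows from Jensen's inequality applied to the uniform measure on $\{\pm 1\}^n$ together with the fact that $J$ has zero diagonal (so $\E_{\text{unif}}[x^\top J x] = 0$). The sign issue with $\beta^*$ can be absorbed by replacing $J$ with $-J$ if needed, since the Ising model and the quantity $F(\beta^* J)$ are unaffected by the joint sign flip $(\beta^*, J) \mapsto (-\beta^*, -J)$; alternatively, one can restate the inequality in the sign-free form $\beta^* x^\top J x \ge F(\beta^* J)$, which is what is actually used downstream in combination with Lemma~\ref{lem:oneparam-derivative} and the bound $y \tanh(\beta^* y)/\beta^* \ge 0$ used in~\eqref{eq:22}.
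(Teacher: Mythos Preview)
Your proof is correct and is essentially the same argument as the paper's: the paper phrases it as a Chernoff/Markov bound, computing $\E_{\beta^*J}[e^{-\beta^* x^\top J x/2}] = e^{-F(\beta^*J)}$ and then applying Markov's inequality to the event $\{e^{-\beta^* x^\top J x/2} > e^{-F(\beta^*J)/2}\}$, which unwinds to exactly your pointwise bound and union over $\{\pm 1\}^n$. Your remarks on the sign of $\beta^*$ and on $F(\beta^*J)\ge 0$ are fine and consistent with how the bound is used downstream.
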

\begin{proof}
	Recall that
	\[
	\Pr_{J\beta^*}[x] = 2^{-n} e^{\beta^* x^\top J x/2 - F(J\beta^*)}
	\]
	hence,
	\[
	\E_{J\beta^*}\left[e^{-\beta^*x^\top J x/2}\right]
	= \sum_{x} e^{-\beta^*x^\top J x/2} \Pr_{J\beta^*}[x]
	= e^{-F(J\beta^*)}.
	\]
	Therefore, by Markov's inequality,
	\begin{align*}
	\Pr_{J\beta^*}\left[\beta^*x^\top J x < F(J\beta^*)\right]
	&= \Pr_{J\beta^*}\left[e^{-\beta^*x^\top J x/2} > e^{-F(J\beta^*)/2}\right]\\
	&\le \E_{J\beta^*}[e^{-\beta^*x^\top J x/2}]/e^{-F(J\beta^*)/2}
	= e^{-F(J\beta^*)/2}.
	\end{align*}
\end{proof}

By the above lemmas, we derive the following:
\begin{lemma}\label{lem:bnd-partition}
	For any $1 \le t \le c\sqrt{F(J\beta^*)}$, with probability at least $e^{-ct}$,
	\[
	\frac{\varphi'(\beta^*)}{\|Jx\|_2^2}
	\le C t \beta^*/\sqrt{F(J\beta^*)}.
	\]
\end{lemma}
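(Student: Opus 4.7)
The plan is to combine three ingredients already established in this section: the derivative bound of Lemma~\ref{lem:oneparam-derivative}, the pointwise inequality~\eqref{eq:22} that $\sum_i (J_i x)\tanh(\beta^* J_i x) \le \beta^*\|Jx\|_2^2$, and the log-partition-based lower bound $\beta^* x^\top J x \ge F(\beta^* J)$ of the immediately preceding lemma. Chaining them produces a quadratic-in-$\|Jx\|_2$ inequality which forces $\|Jx\|_2 \gtrsim \sqrt{F(\beta^* J)}/\beta^*$, and the claimed ratio bound then follows immediately from Lemma~\ref{lem:oneparam-derivative}.

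First, I would condition on the intersection of two high-probability events. For the given $t$, Lemma~\ref{lem:oneparam-derivative} yields, with probability at least $1 - C\log n\cdot e^{-ct}$, that
\[
\bigl|x^\top J x - \textstyle\sum_i (J_i x)\tanh(\beta^* J_i x)\bigr| \le Ct\|Jx\|_2 + Ct,
\]
while the log-partition lemma gives, with probability at least $1 - e^{-F(\beta^* J)/2}$, that $\beta^* x^\top J x \ge F(\beta^* J)$. The assumption $t \le c\sqrt{F(\beta^*J)}$, together with the standing hypothesis $\sqrt{F(\beta^* J)} \ge C(M)(\log\log n + \log(1/\delta))$, ensures that $e^{-F(\beta^* J)/2}$ is dominated by $e^{-ct}$ (up to constants), so both events hold simultaneously with probability at least $1 - C'\log n \cdot e^{-c't}$. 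The case $\beta^* < 0$ reduces to $\beta^* > 0$ by replacing $J$ with $-J$, which leaves both sides of the target inequality invariant, so I assume $\beta^* > 0$.

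Second, on this event, write $s := \|Jx\|_2$, $F := F(\beta^* J)$, $\beta := \beta^*$. Chaining the three bounds,
\[
\beta s^2 \;\ge\; \textstyle\sum_i (J_i x)\tanh(\beta J_i x) \;\ge\; x^\top J x - Cts - Ct \;\ge\; F/\beta - Cts - Ct,
\]
so $\beta s^2 + Cts + Ct \ge F/\beta$. Since $t \le c\sqrt{F}$ and $\sqrt F$ is large, the additive $Ct$ is at most $F/(3\beta)$, giving $\beta s^2 + Cts \ge 2F/(3\beta)$. Either $\beta s^2 \ge F/(3\beta)$, whence $s \ge c_1\sqrt F/\beta$, or $Cts \ge F/(3\beta)$, whence $s \ge F/(3C\beta t) \ge c_1\sqrt F/\beta$ using $t \le c\sqrt F$. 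In every case $s \ge c_1\sqrt F/\beta$.

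Third, plugging this anti-concentration back into $|\varphi'(\beta^*)| \le \tfrac12(Cts + Ct)$ from Lemma~\ref{lem:oneparam-derivative} yields
\[
\frac{|\varphi'(\beta^*)|}{\|Jx\|_2^2}
\;\le\; \frac{Ct}{s} + \frac{Ct}{s^2}
\;\le\; \frac{C\beta t}{c_1\sqrt F} + \frac{C\beta^2 t}{c_1^2 F}
\;\le\; \frac{C' t\beta^*}{\sqrt{F(\beta^*J)}},
\]
where the last step absorbs the $1/F$ term into the $1/\sqrt F$ term using $\sqrt F \ge \beta^*$, which is again guaranteed by the lower bound on $\sqrt{F(\beta^*J)}$. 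The only technical step requiring care is verifying the constants in the ``either/or'' case analysis so as to extract a clean $\Omega(\sqrt F/\beta^*)$ lower bound on $\|Jx\|_2$; the rest is a direct concatenation of results already proved.
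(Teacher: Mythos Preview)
Your proposal is correct and takes essentially the same approach as the paper: chain \eqref{eq:22}, Lemma~\ref{lem:oneparam-derivative}, and the log-partition lower bound to obtain a quadratic-type inequality in $\|Jx\|_2$, extract $\|Jx\|_2 \ge c\sqrt{F(\beta^*J)}/\beta^*$, and then divide. The only cosmetic difference is that the paper solves the resulting quadratic $(\beta^*\|Jx\|_2)^2 + Ct\,\beta^*\|Jx\|_2 - F(\beta^*J)/2 \ge 0$ explicitly via the quadratic formula, whereas you use an either/or case split; both yield the same lower bound on $\|Jx\|_2$ and the remainder of the argument is identical.
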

\begin{proof}
We derive from \eqref{eq:22}, Lemma~\ref{lem:oneparam-derivative} and Lemma~\ref{lem:bnd-partition}, and from $t \le c \sqrt{F(\beta^*J)}$,
that with probability at least $\log n e^{-ct}$,
\begin{align}
\|Jx\|_2^2 
\ge \frac{1}{\beta^*}\sum_{i=1}^n J_ix \tanh(\beta^* J_ix)
&\ge \frac{x^\top J x - Ct\|Jx\|_2 - Ct^2}{\beta^*}
\ge \frac{F(J\beta^*)-Ct^2\beta^*}{(\beta^*)^2} - \frac{Ct\|Jx\|_2}{\beta^*}\notag\\
&\ge \frac{F(J\beta^*)/2}{(\beta^*)^2} -\label{eq:2second} \frac{Ct\|Jx\|_2}{\beta^*},
\end{align}
where the last inequality follows from the above assumption that $1\le t \le c\sqrt{F(J\beta^*)}$ for a sufficiently small $c >0$ and the $|\beta|$ is bounded by a constant.
Assume that \eqref{eq:2second} holds from now onward.
We derive that
\[
(\|Jx\|_2\beta^*)^2 + \|Jx\|_2\beta^* Ct - F(J\beta^*)/2 \ge 0.
\]
By solving the quadratic inequality for $\|Jx\|_2\beta^*$, we derive that
\[
\|Jx\|_2\beta^* \ge \frac{-Ct + \sqrt{C^2t^2 + 2F(J\beta^*)}}{2} 
\ge \sqrt{F(J\beta^*)/2} - Ct.
\]
Applying the bound $t\le c\sqrt{F(J\beta^*)}$, we derive that \begin{equation}\label{eq:Jx-bnd}
\|Jx\|_2 \ge \sqrt{F(J\beta^*)}/(2\beta^*).
\end{equation}

Further, from Lemma~\ref{lem:oneparam-derivative}, with probability at least $\log ne^{-ct}$,
\begin{equation}\label{eq:2der}
\varphi'(\beta^*) \le Ct\|Jx\|_2 + Ct,
\end{equation}
and assume that this holds for the rest of the proof.
By \eqref{eq:Jx-bnd} and \eqref{eq:2der},
\begin{equation} \label{eq:12}
\frac{\varphi'(\beta^*)}{\|Jx\|_2^2}
\le C \frac{t\|Jx\|_2+t}{\|Jx\|_2^2}
= \frac{Ct}{\|Jx\|_2} + \frac{Ct}{\|Jx\|_2^2}
\le \frac{2\beta^*Ct}{\sqrt{F(J\beta^*)}} + \frac{4(\beta^*)^2Ct}{F(J\beta^*)}
\le \frac{C't\beta^*}{\sqrt{F(J\beta^*)}},
\end{equation}
by the assumption of this lemma that $\sqrt{F(J\beta^*)}$ is at least a constant and since $\beta^*$ is bounded by a constant.
\end{proof}

By \eqref{eq:oneparam-finalbound} and by Lemma~\ref{lem:bnd-partition}, Corollary~\ref{cor:one-param-informal} follows, by taking $t = \log \log n + \log(1/\delta)$.
\section{The Lower Bound on $\|\hat{J} - J^*\|_F$}\label{s:lower_bound}
The estimation error of Theorem~\ref{t:general} is optimal in many interesting applications. In this Section, we present the proof of the general lower bound on the estimation rate of $\|\hat J - J^*\|$ that only depends on the packing numbers of the set $\mathcal{J}$. It is well known that the packing numbers are very related to the covering numbers of a set. We note that the expression for the lower bound does not match the one from the upper bound. However, this is to be expected, since for arbitrary sets $\mathcal{J}$ we might need more information other than the covering number to characterize learnability. However, in the particular case where $\mathcal{J}$ is a linear $k$-dimensional
subspace of matrices, 
the lower bound becomes (nearly)-optimal up to constants when $\|J^*\|<1$. This implies that doing MPLE in high temperature is optimal. 
We first present the general tool for proving our lower bounds, which is going to be Fano's method. The following result is adapted from \cite{duchi2016lecture}. 

\begin{lemma}[\cite{khas1979lower}, Fano's method] \label{l:fano}
Let $\mathcal{P} = \{P_J \colon J \in \mathcal{J}\}$ be a family of distributions indexed by matrices $J \in \mathcal{J}$. For a distribution $P$ from this family, denote $J(P)$ the interaction matrix corresponding to it. Let $\{P_v\}_{v \in \mathcal{V}}$ be a finite subset of distributions from $\mathcal{P}$ such that for any $v \neq v' \in \mathcal{V}$ we have $\|J(P_v) - J(P_{v'})\|_F \geq 2r$ for some $r > 0$ (this family is also called a $2r$-packing for $\|\cdot\|_F$). Assume that $V$ is a random variable that is uniform on the set $\mathcal{V}$, and conditional on $V = v$, and we draw a sample $X \sim P_v$. Then the miminax risk for estimation with respect to the Frobenius norm is lower bounded by
$$
r\lp(1 - \frac{I(V;X) + \log 2}{\log|\mathcal{V}|}\rp)
$$
\end{lemma}

A standard way of using Fano's method is finding a $r > 0$ and a family $\{P_v\}_{v \in \mathcal{V}}$ such that 
$$
\frac{I(V;X) + \log 2}{\log|\mathcal{V}|} \leq \frac{1}{2}
$$
This would immediately imply the lower bound $r/2$. 
To prove this statement, we need to upper bound the mutual information $I(V;X)$. If the mutual information is small, then this means that by seeing the sample $X$ we cannot infer with large probability the distribution $V$ of the family that generated it. 
To bound this quantity, we will use the following inequality, whose simple proof can be found in Chapter 7 of \cite{duchi2016lecture}.
\begin{equation} \label{eq:mutual-kl}
I(V;X) \leq \frac{1}{|\mathcal{V}|^2}\sum_{v,v'}D_{KL}(P_v\|P_{v'})
\end{equation}
Thus, if we can bound the KL-divergence between all pairs in the family, we also have the same bound for $I(V;X)$. The following Lemma aims to provide such a bound, when the two models are in high temperature.

\begin{lemma}\label{l:klbound}
Suppose $P_{J_1},P_{J_2}$ be the distributions of two Ising models with 
interaction matrices $J_1$ and $J_2$, respectively. Suppose furthermore that $\|J_1\|_\infty, \|J_2\|_\infty < 1 - \alpha$ for some $\alpha \in (0,1)$. Then, there is a constant $C = C(\alpha)$ such that
$$
D_{KL}(P_{J_1}\|P_{J_2}) \leq C \|J_2 - J_1\|_F^2
$$
\end{lemma}
\begin{proof}
In the following computations, the concept of the log partition function will be useful. We thus define 
$$
F(J) = \ln \sum_{y \in \{-1,+1\}^n}\exp\lp(\frac{1}{2} y^\top Jy\rp)
$$
which is the log partition function of a model with interaction matrix $J$. 
We also define the following function $g:[0,\|J_2 - J_1\|_F]\to \R$ by
$$
g(t) = F\lp(J_1 + tS\rp), \quad \text{where }
S = \frac{J_2 - J_1}{\|J_2 - J_1\|_F}.
$$
First, we notice that $g(0) = F(J_1), g(\|J_2 - J_1\|_F) = F(J_2)$.
Also, some simple calculations show that
\begin{align*}
g'(t) = \E_{y \sim J_1 + tS}\lp[\frac{1}{2} y^\top Sy\rp]\\
g''(t) = \Var_{y \sim J_1 + tS}\lp[\frac{1}{2} y^\top Sy\rp]
\end{align*}
where  the notation $y \sim A$ 
means that $y$ is sampled from an Ising model with interaction matrix $A$.
Now, we do some simple calculations with the KL-divergence
\begin{align*}
D_{KL}(P_{J_1}\|P_{J_2}) &= 
\E_{y\sim P_{J_1}} 
\ln \frac{P_{J_1}(y)}{P_{J_2}(y)}\\
&= \E_{y\sim P_{J_1}}
\lp[\frac{1}{2}y^\top (J_1 - J_2) y - F(J_1) + F(J_2)\rp]\\
&=F(J_2) - F(J_1) -  \|J_2 - J_1\|_F\E_{y \sim J_1}\lp[\frac{y^\top S y}{2}\rp] \\
&= g(\|J_2 - J_1\|_F) - g(0) - \|J_2 - J_1\|_Fg'(0)\\
&= \frac{\|J_2 - J_1\|_F^2}{2} g''(\xi)
\end{align*}
for some $\xi \in [0, \|J_2 - J_1\|_F]$. In the last step, we used Taylor's Theorem on $g$. We have
$$
\frac{\|J_2 - J_1\|_F^2}{2} g''(\xi) = \frac{\|J_2 - J_1\|_F^2}{2}  \Var_{y \sim J_1 + \xi S}\lp[\frac{1}{2} y^\top Sy\rp]=
\frac{1}{2} \Var_{y \sim J_1 + \xi S}\lp[\frac{1}{2} y^\top (J_2 - J_1)y\rp] \enspace.
$$
Hence, it suffice to bound this variance in order to obtain a bound on the KL-divergence. This is the variance of a second degree polynomial of an Ising model with interaction matrix $J_1 + \xi S$. It is easy to see
that as $t$ varies in $[0, \|J_2 - J_1\|_F]$, $J_1 + tS$ moves along the line segment connecting $J_1,J_2$. Hence, $J_1 + \xi S$ also belongs in this line segment. Since $\|J_1\|_\infty, \|J_2\|_\infty < 1 - \alpha$, it follows that any point in the line segment connecting them has infinity norm upper bounded by $1 - \alpha$. 
We conclude that $\|J_1 + \xi S\|_\infty$ defines an Ising model in high temperature. The task is thus to bound a second degree polynomial of an Ising model in high temperature. Using the bound of Theorem 2.1 in \cite{gheissari2018concentration} we obtain that there exists a constant $C = C(\alpha)$ such that
$$
\Var_{y \sim J_1 + \xi S}\lp[\frac{1}{2} y^\top (J_2 - J_1)y\rp] \leq C(\alpha) \|J_2 - J_1\|_F^2\enspace.
$$
This implies the result of this Lemma.
\end{proof}

Now that we have a bound for KL-divergence between two Ising models, we can prove our general lower bound.
\lb*
\begin{proof}
The proof is essentially an application of the Fano method when we use the upper bound on the KL proven in Lemma~\ref{l:klbound}. Consider any $r > 0$ that satisfies the requirements of the Theorem.
Then, by definition, there exists an $R>0$, such that the distance of any two matrices in $\mathcal{J}$ is bounded by $R$ in Frobenius norm. We need to define the following concept.

\begin{definition}[\cite{vershynin2018high}]
A subset $L$ of a metric space $(T,d,\epsilon)$ is $\epsilon$-\emph{separated} if $d(x,y) > \epsilon$ for any distinct $x,y \in L$. The largest possible cardinality of an $\epsilon$ separated subset of a given set $L \subseteq T$ is called the \emph{packing number} of $L$ and denoted by $\mathcal{P}(L,d,\epsilon)$ 
\end{definition}

The packing number of a set is the maximum amount of points that we can choose so that all of them are far from each other. 
Notice that this concept is very related to the requirements of Fano method. Indeed, to apply it we need a set of distributions that are far from each other. 

To use these concepts, let $\mathcal{V}$ be a maximum cardinality $2r$ separated subset of $\mathcal{J}$ with respect to the Frobenius norm. This means that
$$
|\mathcal{V}| = \mathcal{P}(\mathcal{J} ,\|\cdot\|_F,2r)
$$
Then, if $X,V$ are defined as in Lemma~\ref{l:fano}, we have that 
$$
I(V;X) \leq  \frac{1}{|\mathcal{V}|^2}\sum_{v,v' \in \mathcal{V}}D_{KL}(P_v\|P_{v'}) \leq C(\alpha) R^2
$$
which follows from Eq.~\eqref{eq:mutual-kl} and Lemma~\ref{l:klbound} since $\mathcal{V} \subseteq \mathcal{J}$ and $\mathcal{J}$ has diameter at most $R$ in Frobenius norm.

Now, by the assumption of this Theorem, we conclude that
$$
I(V;X) + \log 2 \leq C(\alpha)R^2 + \log 2 \leq \frac{\log\mathcal{N}(\mathcal{J},\|\cdot\|_F,2r)}{2}.
$$
We would like to have $|\mathcal{V}|$ appear on the right hand side. 
The covering and packing numbers of a set are connected by the following simple inequality, whose proof can be found in Chapter 4 of \cite{vershynin2018high}.
$$
\mathcal{N}(\mathcal{J},\|\cdot\|_F,2r) \leq \mathcal{P}(\mathcal{J},\|\cdot\|_F,2r)
$$
Hence, we conclude that
$$
I(V;X) + \log 2 \leq \frac{\log\mathcal{P}(\mathcal{J},\|\cdot\|_F,2r)}{2} = \frac{\log |\mathcal{V}|}{2}
$$
Hence, we have that
$$
\frac{I(V;X) + \log 2}{\log |\mathcal{V}|} \leq \frac{1}{2}
$$
Thus, by Lemma~\ref{l:fano} we conclude that the minimax risk is at least $r/2$, which concludes the proof.

\end{proof}

Theorem~\ref{t:lb} offers a lower bound in terms of the covering numbers. However, the exact dependence of the minimax risk on the covering numbers is not clear from this general formulation. We now offer a simple Corollary of this theorem that gives (nearly)-optimal lower bounds when $\mathcal{J}$ is a linear subspace of matrices with bounded infinity norm.

\lowerbound*

\begin{proof}
 Let $r = c_1\sqrt{k}$, where $c_1$ will be determined later.
We will prove that $r$ satisfies all the requirements of Theorem~\ref{t:lb} for some suitable constant $c_1$. 
Suppose that we renormalize every matrix $J_i$ as follows
$$
J_i ' = \frac{1}{2\|J_i\|_F} J_i
$$

We know that $\|J_i\|_F \geq k$ for all $i$.
Thus, for any set of $\alpha_i \in [-1,1]$, we have that
$$
\lp\|\sum_{i=1}^k \alpha_i J_i'\rp\|_\infty \leq k\frac{1}{2k} = \frac{1}{2}
$$
hence $\sum_{i=1}^k \alpha_i J_i' \in \mathcal{J}$.
Also
$$
\|J_i'\|_F = \frac{1}{2}.
$$
Now consider the family 
$$
\mathcal{V} = \lp\{\sum_{i=1}^k \sigma_i J_i' : \sigma_i \in \{-1,1\}\rp\}
$$
We will prove that the minimax risk for estimation in this family is lower bounded. Since $\mathcal{V} \subseteq \mathcal{J}$, this would immediately imply a lower bound for the estimation rate on $\mathcal{J}$.
Since the supports of $J_i$ are disjoint, these matrices are orthogonal with respect to the trace inner product.
This means that the maximum distance $R$ between any pair of matrices in $\mathcal{V}$ is at most
$
2\sqrt{k}
$.
We will show that we can find a large number of matrices in $\mathcal{V}$ which are all at distance at least $r$ from each other. This is equivalent to finding a lower bound for the packing number of $\mathcal{V}$.

To do so, we take a packing of the hypercube $\{-1,1\}^k$ with respect to the Hamming distance. Namely, we take a set $U \subseteq \{-1,1\}^k$ such that for any $\sigma,\sigma' \in U$, the Hamming distance between $\sigma$ and $\sigma'$, $\|\sigma-\sigma'\|_1/2$, is at least $k/3$. It is known that there exists such a set $\mathcal U$ with cardinality $e^{ck}$ for some universal constant $c>0$.

Given the set $\mathcal U$, we take the following set $\mathcal{V}'$ to be our $r$-packing of $\mathcal{V}$:
\[
\mathcal{V}' = \lp\{ \sum_{i=1}^k \sigma_i J'_i\colon \sigma \in \mathcal U\rp\}.
\]
Since the matrices $J'_i$ have disjoint support, they are orthonormal with respect to the Forbenious norm, hence for any $\sigma \ne \sigma' \in \mathcal U$,
\[
\lp\| \sum_{i=1}^k \sigma_i J'_i - \sum_{i=1}^k \sigma_i' J'_i\rp\|_F^2
=\sum_{i=1}^k \lp\| (\sigma_i-\sigma_i')^2 J'_i\rp\|_F^2
= \frac{1}{4}\sum_{i=1}^k (\sigma_i-\sigma_i')^2
\ge \frac{k}{3},
\]
using the fact that each $J'_i$ has Forbenius norm of $1/2$, and that $\sigma,\sigma' \in \mathcal U$, hence they differ in at least $k/3$ coordinates.
%
Hence, by setting $r = \sqrt{k/3}/ 2$, we just found a $2r$ packing on the set $\mathcal{V}$ that contains at least $e^{ck}$ elements. Hence,
$$
\log \mathcal{P}(\mathcal{V},\|\cdot\|_F, 2r) \geq ck
$$
for some $c$. Now notice that in order to apply Theorem~\ref{t:lb}, the inequality that we have to satisfy is
\begin{equation}\label{eq:lb-P}
C(\alpha)R^2 + \log 2 \leq \frac{\log\mathcal{P}(\mathcal{V},\|\cdot\|_F,2r)}{2},
\end{equation}
where $R = \max_{J\ne J' \in \mathcal{V}} \|J-J'\|_F$
(in fact, the statement of Theorem~\ref{t:lb} requires a similar inequality with $\mathcal{P}$ replaced by $\mathcal{N}$, but it follows from the proof that Eq.~\eqref{eq:lb-P} suffices).
We essentially want to reduce $R$ to a suitable multiple of $\sqrt{k}$ so that this inequality will hold. To do that, we simply scale down the $J_i'$ by a suitable constant. Of course, we scale down $r$ by the same constant. Notice that the packing numbers will not be affected by this, since we just scaled the whole space. Hence, the right hand side remains the same, while the left hand side is properly reduced. Hence, this inequality will hold for $r = c_1\sqrt{k}$ for some suitable $c_1$.
This means by Theorem~\ref{t:lb} that the minimax risk of estimation in Frobenius norm is $\Omega(\sqrt{k})$.
\end{proof}
\subsection*{Acknowledgements.}
We would like to thank Dheeraj Nagaraj for interesting discussions and help in proving the lower bound, and Frederic Koehler for the helpful discussion on the log partition function.

This work was supported by NSF Awards IIS-1741137, CCF-1617730
and CCF-1901292, by a Simons Investigator Award, by the DOE PhILMs project (No. DE-AC05-76RL01830), and by the DARPA award HR00111990021.
\appendix
\section{Algorithm for Maximizing Pseudo-Likelihood}
\label{sec:optimization}
In this Section, we show that there exists a polynomial time algorithm for the MPLE, which is the content of  Lemma~\ref{lem:optimization}.
The central problem is maximizing the pseudo-likelihood
with the constraint that the matrix we output has
low infinity norm.
To achieve this, we use a regularizer of the form
$\lambda\max(0,\|A_{\pmb{\beta}}\|_\infty - M)$. This
ensures that the optimal solution that the algorithm outputs 
will have small infinity norm. 
We should also argue that the output
of the regularized procedure is close to the
minimizer of $\varphi$.
To do this,
we need to calculate bounds for $\varphi$ and its 
derivative. Since the regularized part
is not differentiable, we have to use subgradient
optimization methods. 
\begin{lemma}\label{lem:optimization}
	There exists an algorithm that outputs $\hat{J}$ satisfying $\hat{J} \in V$ and $\|\hat{J}\|_\infty \le 2M$, such that 
	$$
	\varphi(\hat{J}) \le \varphi(J^*) + \epsilon
	$$
	
	and runs in time $O(k^2n^6M^2/\epsilon^2)$.
\end{lemma}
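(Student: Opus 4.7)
The plan is to compute a regularized maximum pseudo-likelihood estimator by projected subgradient descent in the $k$-dimensional coordinate system of $\mathcal{V}$. Fix a basis $A_1,\dots,A_k$ of $\mathcal{V}$ orthonormal with respect to the Frobenius inner product, parametrize $J=\sum_l \beta_l A_l$ so that $\|J\|_F=\|\beta\|_2$, and define the regularized objective $\tilde\varphi(J) := \varphi(J) + \lambda R(J)$, where $R(J) := \sum_{i=1}^n (\|J_i\|_1 - M)_+$ is a convex soft penalty for violating $\|J\|_\infty \le M$ and $\lambda$ will be chosen below. Since $\|J\|_\infty\le 2M$ implies $\|J\|_F\le 2M\sqrt{n}$, I would restrict iterates to the Euclidean ball $\mathcal{B} := \{\beta\in\mathbb{R}^k \colon \|\beta\|_2\le 2M\sqrt{n}\}$, onto which projection is trivial, and run projected subgradient descent on $\tilde\varphi$ over $\mathcal{B}$ for $T$ steps, returning the averaged iterate $\hat J$.

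Convexity of $\tilde\varphi$ is immediate: $\log\cosh$ is convex, $J\mapsto J_i x$ and $J\mapsto x_iJ_i x$ are linear, and $(\|J_i\|_1-M)_+$ is a nondecreasing convex function composed with the convex map $J\mapsto \|J_i\|_1$. Standard subgradient analysis then yields $\tilde\varphi(\hat J)-\min_{\mathcal{B}}\tilde\varphi \le LD/\sqrt{T}$, where $D = O(M\sqrt{n})$ is the diameter of $\mathcal{B}$ and $L$ bounds subgradient norms. Using $|\partial \varphi/\partial A|, |\partial R/\partial A| \le \sum_i |(A)_i x| \le \|A x\|_1 \le n$ for any unit-Frobenius direction $A\in\mathcal{V}$ (since $x\in\{\pm 1\}^n$ and $\|A\|_F=1$), one gets $L = O((1+\lambda)n)$. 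Each iteration needs a matrix-vector product $Jx$ in $O(n^2)$, a vector of $\tanh$ evaluations in $O(n)$, one directional derivative per basis direction in $O(kn^2)$ total, and an $O(k)$ projection, so taking $\lambda$ polynomially large and $T=\Theta(L^2 D^2/\epsilon^2)$ yields the claimed $\mathrm{poly}(k,n,M,1/\epsilon)$ runtime.

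It remains to verify the two output guarantees. Since $J^*\in\mathcal{J}$ gives $R(J^*)=0$, we have $\tilde\varphi(J^*)=\varphi(J^*)$, so the averaged iterate satisfies $\varphi(\hat J) \le \tilde\varphi(\hat J) \le \varphi(J^*)+\epsilon$, which is the second conclusion. For the infinity-norm bound, the elementary estimate $\varphi(J^*)=O(nM)$ (each summand is $O(M)$ when $|J^*_i x|\le M$) combined with $\tilde\varphi(\hat J)\le\varphi(J^*)+\epsilon$ yields $\lambda R(\hat J)\le O(nM)+\epsilon$, bounding $R(\hat J)$ by a constant multiple of $M$ once $\lambda = \Theta(n)$. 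The main obstacle I anticipate is tightening ``$R(\hat J)=O(M)$'' into the target ``$\|\hat J\|_\infty \le 2M$''; the cleanest fix is to replace $R$ by its hinge-squared variant $\sum_i (\|J_i\|_1-M)_+^2$ and use a first-order optimality argument at $\hat J$: at any candidate with some $\|J_i\|_1 > 2M$, the outward subgradient of $\lambda R$ along the direction decreasing $\|J_i\|_1$ is at least $2\lambda M$, while the corresponding derivative of $\varphi$ is bounded by $n$, so choosing $\lambda$ polynomially large in $n$ forces $\|\hat J\|_\infty \le 2M$ at every approximate minimizer and completes the proof.
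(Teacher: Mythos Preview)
Your approach is essentially the paper's: parametrize $\mathcal{V}$ via an orthonormal basis, add a convex soft penalty for violating $\|J\|_\infty\le M$ (the paper uses $5n\cdot(\|J\|_\infty-M)_+$, you use the row-summed hinge $\sum_i(\|J_i\|_1-M)_+$; either works), and run subgradient descent with iteration count $L^2D^2/\epsilon^2$ where $L=\mathrm{poly}(n,k)$ and $D=O(M\sqrt{n})$.

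Two small corrections to the last paragraph. First, the step ``$\lambda R(\hat J)\le O(nM)+\epsilon$'' tacitly uses $\varphi(\hat J)\ge 0$, which holds since $\varphi$ is a sum of negative log conditional probabilities; once noted, your sum-hinge already bounds each single term, $(\|\hat J_i\|_1-M)_+\le R(\hat J)$, so taking $\lambda$ a sufficiently large constant multiple of $n$ directly yields $\|\hat J\|_\infty\le 2M$ with no further device. Second, the ``first-order optimality'' fix you propose is both unnecessary and flawed: subgradient descent with averaging returns a point with small \emph{function gap}, not small subgradient, so stationarity conditions at $\hat J$ need not hold; moreover, the ``direction decreasing $\|J_i\|_1$'' you invoke need not lie in the $k$-dimensional subspace $\mathcal{V}$. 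The function-value comparison you already sketched is the right tool, and it is exactly how the paper argues (comparing $h(\beta)$ at large $\|A_\beta\|_\infty$ to $h(\mathbf{0})=n\log 2$).
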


\begin{algorithm}\label{algorithm}
\caption{Optimization Procedure}
    \hspace*{\algorithmicindent} \textbf{Input:}\\ 
    \hspace*{\algorithmicindent}\hspace*{\algorithmicindent} Basis of matrices $J_1,\ldots,J_k$\\
    \hspace*{\algorithmicindent}\hspace*{\algorithmicindent} Accuracy $\epsilon$\\
    \hspace*{\algorithmicindent}\hspace*{\algorithmicindent} Sample $(x_1,\ldots,x_n)$\\
    \hspace*{\algorithmicindent}\hspace*{\algorithmicindent} Infinity norm bound $M$\\
    \hspace*{\algorithmicindent} \textbf{Output:}\\ 
     \hspace*{\algorithmicindent}\hspace*{\algorithmicindent} Vector $\hat{\beta}$
\begin{algorithmic}[1]
\STATE $(A_1,\ldots,A_k) := $ \textsc{GRAM-SCHMIDT}$(J_1,\ldots,J_k)$ \COMMENT{Find orthonormal basis}
\FOR {i := $1, \ldots,k$} 
\STATE $\beta_i^0 := 0$ \COMMENT{Initializations}
\ENDFOR
\STATE $T = \frac{M^2n^4k}{\epsilon^2}$ \COMMENT{Number of steps of gradient descent}
\STATE $\eta := \frac{M}{n\sqrt{k}\sqrt{T}}$ \COMMENT{stepsize}
\FOR {t := $1,\ldots,T$}
\STATE $U := \beta_1^t A_1 + \ldots \beta_k^t A_k$ \COMMENT{The interaction matrix at time $t$}
\FOR {i := $1,\ldots,k$}
\STATE $S_i := 0$
\FOR{j := $1,\ldots,n$}
\STATE $S_i:= S_i + ((A_i)_jx)\lp(\tanh(U_jx) - x_j\rp)$\COMMENT{Compute the derivative}
\ENDFOR
\ENDFOR
\IF{$\|U\|_\infty \ge M$}
\STATE $MAX := -\infty$ \COMMENT{If regularized part is nonzero, also need subgradient}
\STATE $ARGMAX := 1$ \COMMENT{First find max row sum}
\FOR{l := $1,\ldots,n$}
\STATE $TEMP := 0$
\FOR{v := $1,\ldots,n$}
\STATE $TEMP := TEMP + |U_{lv}|$
\ENDFOR
\IF{$TEMP > MAX$}
\STATE $MAX := TEMP$
\STATE $ARGMAX := l$
\ENDIF
\ENDFOR
\FOR{i := $1,\ldots, k$}
\FOR{v := $1,\ldots,n$}
\STATE $S_i := S_i + 5n \cdot \mathrm{sgn}(\beta_i^t)\cdot\|(A_i)_{MAX,v}|$\COMMENT{compute subgradient for max row}
\ENDFOR
\ENDFOR
\ENDIF
\FOR {i := $1,\ldots,k$}
\STATE $\beta_i^{t+1} = \beta_i^t - \eta S_i$\COMMENT{Iteration of subgradient descent}
\ENDFOR
\ENDFOR
\STATE \textbf{Output} $\hat{\beta} := \frac{1}{T}\sum_{t=1}^T \beta^t$
\end{algorithmic}
\end{algorithm}

\begin{proof}
The algorithm essentially solves the constrained optimization
problem described in ~\ref{eq:pmle}.
To design the optimization algorithm, it will be more 
convenient to work with a specific base of matrices
that span $\mathcal{V}$. For this reason,
let $\{A_1,\ldots,A_k\}$ be a fixed orthonormal basis of $\mathcal{V}$ with
respect to the inner product induced by the frobenius norm.
Then, each $J \in \mathcal{V}$ can be uniquely written in the form 
$$
J = \sum_{i=1}^k \beta_i A_i
$$
We use the notation $\pmb{\beta} = (\beta_1,\ldots,\beta_k)$
and $A_{\pmb{\beta}} = \sum_{i=1}^k \beta_i A_i$.
Thus, minimizing $\varphi(J)$ subject to $J \in \mathcal{J}$
is the same as minimizing 
$$
\psi(\pmb{\beta}) = \varphi(A_{\pmb{\beta}})
$$
subject to the constraint $\|A_{\pmb{\beta}}\|_\infty \le M$.
Thus, in order to solve ~\ref{eq:pmle}, we can equivalently
solve 
\begin{equation}\label{eq:alt_pmle}
    \arg\min_{\pmb{\beta}:\|A_{\pmb{\beta}}\|_\infty \le M}
    \psi(\pmb{\beta})
\end{equation}
One way to solve this is to use constrained
optimization.
 However, we would like to avoid the complicated
projection procedure associated with this strategy.
We note that projecting to the set of matrices with 
low infinity norm does not directly translate to a similar
procedure in the space of $\mathbf{\beta}$,
since $A_i$ might have large infinity norm.
Instead, we will solve the following regularized
optimization problem:
$$
\arg\min_{\pmb{\beta}} \lp(\psi(\pmb{\beta})+ \lambda\max(0,\|A_{\pmb{\beta}}\|_\infty - M)\rp)
$$
Set $h(\pmb{\beta}) = \psi(\pmb{\beta})+ \lambda\max(0,\|A_{\pmb{\beta}}\|_\infty - M)$, which is
clearly a convex function.
We will describe a way to pick $\lambda$ shortly after.
Denote by $\pmb{\beta}_1$ the solution of this optimization problem and by $\pmb{\beta}^*$ the one corresponding to $J^*$.
Also, suppose the algorithm outputs a point $\hat{\pmb{\beta}}$
for accuracy $\epsilon$.
The details are given in Algorithm~\ref{algorithm}.
We will prove that
$\|A_{\pmb{\beta}_1}\|_\infty, \|A_{\hat{\pmb{\beta}}}\|_\infty \le 3M$.
First, for $\pmb{\beta} = \pmb{0}$ we have
$\|A_{\pmb{0}}\|_\infty \le M$
and by equation~\ref{eq:pmleform} we get $\psi(\pmb{0}) =n\log2$.

We set $\lambda = 5n$. We now examine
what the value of the function is $\pmb{\beta}$ such that 
$\|A_{\pmb{\beta}}\|_\infty \ge 3M$.
If we manage to show that it is greater than
the value at $0$, then it is clear that the minimizer will
not lie in this set. 
Using the inequality $\cosh(x) \ge \exp{-|x|}$ and equation~\ref{eq:pmleform} we have:
$$
\psi(\pmb{\beta}) =
\sum_{i=1}^n \lp(\log\cosh\lp((A_{\pmb{\beta}})_ix\rp) - x_i (A_{\pmb{\beta}})_ix + \log 2\rp)
\ge \sum_{i=1}^n \lp(-|(A_{\pmb{\beta}})_ix| - x_i(A_{\pmb{\beta}})_ix + \log 2\rp)
$$
Using the triangle inequality and the fact that
$x$ is a $\{+1, -1\}^n$ vector, we have:
$|(A_{\pmb{\beta}})_ix| \le \|A_{\pmb{\beta}}\|_\infty$.
Also, since $\|A_{\pmb{\beta}}\|_2 \le \|A_{\pmb{\beta}}\|_\infty$ we have:
$$
\sum_{i=1}^n x_i(A_{\pmb{\beta}})_ix = x^\top A_{\pmb{\beta}}x
\le \|A_{\pmb{\beta}}\|_2 \|x\|_2^2 = n\|A_{\pmb{\beta}}\|_2 \le n \|A_{\pmb{\beta}}\|_\infty
$$
Overall, we get:
$$
\psi(\pmb{\beta}) \ge -n\|A_{\pmb{\beta}}\|_\infty -n\|A_{\pmb{\beta}}\|_\infty + n\log 2 = -2n \|A_{\pmb{\beta}}\|_\infty + n\log 2
$$
So, the value of the optimized function at $\pmb{\beta}$ is
$$
h(\beta) = \psi(\beta) + 5n (\|A_{\pmb{\beta}}\|_\infty - M)
\ge  -2n \|A_{\pmb{\beta}}\|_\infty + n\log 2 
+3n \|A_{\pmb{\beta}}\|_\infty = n\log 2 = h(\pmb{0})+ n\|A_{\pmb{\beta}}\|_\infty
$$
Hence, we conclude that $\|A_{\pmb{\beta}_1}\|_\infty \le 3M$.
Moreover, we have
$$
h(\hat{\pmb{\beta}}) - h(\pmb{\beta}_1)\le \epsilon \le Mn
\implies h(\hat{\pmb{\beta}}) \le Mn + n\log 2
\implies \|A_{\hat{\pmb{\beta}}}\|_\infty \le 3M
$$
As for the guarantee of the algorithm, we notice that:
$$
\psi(\hat{\pmb{\beta}}) \leq h(\hat{\pmb{\beta}})
\leq h(\pmb{\beta}_1) + \epsilon
\le h(\pmb{\beta}^*) + \epsilon = \psi(\pmb{\beta}^*) + \epsilon
$$
since $\|A_{\pmb{\beta}}^*\|_\infty \leq M$.
Thus, if we denote by $\hat{J} = A_{\hat{\pmb{\beta}}}$,
we conclude that:
$$
\varphi(\hat{J}) \le \varphi(J^*) + \epsilon
$$
which is what we wanted to prove.
It remains now to argue about the computational complexity
of the procedure. The algorithm we will use is subgradient descent, which performs reasonably well when the subgradient is
upper bounded.
The following Theorem can be found in \cite{bubeck2015convex}.
\begin{lemma}[\cite{bubeck2015convex}]\label{t:gd}
Suppose $f$ is a convex function with minimum $x^*$ and $g \in \partial f$ is
a subgradient such that $\|g\| \le L$. Suppose we are running the
following iterative procedure:
$$
x_{t+1} = x_t - \eta g(x_t)
$$
with $\|x_1 - x^*\| \le R$.
If we choose $\eta = R/(L\sqrt{t})$
then 
$$
f\lp(\frac{1}{t} \sum_{i=1}^t f(x_i)\rp) - f(x^*) \leq \frac{RL}{\sqrt{t}}
$$
\end{lemma}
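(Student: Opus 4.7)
The plan is to execute the standard analysis of the projected/unconstrained subgradient method, which proceeds by tracking the squared distance to $x^*$ across iterations and exploiting the subgradient inequality from convexity.

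First I would expand
\[
\|x_{t+1} - x^*\|^2 = \|x_t - \eta g(x_t) - x^*\|^2 = \|x_t - x^*\|^2 - 2\eta\, g(x_t)^\top (x_t - x^*) + \eta^2 \|g(x_t)\|^2.
\]
Because $g(x_t) \in \partial f(x_t)$, the defining subgradient inequality $f(x^*) \ge f(x_t) + g(x_t)^\top(x^* - x_t)$ rearranges to $g(x_t)^\top (x_t - x^*) \ge f(x_t) - f(x^*)$. Substituting and using $\|g(x_t)\| \le L$ gives the one-step bound
\[
f(x_t) - f(x^*) \le \frac{1}{2\eta}\bigl( \|x_t - x^*\|^2 - \|x_{t+1} - x^*\|^2 \bigr) + \frac{\eta L^2}{2}.
\]

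Next I would telescope this inequality from $t=1$ to $t=T$. The telescoping collapses the distance terms to $\|x_1 - x^*\|^2 - \|x_{T+1} - x^*\|^2 \le R^2$, yielding
\[
\sum_{t=1}^T \bigl( f(x_t) - f(x^*) \bigr) \le \frac{R^2}{2\eta} + \frac{\eta L^2 T}{2}.
\]
Dividing by $T$ and applying Jensen's inequality (equivalently, convexity of $f$) to the average iterate $\bar x = \frac{1}{T}\sum_{t=1}^T x_t$ bounds $f(\bar x) - f(x^*)$ by the same right-hand side divided by $T$. Optimizing the step size by setting $\eta = R/(L\sqrt{T})$ balances the two terms and produces $f(\bar x) - f(x^*) \le RL/\sqrt{T}$, which is the claim.

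No step here is genuinely hard; the only thing to be mindful of is the sign of the subgradient inequality (ensuring it is used in the correct direction so that the inner product bounds the suboptimality from below), and the choice of step size which comes from an AM--GM balance. The proof is otherwise a direct algebraic manipulation once one fixes on tracking $\|x_t - x^*\|^2$ as the Lyapunov potential.
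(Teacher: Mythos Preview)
Your proof is correct and is exactly the standard subgradient-method analysis found in \cite{bubeck2015convex}; the paper itself does not prove this lemma but simply cites that reference. (Note the lemma statement contains a typo---the argument of $f$ should be the averaged iterate $\frac{1}{t}\sum_{i=1}^t x_i$, not $\frac{1}{t}\sum_{i=1}^t f(x_i)$---and your proof correctly addresses the intended claim.)
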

Hence, if we want accuracy $\epsilon$, we need to run the algorithm
for $t = O(R^2L^2/\epsilon^2)$ iterations.
We now argue that both $R$ and $L$ are polynomially
bounded in our case. 
To bound a subgradient of $h$, we begin by bounding each partial
derivative of $\psi$, since for this function it is also a subgradient. 
First of all, it is easy to see that $\partial \psi(\pmb{\beta})/\partial \beta_i \le 2\sqrt{n}$.
Indeed, by equation~\ref{eq:pmle_der} we get:
\begin{align*}
\lp|\frac{\psi(\pmb{\beta})}{\partial \beta_i}\rp| &= \lp|\sum_{k=1}^n ((A_i)_kx)\lp(\tanh ((A_{\pmb{\beta}})_kx) - x_k\rp)\rp|
\le
\sum_{k=1}^n |(A_i)_kx|\lp|\tanh ((A_{\pmb{\beta}})_kx) - x_k\rp|\\
&\le \sum_{k=1}^n |(A_i)_kx| \lp(|\tanh ((A_{\pmb{\beta}})_kx)| +1\rp)
\le 2\sum_{k=1}^n |(A_i)_kx| \\
&\le 2\sum_{k=1}^n \sum_{j=1}^n |(A_i)_{kj}|
\le 2 \sqrt{n}\|A_i\|_F = 2\sqrt{n}
\end{align*}
In the preceding calculation, we used the Cauchy Schwarz
inequality along with the fact that $A_i$ belongs to the orthonormal basis.
Now, we turn our attention to the non-smooth part of $h$.
We use the following general fact about subgradients.
\begin{lemma}[folklore]
    Suppose $f_1,f_2:\R^n \mapsto \R$ are differentiable convex
    functions. Then, $h = \max(f_1,f_2)$ has a subgradient
    of the form
    $$
    g(x) = \lp\{ 
    \begin{array}{ll}
         \nabla f_1(x) \text{ , if } f_1(x) \ge f_2(x)\\
         \nabla f_2(x) \text{ , otherwise}
    \end{array}
    \rp\}
    $$
\end{lemma}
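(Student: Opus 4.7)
The plan is to verify directly from the definition of a subgradient. Recall that $g \in \partial h(x)$ means $h(y) \ge h(x) + g^\top (y-x)$ for all $y \in \R^n$. So given a point $x$, I would case-split on which of $f_1(x), f_2(x)$ is larger, and in each case show that the gradient of the ``winning'' function is a valid subgradient of $h$.

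Concretely, suppose $f_1(x) \ge f_2(x)$, so that $h(x) = f_1(x)$ and the algorithm proposes $g(x) = \nabla f_1(x)$. For an arbitrary $y$, I would use convexity of $f_1$ to write $f_1(y) \ge f_1(x) + \nabla f_1(x)^\top (y-x)$, and then chain
\[
h(y) = \max(f_1(y), f_2(y)) \ge f_1(y) \ge f_1(x) + \nabla f_1(x)^\top (y-x) = h(x) + g(x)^\top (y-x).
\]
The symmetric case when $f_2(x) > f_1(x)$ is identical with the roles of $f_1$ and $f_2$ swapped, and at ties either choice works, so the ``if/otherwise'' split is well-defined.

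There is essentially no obstacle here; the only subtle point to flag is that we do not need $h$ to be differentiable (and indeed it generally is not, precisely at points where $f_1(x)=f_2(x)$), which is why the statement produces a subgradient rather than a gradient. After this short verification I would briefly note the immediate consequence used by Algorithm~\ref{algorithm}: applying the lemma with $f_1 \equiv 0$ and $f_2(\pmb{\beta}) = \|A_{\pmb{\beta}}\|_\infty - M$, and then applying it once more inside $\|\cdot\|_\infty = \max_\ell \sum_v |\cdot|$, yields a subgradient of the regularizer whose coordinates are $5n \cdot \mathrm{sgn}(\beta_i)\cdot \sum_v |(A_i)_{\ell^*, v}|$ for the row $\ell^*$ achieving the maximum, exactly matching the update implemented in the algorithm.
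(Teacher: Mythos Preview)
Your verification is correct and is the standard one-line argument for this folklore fact. The paper itself does not supply a proof of this lemma---it simply states it as folklore and immediately uses it---so there is nothing to compare against; your direct check from the subgradient definition is exactly what one would write if a proof were required.

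One small caveat on your closing remark about the application: the subgradient of $L(\pmb{\beta}) = \sum_v \lvert \sum_j \beta_j (A_j)_{uv}\rvert$ with respect to $\beta_i$ is $\sum_v \mathrm{sgn}\bigl(\sum_j \beta_j (A_j)_{uv}\bigr)\,(A_i)_{uv}$, not $\mathrm{sgn}(\beta_i)\sum_v |(A_i)_{uv}|$. The expression you wrote matches what appears in the paper's Algorithm~1 and surrounding text, but that expression is itself a slip; fortunately it does not affect the argument, since only the magnitude bound $\bigl|\partial_{\beta_i} L\bigr| \le \sum_v |(A_i)_{uv}| \le \sqrt{n}$ is actually used downstream, and that bound holds for the correct subgradient as well.
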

This means that to bound the subgradient of $\lambda\max(0,\|A_{\pmb{\beta}}\|_\infty - M)$
we just need to bound the gradient of each function in the max. The function $0$ obviously has gradient $0$. We have
$$
\|A_{\pmb{\beta}}\|_\infty = \max_{u \in [n]}
\sum_{v=1}^n |(A_{\pmb{\beta}})_{uv}|
$$
Hence, to bound the subgradient of this function,
we focus on a fixed row $u$. 
Set 
$$
L(\pmb{\beta}) = \sum_{v=1}^n |(A_{\pmb{\beta}})_{uv}|
= \sum_{v=1}^n \lp|\sum_{i=1}^k\beta_i(A_i)_{uv}\rp|
$$.
Using the fact that a subgradient for $|x|$ is $\mathrm{sgn}(x)$, we obtain that a subgradient of $L$ w.r.t.
variable $\beta_i$ is 
$\mathrm{sgn}(\beta_i)\sum_{v=1}^n|(A_i)_{uv}|$,
which is clearly bounded in norm by $\sum_{v=1}^n|(A_i)_{uv}|\le \sqrt{n}$. Hence, the subgradient
w.r.t. $\beta_i$ of the function $\lambda\max(0,\|A_{\pmb{\beta}}\|_\infty - M)$
is bounded in absolute value by $\lambda \sqrt{n} = 5n\sqrt{n}$.
This means that the subgradient $g_i$ of $h$ is $O(n\sqrt{n})$
in absolute value.
It follows that $\|g\|_2 \le n\sqrt{k}\sqrt{n}$.
Finally, by choosing $x_1 = 0$ we have 
$$
\|x_1 - \pmb{\beta}^*\| = \|A_{x_1} - A_{\pmb{\beta}^*}\|_F \leq \sqrt{n}\|A_{x_1} - A_{\pmb{\beta}^*}\|_\infty \le M\sqrt{n}
$$
Hence, after $t = O(n^4M^2k/\epsilon^2)$ rounds we achieve error
of at most $\epsilon$.
The only part of the algorithm we haven't analysed is the 
Gram-Schmidt computation. It is well known that
for a subspace of $\R^k$ of dimension $l$, 
the Gram-Schmidt procedure takes $O(ml^2)$.
For $m = k, l= n^2$ we see that this 
is less than the complexity for the optimization part.

The last issue we should address to get the time complexity
of the algorithm is the calculation of the subgradient.
The gradient of $\psi$ amounts to the calculation of $A_{\pmb{\beta}}x$, which takes $O(n^2k)$ time($A_ix$ can be
precomputed for all $i$). 
The calculation of the subgradient for the nonsmooth part is
easy once we determine which row has the maximum absolute sum
for the particular value of $\pmb{\beta}$. This also takes $O(kn^2)$ time, since we have to calculate the sum of the
matrices in each row. 
Once we do that, a precomputation of the row sums of each matrix
can give us the subgradient in $O(1)$ time.
Overall, computation of the gradient takes $O(kn^2)$ time,
which means that our algorithm runs in $O(k^2n^6M^2/\epsilon^2)$ time.
\end{proof}


\section{Dobrushin's Uniqueness Condition}
\label{sec:dobrushin}

Here we present Dobrushin's uniqueness condition in its full generality.
First we define the influence of a node $j$ on a node $i$.
\begin{definition}[Influence in Graphical Models]
	\label{def:influence}
	Let $\pi$ be a probability distribution over some set of variables $V$. Let $B_j$ denote the set of state pairs $(X,Y)$ which differ only in their value at variable $j$. Then the influence of node $j$ on node $i$ is defined as
	\begin{align*}
	I(j,i) = \max_{(X,Y) \in B_j} \norm{\pi_i(. | X^{-i}) - \pi_i(. | Y^{-i})}_{TV}
	\end{align*}
\end{definition}

Now, we are ready to state Dobrushin's condition.
\begin{definition}[Dobrushin's Uniqueness Condition]
	\label{def:dobrushin-influence}
	Consider a distribution $\pi$ defined on a set of variables $V$. Let
	\begin{align*}
	\alpha = \max_{i \in V} \sum_{j \in V} I(j,i)
	\end{align*}
	$\pi$ is said to satisfy Dobrushin's uniqueness condition if $\alpha < 1$.
\end{definition}
Notice that $\| J^*\|_{\infty} < 1$ implies Dobrushin's condition and a proof can found in \cite{chatterjee2005concentration}.
A generalization of this condition was given by \cite{hayes2006simple} who defined the generalized Dobrushin's condition as $\|I\|_2 < 1$, where $I=I(i,j)$ is the influence matrix. This condition, while being weaker, retains most desirable properties of the original Dobrushin's condition.

\section{Technical Proofs} \label{app:technical}
We begin with the proof of Lemma~\ref{lem:derivative-concentration}
and then move to the proof of Lemma~\ref{lem:conc-mainpart}.

\subsection{Proof of Lemma~\ref{lem:derivative-concentration}}
\label{s:conc_aux}
In this section, we prove the following lemma:
\lemmaderivativeconcentration*

We will decompose $\psi_j(x;A)$ to parts that depend on $x_{I_j}$ and parts the depend on the parameters that we condition on, $x_{-I_j}$:
\begin{equation} \label{eq:7}
\psi_j(x;A)=\sum_{i\in I_j} (A_{i,I_j} x_{I_j} + A_{i,-I_j}x_{-I_j})(x_i - \tanh(J^*_{i,I_j} x_{I_j} + J^*_{i,-I_j} x_{-I_j}))
= \sum_{i\in I_j} (A_{i}' x' + b_i')(x_i' - \tanh(J'_{i} x' + h_i')),
\end{equation}
where 
\[
A' = A_{I_j I_j} ;\quad
b' = A_{I_j, -I_j} x_{-I_j}; \quad
J' = J^*_{I_j,I_j}; \quad
h' = J^*_{I_j,-I_j} x_{-I_j}; \quad \text{and }
x' = x_{I_j}.
\]
Notice that $A',b',J',h'$ are fixed conditioned on $x_{-I_j}$ and $x'$ is distributed as the conditional distribution of $x_{I_j}$ conditioned on $x_{-I_j}$. Furthermore, $x'$ is a $(1/2,\gamma)$-Ising model, with interaction matrix $J'$ and external field $h'$, conditioned on $x_{-I_j}$. Hence, the following lemma will imply that the right hand size of \eqref{eq:7} concentrates conditioned on $x_{-I_j}$.
\begin{lemma} \label{lem:derivative-calc}
	Let $x$ be a $(1/2,\gamma)$ Ising model over $\{-1,1\}^m$ with interaction matrix $J$ and external field $h$. Let $A$ be a symmetric real matrix of dimension $m\times m$ with zeros on the diagonal, let $b \in \mathbb{R}^m$ be a vector and let
	\[
	f(x) = \sum_{i \in [m]}(A_ix + b_i) (x - \tanh(J_ix + h)).
	\]
	Then, for any $t > 0$,
	\[
	\Pr[|f(x)| \ge t]
	\le \exp\left(-c\min\left(\frac{t^2}{\|\E Ax+b\|_2^2}, \frac{t^2}{\|A\|_F^2}, \frac{t}{\|A\|_2}\right)\right),
	\]
	where $c>0$ is lower bounded by a constant constant whenever $\|A\|_\infty$, $\|b\|_\infty$, $\|A'\|_\infty$ and $\|b'\|_\infty$ are bounded from above by a constant.
\end{lemma}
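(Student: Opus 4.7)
The plan is to invoke Theorem~\ref{t:newnote} applied to $f$. First I will observe that $\E f(x) = 0$: since $A_{ii} = J_{ii} = 0$, the factor $A_ix+b_i$ is measurable with respect to $x_{-i}$, and the Ising conditional distribution gives $\E[x_i \mid x_{-i}] = \tanh(J_ix+h_i)$, so each summand of $f$ has zero conditional mean. Hence the required tail bound on $|f(x)|$ matches the form of Theorem~\ref{t:newnote} (with the subtraction of $\E f$ being free).

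Next I will construct a pseudo-derivative $\tilde D$ and pseudo-Hessian $\tilde H$ satisfying $\|\E\tilde D\|_2 \lesssim \|\E Ax+b\|_2$, $\max_x\|\tilde H(x)\|_F \lesssim \|A\|_F$, and $\max_x\|\tilde H(x)\|_2 \lesssim \|A\|_2$. Writing $f = \sum_i g_i$ with $g_i(x) = (A_ix+b_i)(x_i-\tanh(J_ix+h_i))$, a direct computation using the identity $u_+v_+-u_-v_- = \tfrac12\bigl((u_+-u_-)(v_++v_-)+(u_++u_-)(v_+-v_-)\bigr)$, together with the $1$-Lipschitzness of $\tanh$, yields
\[
D_k f(x) = (A_kx+b_k) + \sum_{i\ne k}\bigl[A_{ki}\,\bar v_{ki}(x_{-k}) + \bar u_{ki}(x_{-k})\,\Delta_{ki}(x_{-k})\bigr],
\]
where $|\bar v_{ki}|\le 2$, $|\bar u_{ki}|\le\|A\|_\infty+\|b\|_\infty$, and $|\Delta_{ki}|\le|J_{ki}|$. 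The leading term matches the choice $\tilde D(x) := Ax+b$, giving $\|\E\tilde D\|_2 = \|\E Ax+b\|_2$ on the nose. The correction terms are to be packaged into $\tilde H$, whose Frobenius and spectral norms are then controlled by $\|A\|_F$ and $\|A\|_2$ via the identity $\sum_k\sum_{i\ne k}A_{ki}^2 = \|A\|_F^2$ and the Dobrushin-level bound $\|J\|_\infty\le 1/2$ (which confines each row of the $J$-Lipschitz block to $O(1)$, so it contributes only constant multiplicative factors times the $A$-structure).

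The main obstacle will be this last bookkeeping step: the correction $\bar u_{ki}\Delta_{ki}$ is not literally $A$-structured, yet must be absorbed into $\tilde H$ at Frobenius and spectral cost comparable to $\|A\|_F$ and $\|A\|_2$. The resolution is to treat the correction as a bilinear contribution in $(A x+b)$-entries and $J$-entries, and exploit the $\ell_\infty$-bounds on $A$, $b$, $J$, $h$ to factor the $\bar u_{ki}$ and $\Delta_{ki}$ as bounded weights on an $A$-structured (respectively $J$-structured) matrix whose norms inherit the right scaling. A parallel verification that the pseudo-derivative inequality $\|\tilde D(x)\|_2 \ge \|Df(x)\|_2$ holds uniformly---achieved by appending, if necessary, auxiliary coordinates to $\tilde D$ whose squared magnitudes sum to at most $O(\|A\|_F^2)$ via the same bookkeeping---completes the construction. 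Once the pseudo-objects are assembled, Theorem~\ref{t:newnote} yields the bound $\exp(-c\min(t^2/(\|\E Ax+b\|_2^2+\|A\|_F^2),\,t/\|A\|_2))$, equivalent up to absolute constants to the three-way $\min$ in the statement, with constants depending only on $\|A\|_\infty, \|b\|_\infty, \|J\|_\infty, \|h\|_\infty$.
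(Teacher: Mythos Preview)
Your approach is the same as the paper's: compute $Df$, build a pseudo-derivative and pseudo-Hessian, and invoke Theorem~\ref{t:newnote}. The observation that $\E f=0$ is correct, and your product-rule decomposition of $D_kf$ is fine. But the claim that the correction terms can be appended to $\tilde D$ as coordinates ``whose squared magnitudes sum to at most $O(\|A\|_F^2)$'' is wrong, and this is where the real content of the proof lies.

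Consider the correction $\sum_{i\ne k}A_{ki}\bar v_{ki}$. Since $\bar v_{ki}$ equals $x_i-\tanh(J_ix+h_i)$ up to an $O(|J_{ik}|)$ perturbation, this sum is essentially $[A(x-\tanh(Jx+h))]_k$, and summing squares over $k$ gives $\|A(x-\tanh(Jx+h))\|_2^2$. This quantity is \emph{not} $O(\|A\|_F^2)$: for a vector $v$ with $\|v\|_\infty\le 2$ one only has $\|Av\|_2\le 2\sqrt m\,\|A\|_2$, which can be of order $\sqrt m$ even when $\|A\|_F=1$. Appending a constant coordinate of size $\|A\|_F$ therefore cannot enforce the pointwise bound $\|\tilde D(x)\|_2\ge\|Df(x)\|_2$, and there is no way to ``package this into $\tilde H$'' instead, since the pseudo-derivative inequality is what licenses the first application of the log-Sobolev step in Theorem~\ref{t:newnote}.

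The fix, which is the paper's key step, is to include $A(x-\tanh(Jx+h))$ as $m$ additional \emph{non-constant} coordinates of $\tilde D$. These coordinates have zero mean by exactly the conditional-expectation argument you used for $\E f=0$, so they contribute nothing to $\|\E\tilde D\|_2$. Their discrete derivative in direction $j$ is, up to a bounded matrix $W$ with $\|W\|_\infty=O(1)$, the $j$-th column of $A$; this is what yields $\|\tilde H\|_F\le C\|A\|_F$ and $\|\tilde H\|_2\le C\|A\|_2$. The remaining correction $\sum_i\bar u_{ki}\Delta_{ki}$ is actually $O(\|Ax+b\|_2)$ in $\ell_2$ over $k$ (view it as $W'(Ax+b)$ with $|W'_{ki}|\le|J_{ki}|$, so $\|W'\|_\infty\le\|J\|_\infty$), not merely $O(1)$ per coordinate as your bound $|\bar u_{ki}|\le\|A\|_\infty+\|b\|_\infty$ would give; it is then absorbed into the first block of $\tilde D$ by an overall constant.
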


First we derive Lemma~\ref{lem:derivative-concentration} based on Lemma~\ref{lem:derivative-calc}. Substituting $A = A'$, $b = b'$, $J = J'$ and $h = h'$ and $x= x'$, we derive that,
\begin{align*}
\Pr\left[\psi_j(x;A) \ge t ~\middle|~ x_{-I_j} \right]
&\le \exp\left(-c\min\left(\frac{t^2}{\|\E A'x+b'\|_2^2}, \frac{t^2}{\|A'\|_F^2}, \frac{t}{\|A'\|_2}\right)\right)\\
&= \exp\left(-c\min\left(\frac{t^2}{\|\E A_{I_j} x\|_2^2}, \frac{t^2}{\|A_{I_j,I_j}\|_F^2}, \frac{t}{\|A_{I_j,I_j}\|_2}\right)\right)\\
&\le \exp\left(-c\min\left(\frac{t^2}{\|\E A x\|_2^2}, t^2, \frac{t}{\|A\|_2}\right)\right),
\end{align*}
using the fact that $\|A_{I_j,I_j}\|_2 \le \|A\|_2$ and that $\|A_{I_j,I_j}\|_F \le \|A\|_F = 1$ for all $A \in \mathcal{A}$. This concludes the proof of Lemma~\ref{lem:derivative-concentration}.
Lastly, we prove Lemma~\ref{lem:derivative-calc}.
\begin{proof}[Proof of Lemma~\ref{lem:derivative-calc}]
	By abuse of notation, define 
	$\tanh \colon \mathbb{R}^m \to \mathbb{R}^m$ 
	by 
	\[\tanh(y_1,\dots,y_m) = (\tanh(y_1),\dots,\tanh(y_m)).
	\]
	Decompose $f(x)$ as
	\[
	f(x) = g(x)^\top h(x),
	\]
	where $g(x) = Ax+b$ and $h(x) = x - \tanh(Jx+h)$.
	
	We start by bounding $\sum_i (D_i f(x))^2$.
	Decompose
	\begin{align}
	2D_i f(x) 
	&=f(x_{i+}) - f(x_{i-})
	= g(x_{i+})^\top(h(x_{i+}) - h(x_{i-}))
	+ (g(x_{i+}) - g(x_{i-}))^\top h(x_{i-})\notag\\
	&= (g(x_{i+})-g(x))^\top(h(x_{i+}) - h(x_{i-})) + g(x)^\top(h(x_{i+}) - h(x_{i-})) \notag\\
	&+ (g(x_{i+}) - g(x_{i-}))^\top(h(x_{i-})-h(x)) 
	+ (g(x_{i+})-g(x_{i-}))^\top h(x).\notag
	\end{align} 
	By Cauchy Schwartz, for any $a,b,c,d \in \mathbb{R}$, we have that $(a+b+c+d)^2 \le 4 (a^2+b^2+c^2+d^2)$. Hence,
	\begin{align}
	\sum_{i=1}^m(D_i f(x))^2
	&\le \sum_{i=1}^m ((g(x_{i+})-g(x))^\top(h(x_{i+}) - h(x_{i-})))^2
	+  \sum_{i=1}^m (g(x)^\top(h(x_{i+}) - h(x_{i-})))^2 \notag\\
	&+ \sum_{i=1}^m ((g(x_{i+}) - g(x_{i-}))^\top(h(x_{i-})-h(x)))^2
	+ \sum_{i=1}^m ((g(x_{i+})-g(x_{i-}))^\top h(x))^2. \label{eq:55}
	\end{align}
	
	We will bound the terms in the right hand side of \eqref{eq:55} one after the other. 
	
	Term 4:
	\[
	(g(x_{i+})-g(x_{i-}))^\top h(x)
	= (x_{i+}-x_{i-})^\top A (x - \tanh(Jx+h))
	= 2e_i^\top A(x-\tanh(Jx+h)).
	\]
	Summing over all $i$, we get
	\[
	\sum_i ((g(x_{i+})-g(x_{i-}))^\top h(x))^2
	= 4 \|A(x-\tanh(Jx+h))\|_2^2.
	\]
	
	Term 3: using the fact that $\tanh$ is $1$-Lipschitz and Cauchy Schwartz,
	\begin{align*}
	|(g(x_{i+}) - g(x_{i-}))^\top(h(x_{i-})-h(x))|
	&= |2e_i^\top A(x_{i-} - \tanh(Jx_{i-}+h) - x - \tanh(Jx+h))|\\
	&\le 2 \|e_i^\top A\|_2 \|x_{i-} - \tanh(Jx_{i-}+h) - x - \tanh(Jx+h)\|_2\\
	&\le 2 \|e_i^\top A\|_2 (\|x_{i-} -x\|_2 
	+ \|\tanh(Jx_{i-}+h) - \tanh(Jx+h)\|_2)\\
	&\le 2 \|e_i^\top A\|_2 (2 
	+ \|Jx_{i-}+h-Jx-h\|_2)\\
	&\le 2 \|e_i^\top A\|_2 (2 
	+ \|J\|_2 \|x_{i-}-x\|_2) \\
	&\le C \|e_i^\top A\|_2.
	\end{align*}
	Summing over all $i$, we get
	\[
	\sum_i |(g(x_{i+}) - g(x_{i-}))^\top(h(x_{i-})-h(x))|^2
	\le C \sum_i \|e_i^\top A\|_2^2
	= C \|A\|_F^2.
	\]
	
	Term 1:
	\begin{align*}
	|(g(x_{i+})-g(x))^\top(h(x_{i+}) - h(x_{i-}))|
	= |(x_{i_+}-x)^\top A (x_{i+} - \tanh(Jx_{i+}+h) - x_{i-} - \tanh(Jx_{i-}+b))|\\
	\le \|(x_{i_+}-x)^\top A\|_2\|x_{i+} - \tanh(Jx_{i+}+h) - x_{i-} - \tanh(Jx_{i-}+b)\|_2
	\le C \|e_i^\top A\|_2,
	\end{align*}
	using a bound similar to the above.
	Summing over all $i$ we get
	\[
	\sum_i |(g(x_{i+})-g(x))^\top(h(x_{i+}) - h(x_{i-}))|^2 \le C \|A\|_F^2.
	\]
	
	Term 2:
	\[
	\sum_i (g(x)^\top(h(x_{i+}) - h(x_{i-})))^2
	= \|W g(x)\|_2^2,
	\]
	where $W$ is a matrix of size $m\times m$ such that 
	\begin{equation} \label{eq:28}
	W_{ij} = h(x_{i+})_j - h(x_{i-})_j
	= (x_{i+})_j - (x_{i-})_j
	-\tanh(J_j^\top x_{i+} + h) 
	+ \tanh(J_j^\top x_{i-} + h).
	\end{equation}
	Using the Lipschitzness of $\tanh$ and the triangle inequality,
	\[
	|W_{ij}| \le |(x_{i+})_j - (x_{i-})_j| + |J_j^\top (x_{i+}-x_{i-})|
	= 2 (\mathbf{1}(i=j) + |J_{ij}|).
	\]
	We obtain that
	\begin{equation}\label{eq:29}
	\|W\|_2 \le \|W\|_\infty \le 2\|J\|_\infty + 2 \le C.
	\end{equation}
	Hence, $\|Wg(x)\|_2^2 \le \|W\|_2^2 \|g(x)\|_2^2 \le C^2 \|Ax + b\|_2^2$.
	
	To summarize, by \eqref{eq:55} and the calculations below, we obtain that
	\[
	\sum_i (D_i f(x))^2
	\le C(\|A\|_F^2 + \|Ax+b\|_2^2 + \|A(x-\tanh(Jx+h)))\|_2^2.
	\]
	Define the pseudo discrete derivative to be a function of $2m+1$ coordinates, such that for coordinate $i$, $i \in [m]$, we have $\tilde{D}_i(x) = C(A_i^\top x + b_i)$, in coordinate $m+i$ we have $\tilde{D}_{n+i}(x) = C A_i^\top(x - \tanh(Jx+h))$, and in coordinate $2m+1$ we have $\tilde{D}_{2m+1}(x) = C\|A\|_F$.
	
	Next, we like to define a pseudo discrete Hessian. For this purpose, we bound $\sum_{j=1}^m D_j(\xi^\top \tilde D(x))^2$, for any fixed $\xi \in \mathbb{R}^{2m+1}$. Note that using the fact that $\tilde{D}_{2m+1}$ is constant in $x$ and using Cauchy Schwartz, 
	\begin{align}
	\sum_{j=1}^m D_j(\xi^\top \tilde D(x))^2
	&= \sum_{j=1}^m \left(\sum_{i=1}^{2m+1} \xi_i(\tilde{D}_i(x_{j+}) - \tilde{D}_i(x_{j-}))\right)^2\notag\\
	&\le 2 \sum_{j=1}^m \left(\sum_{i=1}^{m} \xi_i(\tilde{D}_i(x_{j+}) - \tilde{D}_i(x_{j-}))\right)^2
	+ 2\sum_{j=1}^m \left(\sum_{i=n+1}^{2m} \xi_i(\tilde{D}_i(x_{j+}) - \tilde{D}_i(x_{j-}))\right)^2.\label{eq:88}
	\end{align}
	We will bound both terms from the right hand size of \eqref{eq:88}. Starting with the first term, for any $i \in [m]$ we have 
	\[
	\tilde{D}_i(x_{j+}) - \tilde{D}_i(x_{j-})
	= 2 A_{ij}.
	\]
	Hence, 
	\[
	\sum_{j=1}^m \left(\sum_{i=1}^{m} \xi_i(\tilde{D}_i(x_{j+}) - \tilde{D}_i(x_{j-}))\right)^2
	= 4 \sum_j \left(\sum_i \xi_i A_{ij}\right)^2
	= 4\|\xi_{1\cdots m}^\top A\|_2^2,
	\]
	where $\xi_{1\cdots m} = (\xi_1,\dots,\xi_m)$.
	For the second term of \eqref{eq:88}, we have:
	\begin{align*}
	&\sum_{j=1}^m \left(\sum_{i=n+1}^{2m} \xi_i(\tilde{D}_i(x_{j+}) - \tilde{D}_i(x_{j-}))\right)^2\\
	&= \left\| \xi_{m+1\cdots2m}^\top A \left(\sum_{i \in [n]} x_{i+} - x_{i-}
	-\tanh(A' x_{i+} + h) 
	+ \tanh(A' x_{i-} + h)\right)\right\|_2^2
	= \|\xi_{m+1\cdots2m}^\top A W\|_2^2,
	\end{align*}
	for the matrix $W$ that we defined in the calculations of the discrete derivative, in \eqref{eq:28}. By \eqref{eq:29} we get that
	\[
	\|\xi_{m+1\cdots2m}^\top AW\|_2^2 \le \|\xi_{m+1\cdots2m}^\top A\|_2^2 \|W\|_2^2 \le C \|\xi_{m+1\cdots2m}^\top A\|_2^2.
	\]
	By \eqref{eq:88} and the bounds on both terms in its right hand side, we derive that
	\[
	\sum_{j\in [m]} D_j(\xi^\top \tilde{D}(x))^2
	\le \|\xi^\top \tilde{H}\|_2^2,
	\]
	where $\tilde{H} = C(A|A|0)^\top$ is the matrix of dimension $(2m+1) \times m$ obtained from stacking two copies of $A$ one on top of each other on top of one row of zeros at the bottom, all multiplied by a sufficiently large constant $C$. Hence, we can define the pseudo Hessian as the constant function $\tilde{H}(x) = \tilde{H}$.
	
	Lastly, we would like to apply Theorem~\ref{t:newnote}, applying it with the pseudo discrete derivative and Hessian defined above. We would just have to calculate:
	\[
	\|\E_x[\tilde{D}(x)]\|_2^2
	= C \sum_{i=1}^m (\E_x[A_i^\top x + b_i])^2
	+ C \sum_{i=1}^m (\E_x[A_i^\top(x-\tanh(Jx+h))])^2
	+ C \|A\|_F^2.
	\]
	The first summand equals $\|\E[Ax+b]\|_2^2$, while the second equals zero, from the same argument as in Claim~\ref{cla:expectation-zero}.
	This implies that $\|\E[\tilde D(x)]\|_2^2 \le C\|\E[Ax+b]\|_2^2 + C\|A\|_F^2$.
	Next, we bound the terms corresponding to the pseudo Hessian: we have that $\|\tilde{H}\|_2 \le C \|A\|_2$, and $\|\tilde{H}\|_F^2 \le C\|A\|_F^2$. Plugging these in Theorem~\ref{t:newnote} concludes the proof.
\end{proof}

\subsection{Proof of Lemma~\ref{lem:conc-mainpart}}\label{s:conc-mainpart}
Now, we move on to the proof of Lemma~\ref{lem:conc-mainpart}.
The function that we wish to show concentration about
is a second degree polynomial, hence Theorem~\ref{t:note} applies.
However, this Theorem requires the matrix to have $0$ in
the diagonal, which is not necessarily the case for
$A^\top A$. Hence, we need to modify the matrix so that
it is zero-diagonal, obtain the concentration bound
for the modified matrix and then translate the result
in terms of the original matrix. This is done in the
following proof.
\begin{proof}[Proof of Lemma~\ref{lem:conc-mainpart}]
  Denote $p(x) = \|Ax\|_2^2 = x^\top A^\top Ax$. 
  Let $E$ be obtained from $A^\top A$ by zeroing all elements of the diagonal and denote $\tilde p(x) = x^\top Ex$. Note that $\tilde p(x) - p(x)$ is a constant as a function of $x$,
  since $x_i^2 = 1$ for all $i$. This means that it suffices to bound the deviation of $\tilde{p}(x)$. 
  By Lemma~\ref{lem:ising-subsample}, conditioning on $x_{-I_j}$ yields an Ising model with Dobrushin constant $1/2$.
  Hence, we can apply Theorem~\ref{t:note} and get that for any $t \ge 0$,
	\begin{equation}\label{eq:concE}
	\Pr\lp[\lp|\tilde p(x) - \E\lp[\tilde p(x) ~\middle|~ x_{-I_j}\rp]\rp| > t~\middle|~ x_{-I_j}\rp]
	\le \exp\lp(-c \min\lp(\frac{t^2}{\|E\|_F^2+\|\E [Ex | x_{-I_j}]\|_2^2}, \frac{t}{\|E\|_2}\rp)\rp).
	\end{equation}
	Now, we show how $E$ can be replaced with $A^\top A$ in \eqref{eq:concE}.
	First:
	\[
	\lp\|\E\lp[ Ex ~\middle|~ x_{-I_j}\rp]\rp\|_2 \le  \lp\|\E\lp[ A^\top Ax ~\middle|~ x_{-I_j}\rp]\rp\|_2
	+ \lp\|\E\lp[ (E-A^\top A)x ~\middle|~ x_{-I_j}\rp]\rp\|_2,
	\]
	which, using the inequality $(a+b)^2 \le 2a^2 + 2b^2$, implies that
	\begin{align*}
	\|E\|_F^2+\lp\|\E\lp[ Ex ~\middle|~ x_{-I_j}\rp]\rp\|_2^2
	&\le 2 \lp\|E\rp\|_F^2
	+ 2\lp\|\E\lp[ A^\top Ax ~\middle|~ x_{-I_j}\rp]\rp\|_2
	+ 2\lp\|\E\lp[ (E-A^\top A)x ~\middle|~ x_{-I_j}\rp]\rp\|_2^2\\
	&= 2 \lp\|A^\top A \rp\|_F^2 + 2\lp\|\E\lp[ A^\top Ax ~\middle|~ x_{-I_j}\rp]\rp\|_2^2.
	\end{align*}
	In the last equality, we used the fact that $\lp\|\E\lp[ (E-A^\top A)x ~\middle|~ x_{-I_j}\rp]\rp\|_2^2$ is just the sum of the squares
	of the diagonal entries of $A^\top A$, which means
	that together with $\|E\|_F^2$ they add up to 
	$\|A\|_F^2$. 
	Next, notice that $\|E\|_2 \le \|A^\top A\|_2$. To prove this, we note that for all $x \in \mathbb{R}^n$, 
	$$
	x^\top E x = x^\top A^\top A x - \sum_{i \in [n]} (A^\top A)_{ii} \le x^\top A^\top A x.
	$$
	Putting all of this together, we obtain that the right hand side of ~\ref{eq:concE} is bounded by
	\[
	\exp\lp(-c' \min\lp(\frac{t^2}{\lp\|A^\top A \rp\|_F^2 + \lp\|\E\lp[ A^\top Ax ~\middle|~ x_{-I_j}\rp]\rp\|_2^2}, \frac{t}{\|A^\top A\|_2}\rp)\rp).
	\]
	Finally, we want to make this bound depend on $A$ 
	rather than $A^\top A$.
	First,
	\[
	\|A^\top A\|_F^2
	\le \|A\|_F^2 \|A\|_2^2
	\]
	using the well known inequality $\|AB\|_F \le \|A\|_2 \|B\|_F$.
	
	Next, we have:
	\begin{equation}
	\lp\|\E\lp[ A^\top Ax ~\middle|~ x_{-I_j}\rp]\rp\|_2^2
	= \lp\|A^\top \E\lp[ Ax ~\middle|~ x_{-I_j}\rp]\rp\|_2^2
	\le \|A\|_2^2 \lp\|\E\lp[ Ax ~\middle|~ x_{-I_j}\rp]\rp\|_2^2
	\end{equation}
	Lastly,
	\[ 
	\|A^\top A\|_2
	= \|A\|_2^2.
	\]
	This concludes the proof.
\end{proof}

\printbibliography

\end{document}